\font\sc=rsfs10 at 12pt
\renewcommand{\a}{\alpha}
\newcommand{\am}{\pmb{\alpha}}
\newcommand{\be}{\beta}
\newcommand{\g}{\gamma}
\newcommand{\G}{\Gamma}
\newcommand{\de}{\delta}
\newcommand{\D}{\Delta}
\newcommand{\e}{\epsilon}
\newcommand{\ve}{\varepsilon}
\newcommand{\z}{\zeta}
\newcommand{\y}{\eta}
\newcommand{\io}{\iota}
\newcommand{\ka}{\kappa}
\newcommand{\vk}{\varkappa}
\newcommand{\la}{\lambda}
\newcommand{\m}{\mu}
\newcommand{\n}{\nu}
\newcommand{\x}{\xi}
\newcommand{\ro}{\rho}
\newcommand{\s}{\sigma}
\newcommand{\si}{\sigma}
\newcommand{\vs}{\varsigma}
\newcommand{\ta}{\tau}
\newcommand{\f}{\phi}
\newcommand{\F}{\Phi}
\newcommand{\vp}{\varpi}
\newcommand{\pk}{\pmb{\kappa}}
\newcommand{\om}{\omega}
\newcommand{\Om}{\Omega}
\newcommand{\U}{\Upsilon}
\newcommand{\R}{{\mathbb R}}
\newcommand{\Tt}{\mbox{\texttt{T}}}
\newcommand{\Ct} {\mbox{\texttt{C}}} 
 \newcommand{\Et}{\mbox{\texttt{E}}}
 \newcommand{\fT}{\tilde{f}}
 \newcommand{\Bt}{\mbox{\texttt{B}}} 
\newcommand{\ab}{{\mathbf a}}
\newcommand{\bb}{{\mathbf b}}
\newcommand{\cb}{{\mathbf c}}
\newcommand{\fb}{{\mathbf f}}
\newcommand{\hb}{{\mathbf h}}
\newcommand{\mb}{{\mathbf m}}
\newcommand{\nb}{{\mathbf n}}
\newcommand{\rb}{{\mathbf r}}
\newcommand{\tb}{{\mathbf t}}
\newcommand{\xb}{{\mathbf x}}
\newcommand{\yb}{{\mathbf y}}
\newcommand{\Bb}{{\mathbf B}}
\newcommand{\Cb}{{\mathbf C}}
\newcommand{\Eb}{{\mathbf E}}
\newcommand{\Gb}{{\mathbf G}}
\newcommand{\Ib}{{\mathbf I}}
\newcommand{\Kb}{{\mathbf K}}
\newcommand{\Nb}{{\mathbf N}}
\newcommand{\Ob}{{\mathbf O}}
\newcommand{\Qb}{{\mathbf Q}}
\newcommand{\Sbb}{{\mathbf S}}
\newcommand{\Ac}{{\mathcal A}} \newcommand{\Bc}{{\mathcal B}} 
 \newcommand{\Dc}{{\mathcal D}} 
 \newcommand{\Hc}{{\mathcal H}} 
\newcommand{\Ic}{{\mathcal I}} \newcommand{\Jc}{{\mathcal J}} 
 \newcommand{\Lc}{{\mathcal L}} 
\renewcommand{\Mc}{{\mathcal M}} 
 \newcommand{\Pc}{{\mathcal P}} 
 \newcommand{\Rc}{{\mathcal R}} 
\newcommand{\Sc}{{\mathcal S}}  
\newcommand{\Tc}{{\mathcal T}}  
 \newcommand{\Yc}{{\mathcal Y}}% NEW \newcommand{\Wc}{{\mathcal %W}} % 
 \newcommand{\Qc}{{\mathcal Q}}
\newcommand{\Fs}{\sc\mbox{F}\hspace{1.0pt}} 
\newcommand{\Gs}{\sc\mbox{G}\hspace{1.0pt}}
  \newcommand{\dist}{{\rm dist}\,}
\newcommand{\grad}{{\rm grad}\,}
\newcommand{\supp}{\hbox{{\rm supp}}\,}
 \newcommand{\jt}{\mathrm{j}}
\newcommand{\diam}{\operatorname{diam\,}}
\newcommand{\meas}{\operatorname{meas\,}}
\newcommand{\dn}{d_0}
\newtheorem{thm}{Theorem}[section]
\newtheorem{cor}[thm]{Corollary}
\newtheorem{lem}[thm]{Lemma}
\theoremstyle{definition}
\newtheorem{defin}[thm]{Definition}
\newtheorem{property}[thm]{Property}
\numberwithin{equation}{section}
\begin{document}

\author[]{Grigori Rozenblum}
\address{Chalmers University of Technology, Gothenburg, Sweden}
\email{grigori@chalmers.se}

\author[]{Nikolai A.  Shirokov}
\address{St. Petersburg State University, Univ.Naberezhnaya, 7/9, St.Petersburg 199034; Nation. Research Univ. Higher School of Economics, 
Kantemirovskaya Str.
3a, St. Petersburg, 194100;  Russia}
\email{nikolai.shirokov@gmail.com}

% Give an abbreviation of the title for the running page headers.
%\titlerunning{H{\"o}lder approximation}

% Here you can enter the full article title. 
\title[Higher Order H{\"o}lder approximation]{Higher Order H{\"o}lder approximation by 
solutions of  second order elliptic equations} 
\begin{abstract}For a given second order elliptic operation $\Lc$ 
in a domain 
$\Om\subset\R^\Nb$, $\Nb\ge3$, and a compact set 
$\Kb\subset\Om$, order $\Nb$-$2$-Ahlfors-David 
regular, we define the space $\Hc^{\rb+\om}_{\Lc}(\Kb)$ of 
continuous functions 
$f(x),\, x\in\Kb$, admitting, for any $\de>0$, a local 
approximation in the $\de
$-neighborhood of any point $x\in\Kb$, with 
$\de^{\rb}\om(\de)$-error estimate, 
by solutions of the equation $\Lc u=0$. For such functions, we 
prove the 
existence of a global approximation $v_\de$ on $\Kb$ with the 
same order  of 
error estimate, by a solution of the same equation in  a 
$\de$-neighborhood of 
$\Kb$. A number of properties of these functions $v_\de$ and 
their derivatives 
are established.
\end{abstract}
\maketitle
 \section{Introduction}\label{Intro}
  \subsection{The approximation problem}\label{1.1}  Approximating
 'bad' functions by 'good' ones is one of classical topics in 
 Analysis. The 
 qualitative direction
 has started with the Weierstrass Theorem on the possibility of 
 polynomial 
 approximation
 of continuous functions. An important further development here concerns 
 approximating  
 continuous functions by solutions of differential equations. A 
 fundamental result 
 for rather general differential equations (possessing a kind of 
 unique 
 continuation property) was obtained by F.E. Browder, \cite{BR1}, 
 \cite{BR2}.

 The studies in the quantitative direction began later.
  Generally speaking, quantitative approximation results can be 
  expected to have 
  the following common structure:
 \begin{enumerate}
   \item A class  $\Fs$ of functions to be approximated is 
       described; 
   \item A class $\Gs$ of functions used for approximation is 
       proposed;
   \item The result: a quantitative relation between the rate of 
       approximation 
       and the properties of the 
   approximating function.
 \end{enumerate}
 For example, the order of the error in the approximation of a 
 continuous function 
 by polynomials
  of a given degree is determined by the smoothness of this 
  function, understood in 
  a proper sense.
  
  When considering approximation by solutions of elliptic 
  equations, it is 
  reasonable to consider as $\Fs$, a class  of functions defined 
  on a nowhere dense 
  set $\Kb$. In fact, if, on the opposite, $\Kb$ possesses 
  interior points, it is 
  only solutions of the  equation that can be approximated by 
  solutions.  
   So, we are interested in approximating a given continuous 
   function $f$ defined 
  on a  nowhere dense compact
set $\Kb\subset \R^\Nb$ by solutions of a second order elliptic 
equation. When the 
approximating functions are harmonic,
and the set $\Kb$ is nice, say, a Lipschitz surface, there are many 
results in this 
direction, see, e.g. \cite{Bliedtner}, \cite{AlSh1}, 
\cite{Andrievskii}, \cite{Gardiner Book}, \cite{Gardiner}, 
\cite{Gardiner Goldstein}, \cite{Hausmann}, \cite{Khav} and many 
more.

 When the conditions on $\Kb$ are less restrictive, one can cite 
 \cite{AlSh2}, \cite{Pavlov}, \cite{RZ25}. Here, 
 one needs to decide,  which terms should be used 
to describe properties of the function $f$ in order  to determine 
the rate of 
 approximation.
  If we only know that the given function $f$ is continuous on a compact set 
  $\Kb$, then  the 
  quality of this continuity, and consequently,
   the quality of approximation, can be described by the modulus 
   of continuity of 
   $f$. In this direction,  in the paper \cite{RZ25}, the authors 
   considered the 
   problem on  approximating  a continuous function $f$ on 
 $\Kb\subset\R^{\Nb}$, possessing the continuity
  modulus $\om(\de)$, by solutions of a second order 
 elliptic equation $\Lc u=0$ (\emph{$\Lc$-harmonic functions}).
  It was established there that if the set 
  $\Kb$ is 
  Ahlfors-David $\Nb$-$2$-regular (which means, almost exactly 
  speaking, that
  it has one and the same Hausdorff dimension $\Nb$-$2$ in any 
  neighborhood of any 
  of its points), then the function $f$ can be,
  for any $\de>0$, approximated in $C(\Kb)$ by a function $v_\de$, 
  so that 
  $|f(x)-v_{\de}(x)|\le c \om(\de)$ for all $x\in\Kb$,
  the function $v_{\de}$ is $\Lc$-harmonic in  a 
  $\de$-neighbourhood  $\Kb_{\de}$ 
  of the set $\Kb$, moreover,
   the quality of this function $v_\de$ is controlled by $\de$, 
   namely, $|\nabla 
   v_\de(x)|\le C \frac{\om(\de)}{\de}$ in $\Kb_{\de}$. This matches the 
   general 
   principle: the smaller $\de$,  the better is the aproximation, i.e.,  the smaller, 
   is the 
   approximation error, but the worse is the approximating
    function $v_\de$: it is $\Lc$-harmonic on a smaller set, and 
    its gradient may 
    grow with $\de$ decreasing. Moreover, a converse result was 
    established: if a 
    continuous function $f$  on $\Kb$ can be approximated in the 
    above sense, with 
    some function $\om(\de)$, by solutions of a second order 
    elliptic differential 
    equation, then $f$ possesses the continuity modulus majorated 
    by $\om(\de)$.
    
    It is natural to expect  that if we wish to have a better 
    approximation (the one better than with 
    $O(\om(\de))$ error), with the same quality of
    the approximating function, we should suppose some better 
    properties of the given
    function $f$. If the set $\Kb$ were a smooth
    surface (of codimension 2), such 'better' properties would 
    naturally involve a 
    higher classical  smoothness of $f$. However, if we only know 
    that the set 
    $\Kb$ is Ahlfors-David $\Nb$-$2$-regular, some other terms 
    should be used.

    In the literature, there exist methods of defining spaces of 
    'nice' functions on 
    arbitrary compacts. One of them is based upon
    describing
   classes of functions via their local approximations by 
    polynomials or other
     sufficiently regular functions, see \cite{Brudny}, 
     \cite{Brudny2}, 
     \cite{Shvartsman}, and many sources afterwards.
  
    So, the expected approximation results should sound like 'if a 
    function admits 
    local approximation of a certain kind, it admits
     the corresponding quality of global approximation' by 
     $\Lc$-harmonic 
     functions. 
     
     This is, in fact, the contents of the present paper. Namely, 
     in our main 
     result,
   if $f$ is a continuous function on $\Kb$,  which can, for any 
   $\de>0$, be 
   locally, in a $\de$-neighborhood of any point $x\in\Kb$, 
   approximated by a function $\F_{x,\de}(y)$ which is a solution 
   of the 
   second-order elliptic  equation 
$\Lc(y,\partial_y)u(y)=0$ in a $2\de$-neighbourhood of $x$, with 
error 
$O(\de^\rb\om(\de)), $ $\rb\ge1$  (with some natural compatibility
 conditions concerning the functions $\F_{x,\de}$ for different values of
 $\de$ and different close-lying points
 $x$),
    then $f$ can be approximated on the whole $\Kb$, with error of 
    the same order, by 
    a solution $v_\de$ of the same equation in the  
    $\de$-neighborhood of $\Kb$.
Note that the above compatibility conditions, mentioned in 
parentheses, are 
unavoidable: they are proved to be necessary 
for the existence of the  global approximation.

When comparing these results with our previous paper \cite{RZ25}, 
where  we 
established this  kind of properties for $\rb=0$,
 one can notice that an additional restricting  condition appears: 
 the locally approximating
functions $\F_{x,\de}(y)$ are required here to be solutions of the  
elliptic 
equation, while in 
\cite{RZ25} no such restriction has been imposed.
This restriction is, unfortunately, unavoidable.  An example we 
present in the 
paper  demonstrates a function which admits a nice polynomial 
local approximation but 
does not admit a global  approximation by harmonic functions.
 This effect is caused by  a visible wildness of the set $\Kb$ in 
 our example: it is easy to show 
 that for a nicer $\Kb$, e.g., for a Lipschitz
 surface of codimension 2, such counter-examples are impossible 
 and a local 
 approximation by smooth functions is sufficient (and, of course, 
 necessary)
  for existence of a global approximation by    $\Lc$-harmonic 
  functions.

 The elliptic  differential operation $\Lc(x,\partial_x)$ is 
 supposed to have 
 coefficients of certain finite smoothness,
  $C^m(\Om)$. The main approximation result, Theorem 1.2, is proved for $m=3$. Under 
  additional  
  smoothness conditions, the main result can be somewhat
  strengthened: not only the approximating functions $v_\de$ 
  converge on $\Kb$ to 
  the initial function $f$, but their derivatives $\partial^\a 
  v_{\de}$ (up to 
  some order, depending on the smoothness of coefficients of 
  $\Lc$) converge on 
  $\Kb$ to some functions $f_{(\a)}$ which can be understood as 
  generalized 
  derivatives of the given function $f$. The greater $m$, to the higher order these surrogate 
  derivatives of $f$ can be defined, see Theorem 1.3.

 \subsection{The main results.1}\label{Sect.Main. Result1}
 We present here the exact formulation of our main approximation 
 result.  It is the 
 following.
  Let $\om(t),$ $t>0$, be a continuity modulus satisfying the 
  condition
  \begin{equation*}
 % \begin{equation}\label{Cont.mod}
    \int_0^{\ta}\frac{\om(t)}{t}dt+\ta\int_{\ta}^\infty\frac{\om(t)}{t^2}dt \le c\om(\ta), \, 0<\ta<\infty.  \end{equation*}
Let, further, $\Kb$ be a compact set in $\R^\Nb$, 
$\Nb$-$2$-Ahlfors-David regular (see, 
e.g., \cite{David}).
Let $\Om\supset\Kb$ be a bounded open connected set, where a 
formally self-adjoint 
second-order elliptic operator 
\begin{equation*}
\Lc u (x)=-\sum_{\jt,\jt'} \partial_\jt(a_{\jt\jt'}(x)\partial_{\jt'}u(x))\equiv 
-\nabla\cdot(\ab(x)\nabla u(x)),
\end{equation*}
with $C^m$-coefficients $a_{\jt\jt'}$, $m\ge 3$,
is defined.

With the continuity modulus $\om$ fixed, for an integer $\rb\ge0$, 
the local 
$\Lc$-H{\"o}lder class $\Hc_{\Lc}^{\rb+\om}(\Kb)$ is defined in 
the 
following way.
\begin{defin}\label{defin.class}
  The continuous function $f(x)$, $x\in\Kb,$ is said to belong to 
$\Hc_{\Lc}^{\rb+\om}(\Kb)$, if there exist constants 
$\cb_1=\cb_1(f)$, 
$\cb_2=\cb_2(f)$, such that for any $x\in\Kb$ and any $\de,\, 
0<\de\le 
2\diam(\Kb)$, there exists 
a function $\F_{x,\de}(y)$ defined in the ball $B_\de(x)$  such 
that
\begin{equation*}%\label{LF}
  \Lc_y\F_{x,\de}(y)=0, \, y\in B_\de(x),
\end{equation*}
\begin{equation*}%\label{F1}
  |f(y)-\F_{x,\de}(y)|\le \cb_1\de^\rb\om(\de), \, y\in 
  B_\de(x)\cap\Kb.
\end{equation*}
For close-lying points $x_1$, $x_2$, the approximating functions 
 should be consistent in the following sense: for some constants 
 $\g_1,\g_2,$ 
 $\frac18\le \g_1\le 1\le \g_2\le 8,$
 if $\g_1\de_1\le \de_2\le \g_2\de_1$,  given any points 
 $x_1,x_2\in \Kb$, such that the 
 balls $B_{\de_1}(x_1), B_{\de_2}(x_2)$ are not disjoint, the 
 inequality
\begin{equation}\label{F2}
 |\F_{x_1,\de_1}(y)-\F_{x_2,\de_2}(y)|\le 
 \cb_2\de_1^\rb\om(\de_1). 
\end{equation}
must hold for all $y\in B_{\de_1}(x_1)\cap B_{\de_2}(x_2)$,
\end{defin}

We recall the definition of Ahlfors-David regularity. The compact 
set $\Kb$ is called 'AD regular' of dimension $\vk$ if for some 
constants
$c', c'',$ $0<c'<c''<\infty$, for any point $x\in \Kb$ and any 
$r\le\diam(\Kb)$,
$c'r^{\vk}\le\m_\vk(B_r(x))\le c''r^{\vk},$ where $\m_{\vk}$ is the Hausdorff measure of dimension $\vk.$

By $\Kb_\de$ we denote the $\de$-neighborhood of $\Kb.$
Our first main result is the following.
\begin{thm}\label{MainTheorem}
 Let $\Kb$ be $\Nb$-$2$-AD regular. Suppose that the coefficients of the operator $\Lc$ belong to $C^3$. Then   function 
 $f$ defined on $\Kb$ 
 belongs to the class $\Hc_{\Lc}^{\rb+\om}(\Kb)$ if 
and only if
for any $\de< \frac14\diam(\Kb)$, there exists an approximating 
function
 $v_\de(x), \, x\in\Kb_\de,$ such that, with some constant
$\mathbbm{c}>0$,
\begin{gather}\label{appr.function}
  \Lc_y v_\de(y)=0, \, y\in \Kb_{\de};\\\nonumber
  |v_\de(x)-f(x)|\le \mathbbm{c} \de^\rb\om(\de), \, x\in 
  \Kb;\\\nonumber
  |v_\de(y)-v_{\de/2}(y)|\le \mathbbm{c}\de^\rb\om(\de), \, 
  y\in\Kb_{\frac{\de}{2}}.
\end{gather}
\end{thm}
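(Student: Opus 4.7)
The direction global$\Rightarrow$local is straightforward: take $\F_{x,\de}:=v_\de|_{B_\de(x)}$. The equation $\Lc\F_{x,\de}=0$ holds since $B_\de(x)\subset\Kb_\de$, and the pointwise bound on $\Kb$ is inherited from \eqref{appr.function}. For \eqref{F2}, given comparable scales $\g_1\de_1\le\de_2\le\g_2\de_1$ with $\frac18\le\g_1\le1\le\g_2\le 8$, one sums the telescoping estimate in the third line of \eqref{appr.function} across the uniformly bounded number of dyadic scales separating $\de_1$ and $\de_2$.

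The substantive direction is local$\Rightarrow$global, for which I propose a Whitney-style gluing followed by an elliptic correction. For each $\de>0$ I use $(\Nb$-$2)$-AD regularity to pick a maximal $\de$-separated net $\{x_i\}\subset\Kb$, so that $\{B_\de(x_i)\}$ covers $\Kb_\de$ with uniformly bounded overlap, and I take a subordinate smooth partition of unity $\{\chi_i\}$ with $|\nabla^k\chi_i|\le c\de^{-k}$ for $k\le 2$. Set
\[
  u_\de(y):=\sum_i\chi_i(y)\,\F_{x_i,\de}(y),\qquad y\in\Kb_\de.
\]
Since $\sum_i\chi_i\equiv 1$, the local approximation hypothesis and \eqref{F2} give $|u_\de-f|\le c\de^\rb\om(\de)$ on $\Kb$.

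To show that $u_\de$ is nearly $\Lc$-harmonic, I use $\Lc\F_{x_i,\de}=0$ and the cancellation $\sum_i[\Lc,\chi_i]=[\Lc,1]=0$ to write, for any reference index $i_0=i_0(y)$ with $y\in\supp\chi_{i_0}$,
\[
  \Lc u_\de(y)=\sum_i[\Lc,\chi_i](y)\bigl(\F_{x_i,\de}(y)-\F_{x_{i_0},\de}(y)\bigr).
\]
Each commutator $[\Lc,\chi_i]$ is first-order with coefficients of size $O(\de^{-1})$ and $O(\de^{-2})$ (here $C^3$-smoothness of the coefficients of $\Lc$ is used). The differences $\F_{x_i,\de}-\F_{x_{i_0},\de}$ are $\Lc$-harmonic on the overlap of the balls, of sup-norm $O(\de^\rb\om(\de))$ by \eqref{F2}, so interior gradient estimates yield $|\Lc u_\de|\le C\de^{\rb-2}\om(\de)$ on $\Kb_\de$, with support contained in $\Kb_\de$. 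I then define $w_\de$ as the Newtonian-type correction $\Lc w_\de=\Lc u_\de$, constructed from the fundamental solution $G(x,y)\lesssim|x-y|^{2-\Nb}$ (or equivalently from the Dirichlet problem in a fixed enclosing region), and set $v_\de:=u_\de-w_\de$; this solves $\Lc v_\de=0$ in $\Kb_\de$ and preserves the approximation of $f$ provided $w_\de$ is small.

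The main obstacle, and the crux of the argument, is the estimate $\|w_\de\|_\infty\le c\de^\rb\om(\de)$. It reduces to the integral bound
\[
  \int_{\Kb_\de}|x-y|^{2-\Nb}\,dy\le c\de^2,\qquad x\in\R^\Nb,
\]
which uses precisely that $\Kb$ is $(\Nb$-$2)$-AD regular, so that $\Kb_\de$ is a tubular neighbourhood of the correct codimension, combined with a dyadic decomposition in the distance to $\Kb$ together with the Dini-type hypothesis on $\om$ to absorb any logarithmic losses. Finally, the telescoping estimate $|v_\de-v_{\de/2}|\le\mathbbm{c}\de^\rb\om(\de)$ on $\Kb_{\de/2}$ follows by a direct pointwise comparison of $u_\de$ and $u_{\de/2}$ on $\Kb_{\de/2}$ via \eqref{F2} at the comparable scales $\de$ and $\de/2$, together with the individual sup-norm bounds already obtained for $w_\de$ and $w_{\de/2}$.
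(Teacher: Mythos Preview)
Your argument has a real gap at the estimate $\|w_\de\|_\infty\le c\de^{\rb}\om(\de)$. With your construction, $\Lc u_\de$ is supported in $\Kb_\de$ with the \emph{uniform} bound $|\Lc u_\de|\le C\de^{\rb-2}\om(\de)$, so
\[
|w_\de(x)|\le C\de^{\rb-2}\om(\de)\int_{\Kb_\de}|x-y|^{2-\Nb}\,dy.
\]
For $x\in\Kb$, decompose into dyadic shells $|x-y|\sim 2^j\de$. Since $\Kb$ is $(\Nb{-}2)$-AD regular, the volume of $\Kb_\de\cap\{|x-y|\sim 2^j\de\}$ is of order $(2^j\de)^{\Nb-2}\de^2$, and each shell contributes $(2^j\de)^{2-\Nb}\cdot(2^j\de)^{\Nb-2}\de^2=\de^2$. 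Summing over $j$ up to $\log_2(\diam\Kb/\de)$ gives $\int_{\Kb_\de}|x-y|^{2-\Nb}dy\sim\de^2\log(1/\de)$, hence only $|w_\de|\lesssim\de^{\rb}\om(\de)\log(1/\de)$. The Dini hypothesis on $\om$ cannot help here: $\om$ enters your bound as the constant $\om(\de)$, not as a function of $d(y)$, so there is nothing for the Dini integral to act on.

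This logarithmic divergence is exactly what the paper's construction is designed to kill, and it does so by a mechanism absent from your proposal. The paper introduces compensatory $\Lc$-harmonic terms $F_\n$ (see \eqref{34a}, \eqref{35}): for each piece $W_\n$ of $\Kb'_\de$ it places a ball $B_{2\de}(V_\n)$ at a point $V_\n$ separated from $\Kb$ by $\sim\de$, with weight $\ro_\n$ chosen so that $\int_{W_\n}\Lc f_0=\ro_\n\,\meas B_{2\de}$. This zero-moment matching converts each local contribution $\int_{W_\n}G^{\circ}(x,y)\Lc f_0(y)\,dy-F_\n(x)$ into a dipole-type expression: after subtracting point values of $G^{\circ}$ (see \eqref{41}--\eqref{45}), the bound improves from $|x-y|^{2-\Nb}$ to $\de\,|x-y|^{1-\Nb}$, and the dyadic sum becomes geometric, $\sum_l 2^{-l}$, with no logarithm. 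Without some analogous cancellation device, your partition-of-unity plus Newtonian-correction scheme will not reach $\de^{\rb}\om(\de)$.
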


\subsection{The ideas of the proof} The proof of the main 
theorem is fairly 
technical, therefore we consider it reasonable to explain here its 
structure.

 Given 
a function $f\in \Hc_{\Lc}^{\rb+\om}(\Kb)$ on the compact set 
$\Kb$, we construct its 
special extension $f_0$ to a fixed neighborhood $\Om$ of $\Kb$ 
(the particular form of this
 neighbourhood is not essential, and we suppose further on that it is the 
 unit ball containing the set $\Kb$ which is contained in the concentric ball with radius $\frac13$). For this function 
$f_0$,  using the Green function $ G^{\circ}(x,y)$  of the operator $\Lc$ 
in $\Om,$ the 
integral representation is established:
\begin{equation}\label{intro.repres}
  f_0(x)=\int_{\Om}\Lc f_0(y)G^{\circ}(x,y)dy, \, x\in \Om.
\end{equation}
Although this representation looks quite usual if $f_0$ is 
sufficiently smooth, 
this is not the case for \emph{our} function $f_0$  for which the 
derivatives may 
behave badly when approaching $\Kb$. Therefore, to justify 
\eqref{intro.repres}, we need a detailed control of the behavior 
of  $\Lc f_0(x)$ 
and of  derivatives of $f_0(x)$    near $\Kb$. Obtaining  this 
control requires 
complicated estimates of the Green function $G_{x,r}(x,y)$ for $\Lc$ 
in balls 
$B_r(x)$ centered at $x$, together with their derivatives, up to 
the third 
order, in the variables $x,y$, as well as in the additional variable $\vs$ on which the operator $\Lc$ depends as a parameter. 
Under the condition 
of a sufficient smoothness of coefficients of the operation $\Lc,$ 
we derive some of these estimates directly,   using Schauder-type 
 approach, and borrow the other ones 
 from the results by Ju.~Krasovskii 
 \cite{Kras.Sing}, and M.~Gr{\"u}ter--K.-O.~Widman \cite{Widman}. 
Finally, having established    the representation 
\eqref{intro.repres}, we define 
the approximation function $v_\de(x)$, looked for, by the integral
\begin{equation*}
  v_{\delta}(x)=\int_{\Om\setminus \Kb_{\de}}\Lc f_0(y)G^{\circ}(x,y)dy,
\end{equation*}
with addition of a collection of  several  compensatory $\Lc$-harmonic terms of a simpler nature, see \eqref{35}.
 The fact that $v_{\de}$ is 
$\Lc$-harmonic in $\Kb_{\de}$ is obvious, it follows from the 
definition of the 
Green function $G^{\circ}(x,y)$, while the estimates producing  the 
quality   of the  
approximation follow from   our estimates for the function 
$f_0(x)$ and its 
derivatives.

\subsection{The main result. 2}\label{Sect.Main. Result2}
The second theorem describes the properties of the approximating 
functions $v_\de:$ 
their derivatives, up to a prescribed order 
$k\le\rb+1$ can be controlled. Moreover, we 
can define in a consistent way the generalized derivatives $f_{(\a)}$ of the 
initial function 
$f$ on $\Kb$,
so that the derivatives of $v_{\de}$ approximate these derivatives 
of $f$. This 
property requires a certain additional smoothness of coefficients 
of the operator 
$\Lc$.  
\begin{thm}\label{Thm.quality} Suppose that $\rb\ge 1$ and the 
coefficients $\ab(x)=(a_{\jt\jt'}(x))_{\jt,\jt'\le \Nb}$ belong to 
$C^{k_0+3}(\Om)$ for a certain $k_0\le\rb$.  Let the 
function $f$, defined  on 
the compact set $\Kb$, belong to the class $H_{\Lc}^{\rb+\om}(\Kb)$ and 
$v_\de$ be its 
approximation, as in \eqref{appr.function}. Then  derivatives of 
$v_\de$ satisfy
\begin{equation}\label{appr.deriv}
  \|\nabla^{k_0+1}v_\de\|_{\Kb_{\de/2}}\le c\frac{\om(\de)}{\de}.
\end{equation}
moreover, surrogate derivatives $f_{(\a)}(x)$ can be defined, so that
\begin{equation}\label{appr.deriv.2}
  |f_{(\a)}(x)-\partial^\a v_{\de}(x)|\le C 
  \de^{\rb-|\a|}\om(\de),\, x\in\Kb,  \, 
  1\le |\a|\le k_0.
\end{equation}
\end{thm}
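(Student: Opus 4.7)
The plan is to bypass the integral representation used in the proof of Theorem \ref{MainTheorem} and instead reduce both statements to interior Schauder-type estimates applied to telescoping dyadic differences of the $v_\de$. The key starting observation is that, by the third line of \eqref{appr.function} (applied at scale $2\de$), the difference $v_\de - v_{2\de}$ is $\Lc$-harmonic on $\Kb_\de$ with $\|v_\de - v_{2\de}\|_{L^\infty(\Kb_\de)} \le c\de^\rb\om(\de)$. Under $\ab \in C^{k_0+3}$, the standard interior estimate $|\nabla^k u(x)| \le Cr^{-k}\|u\|_{L^\infty(B_r(x))}$ holds for $\Lc$-harmonic $u$ up to order $k = k_0+1$. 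Taking $r = \de/2$ and noting that $B_{\de/2}(x) \subset \Kb_\de$ whenever $x \in \Kb_{\de/2}$ yields
\[
  |\nabla^k (v_\de - v_{2\de})(x)| \le C\de^{\rb-k}\om(\de),\qquad x \in \Kb_{\de/2},\ 1 \le k \le k_0+1.
\]

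For \eqref{appr.deriv}, I would fix $\de_* \sim \diam(\Om)$ and telescope $v_\de = v_{\de_*} + \sum_j (v_{2^{-(j+1)}\de_*} - v_{2^{-j}\de_*})$ over the dyadic scales $s = 2^{-j}\de_* \in [\de,\de_*]$. Rescaling the previous display at each level gives
\[
  |\nabla^{k_0+1} v_\de(x)| \le C + C\sum_{s \ge \de} s^{\rb-k_0-1}\om(s),\qquad x \in \Kb_{\de/2}.
\]
In the critical case $k_0 = \rb$, the second condition on $\om$, $\ta\int_\ta^\infty \om(t)/t^2\,dt \le c\om(\ta)$, makes the sum exactly of order $\om(\de)/\de$; for $k_0 < \rb$ the sum is at worst $O(\log(\de_*/\de))$, hence a fortiori dominated by $\om(\de)/\de$ for small $\de$. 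For \eqref{appr.deriv.2}, the same interior estimate applied at $x \in \Kb$ gives $|\partial^\a v_{\de_j}(x) - \partial^\a v_{\de_{j+1}}(x)| \le C\de_j^{\rb-|\a|}\om(\de_j)$ with $\de_j = 2^{-j}\de_*$, for $1 \le |\a| \le k_0$. The first condition $\int_0^\ta \om(t)/t\,dt \le c\om(\ta)$ (which precisely covers the endpoint $|\a|=\rb$) makes the right-hand side summable, so $\partial^\a v_{\de_j}(x)$ is Cauchy in $C(\Kb)$. I would define $f_{(\a)}(x)$ as this limit; the tail of the series gives $|f_{(\a)}(x) - \partial^\a v_{\de_j}(x)| \le C\de_j^{\rb-|\a|}\om(\de_j)$, which extends to arbitrary $\de$ by comparison with the nearest dyadic scale, yielding \eqref{appr.deriv.2}.

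The main technical obstacle is securing the pointwise interior Schauder-type bound $|\nabla^{k_0+1}u(x)| \le Cr^{-(k_0+1)}\|u\|_{L^\infty(B_r(x))}$ for $\Lc$-harmonic $u$ under only $\ab \in C^{k_0+3}$. Classically, one either bootstraps the $C^{m,\alpha}$ Schauder theorem or differentiates $\Lc u = 0$ repeatedly to obtain auxiliary elliptic equations for $\partial^\a u$ with right-hand sides controlled by derivatives of $\ab$. Either way, the same type of machinery invoked in the proof of Theorem \ref{MainTheorem}---the higher-order derivative estimates for the Green function drawn from \cite{Kras.Sing} and \cite{Widman}---supplies precisely what is needed. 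Once the interior estimate is in hand, the rest of the argument is a routine summation using the two integral conditions on the modulus $\om$.
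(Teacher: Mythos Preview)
Your proposal is correct. For \eqref{appr.deriv}, your approach coincides with the paper's: both telescope $v_\de$ into a sum of dyadic differences $v_{2^{j-1}\de}-v_{2^{j}\de}$ and bound $\nabla^{k_0+1}$ of each difference via interior regularity for $\Lc$-harmonic functions (the paper carries this out through the Poisson-type representation with the Green function in a ball of radius $\Delta$ and then differentiates; you invoke the interior Schauder estimate directly, which amounts to the same thing). For \eqref{appr.deriv.2}, however, you take a genuinely different route. The paper defines the surrogate derivatives $f_{(\a)}$ by an explicit integral formula, namely by formally differentiating the representation $f_0(x)=c_{\Nb}\int_{\Om}\Lc f_0(y)\,G^{\circ}(x,y)\,dy$ under the integral sign (together with the correction terms $F_\n$), and then estimates $f_{(\a)}-\partial^\a v_\de$ by comparing the two integral expressions directly, splitting into near and far contributions over the sets $W_\n$. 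You instead define $f_{(\a)}$ abstractly as the $C(\Kb)$-limit of $\partial^\a v_{\de_j}$, obtaining both existence of the limit and the approximation rate from the same telescoping Cauchy estimate. Your argument is more elementary and avoids the integral-representation machinery entirely; the paper's approach, on the other hand, produces an explicit formula for $f_{(\a)}$ in terms of $\Lc f_0$ and $\partial_x^{\a}G^{\circ}$, which is then reused in the proof of the Taylor-remainder estimate, Theorem~\ref{Thm Holder estim}.
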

\subsection{Structure of the paper} We start in Sect. \ref{Sect.Prep} by 
presenting  general
material concerning   certain geometry considerations, and formulate estimates 
of important integrals used in further analysis and of derivatives of the Green function, 
including the results of \cite{Kras.Sing} and \cite{Widman},
 In Sect. 3, we introduce the 
 averaging kernel $K(x,y)$ and prove 
 estimates of its derivatives. This is the most technical part of 
 the paper. Next, in Sect. 4, we construct the
  extension function $f_0$, derive  its important  properties and 
  prove its integral representation, which
  results in presenting the required approximation of the given function 
   $f(x)\in H_{\Lc}^{\rb+\om}(\Kb)$, thus proving Theorem 
   \ref{MainTheorem}. In Sect 5, we 
   discuss generalized derivatives of the function $f$, and prove 
   Theorem \ref{Thm.quality}. Then, in Sect.6, we present the 
   example
   showing that for a wild set $\Kb$, the condition on local 
   approximation cannot, generally, be relaxed.
   
   Proofs of our estimates for derivatives of the Green function and of important integral inequalities
     are placed 
    in the Appendix.

\subsection{Conventions}In the course of the paper, we denote by 
the same symbol 
$c$ or $C$ various constants whose particular value is of no 
importance, as long as 
this does not cause confusion; sometimes, subscripts or superscripts 
are used in 
order to distinguish between such constants in the same formula.  
More important 
constants may be highlighted by a different font. By 
$f'_x=\partial_x f=\nabla_x f$ we denote the $x$-gradient of a 
function $f$;
for a vector function $F$, $\nabla_x F$ stands for the Jacobi 
matrix of $F$. The symbol $| \cdot |$ denotes the Euclidean norm of 
the vector involved, $\mathbf{E}$ denotes the unit matrix.

\section{Some preparatory facts}\label{Sect.Prep}

\subsection{Geometry considerations}\label{SEct.geometry}
Let $b_{\Nb}$ be the constant in the covering property of 
Ahlfors-David--regular sets of 
dimension $\Nb$-$2$,
 see \cite{Mattila}, Lemma 2.1, and \cite{RZ25}, Corollary 2.2 
 there,
  namely,
\begin{property}\label{MattilaProperty}
For any $\de<\diam(\Kb),$ there exists a finite cover $\U(\de)$ of 
$\Kb$ by open balls 
$B_\de(x_{\am})$ of radius $\de$,
\begin{equation*}
  \Kb\subset\bigcup_{x_{\am}\in\Kb}B_\de(x_{\am})\equiv 
  \Kb_{(\de)},
\end{equation*}
such that for any $r\in[\de,\diam(\Kb)]$ and any point 
$\x_0\in\Kb$, the quantity of 
points
 $x_{\am}$ in the ball $B_{r}(\x_0)$ is not greater than 
 $b_\Nb\left(\frac{r}{\de}\right)^{\Nb-2}$.
\end{property}

Our aim at this moment is to associate, with each ball 
$B_\de(x_{\am})$ of the above 
cover, some new ball with radius $2\de$, whose center  is  on the order 
$\de$ distance from 
$x_{\am}$ and which is separated from $\Kb_{(\de)}$, again, by an order 
$\de$ distance.

We denote by $\si_{\Nb}$ the area of the unit sphere in 
$\R^{\Nb}.$ Next, we 
introduce some coefficient $A=A_{\Nb}$,
 whose value will be determined  later on in a special way.
Thus, for any $x$, for the sphere $S_{\de A}(x),$ its 
$\Nb$-$1$-dimensional 
surface
 measure equals $\si_{\Nb} A^{\Nb-1}\de^{\Nb-1}$.

 We take some point $x_{{\am}_0}$ among  centers  of the balls in 
 the cover $\U(\de)$ in 
 Property \ref{MattilaProperty} and denote temporarily by 
 $\x_0\equiv \x_{{\am}_0}$ (one of) the 
 point(s) in $\Kb$ closest to $x_{{\am}_0}$ (it may happen that these 
 points coincide), so
 $|\x_0-x_{{\am}_0}|\le\de.$ 
  For the  ball $\Bb:=B_{\de (A+4)}(\x_0),$  there exist no more 
  than 
 $b_{\Nb}(A+4)^{\Nb-2}$  balls $B_\de(x_{\am})$ in the above  
 cover $\U(\de)$, whose  
 centers $x_{\am}$ lie  in $\Bb$. 
We place on the sphere $\Sbb:=S_{\de A}(\x_0)$,  in an arbitrary 
way, a collection 
of $m\le b_{\Nb}(A+4)^{\Nb-2}$ 
points $\z_k,$ and evaluate the area on the part of the sphere 
$\Sbb$
 covered by the union of   balls with radii $6\de$, centered at 
 these points $\z_k$;
 we denote this area by $\si(\x_0,\de)$.  This area is no greater 
 than the sum of 
 areas of  spherical
  caps upon $\Sbb$, covered by single balls, therefore,
\begin{equation}\label{29}
  \si(\x_0,\de)\le 6^{\Nb-1}\si_{\Nb} 
  b_{\Nb}(A+4)^{\Nb-2}\de^{\Nb-1}.
\end{equation}
We denote by $\tilde{A}_{\Nb}$ the largest positive root of the 
equation
\begin{equation*}
  6^{\Nb-1}b_{\Nb}(\tilde{A}_{\Nb}+4)^{\Nb-2}=\frac12 
  \tilde{A}_{\Nb}^{\Nb-1}
\end{equation*}
and set 
\begin{equation}\label{30}
  A=\max(\tilde{A}_{\Nb}, 13).
\end{equation}
It follows from \eqref{29}, \eqref{30} that whatever  points  
$\z_k$, no more than  $ 
b_\Nb(A+4)^{N-2}$ of them,  we place on the sphere $\Sbb$,   at least a half 
of the area of this sphere is \emph{not} covered by the concentric 
   balls $B_{6\de}(\z_k)$. We denote this, non-covered, part of 
   the sphere  by 
   $\Yc\equiv \Yc_{x_0,\de}(\{\z_k\})$, so,
 
 \begin{equation*}%\label{31}
 \meas_{\Nb-1}(\Yc_{x_0,\de}(\{\z_k\}))\ge \frac12 
 \meas_{\Nb-1}\Sbb.
 \end{equation*}

 Next we consider the following geometrical construction. We 
 choose the above 
 points $\z_k$ in a special way.  Denote by $\Pc$ the \emph{closed} 
 spherical annulus 
 $\Pc=\overline{B_{A_\Nb+4\de}(\x_0)}\setminus 
 {B_{A_\Nb-4\de}(\x_0)}$ and 
 consider \emph{ only} those points $x_{\am}$ which lie in $\Pc$. 
 Suppose that a certain 
 point $x_{\am}$ lies on the sphere $\Sbb$. Then we set 
 $\z_{\am}=x_{\am}$.  If 
 $x_{\am}$ \emph{does not} lie on this sphere, we consider the 
 straight ray, which we 
 denote  $\overrightarrow{[\x_0 ,x_{\am}]}$,  starting at $\x_0$ and 
 passing through 
 $x_{\am}$, and accept as $\z_{\am}$   the point where this ray 
 hits the sphere 
 $\Sbb.$ As explained above, the set $\Yc$ is non-empty. We take 
 an arbitrary point 
 $V$ in this set.
 
 Our construction started with choosing a  point $\x_0\equiv 
 x_{\am_0}$ in Property 
 \ref{MattilaProperty}. To reflect it, we mark the point $V$, just 
 defined, as 
 $V_{\am_0}$, thus keeping $\am_0$ fixed.
  
 We are going to estimate from below the distance between 
 $x_{\am}$ and $V_{\am_0}$.
 For a point $x_{\am}$ in $\Pc$ and $\z_{\am}\in \Sbb$, we have 
 $|V_{\am_0}-\z_{\am}|\ge 6\de.$ 
 The point $x_{\am}$ lies on the ray 
 $\overrightarrow{[\x_0,\z_{\am}]}$, therefore, 
 $|x_{\am}-V_{\am_0}|$ is not less than the length of the perpendicular 
 dropped from $V_{\am_0}$ 
 onto $\overrightarrow{[\x_0,\z_{\am}]}$. Since 
 $|\x_0-\z_{\am}|=|\x_0-V_{\am_0}|\ge 13\de,$ the 
 length of this perpendicular is not less than $4\de$. Therefore, 
 for $x_{\am}$, we 
 have
 \begin{equation}\label{A32}
   B_{2\de}(x_{\am})\cap B_{2\de}(\x_0)=\varnothing.
 \end{equation}
 If, on the opposite, the point $x_{\am}$ does not lie in $\Pc$ then 
 \eqref{A32} 
 obviously holds.
 
 In this way, with each starting point $ x_{{\am}_0}$, we 
 associate the point 
 $V_{{\am}_0}$ such that $B_{2\de}(x_{\am})\cap 
 B_{2\de}(V_{{\am}_0})=\varnothing$ 
 for all ${\am}\ne {\am}_0$. Additionally, 
 
 \begin{equation*}
   |V_{{\am}_0}-x_{{\am}_0}|\le 
   |V_{{\am}_0}-\x_0|+|\x_0-x_{{\am}_0}|\le 14 \de,
 \end{equation*}
 and thus the point $V_{{\am}_0}$ is separated from $\Kb$

Finally, we introduce the notation
\begin{equation*}
  \Kb'_\de=\bigcup_{x_{\am}}B_{2\de}(x_{\am});
\end{equation*}
it follows that $\Kb_\de\subset\Kb'_\de$

\subsection{Estimates of some  integrals}\label{Sect.Integrals}
  In the study of approximations, we will need  estimates for some 
  integrals 
  involving the distance to the set $\Kb$.  Here we give the formulations; proofs are placed 
  in the Appendix. 

Let $\Kb\subset\R^\Nb$ be a compact set, $\Nb$-$2$-regular, and 
  let the point $x$ 
  lie outside $\Kb$, $d(x)\equiv\dist(x,\Kb)\le\de.$
  \begin{lem}\label{lem2}
\begin{equation}\label{21}
\int_{B_{2\de}(x)}\frac{d(y)^{\rb-2}\om(d(y))}{|x-y|^{\Nb-2}} dy
\le C \de^{\rb}\om(\de).
\end{equation}
  \end{lem}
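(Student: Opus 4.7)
The plan is to reduce to an integral centered at a point of $\Kb$ via the nearest point, and then to exploit the $\Nb$-$2$-AD regularity of $\Kb$ through the tubular volume bound
\[
|\{y\in B_R(z): d(y)\le t\}|\le C\,t^{2}R^{\Nb-2},\qquad z\in\Kb,\ 0<t\le R\le\diam(\Kb),
\]
which follows by covering $\Kb\cap B_{2R}(z)$ by $O((R/t)^{\Nb-2})$ balls of radius $t$ and inflating them to $2t$-balls (each of Euclidean volume $\sim t^{\Nb}$).

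First I would treat the case $x\in\Kb$. Decompose $B_{2\de}(x)$ into dyadic annuli $A_k=\{r_{k+1}<|x-y|\le r_k\}$ with $r_k=2^{1-k}\de$; on $A_k$ one has $|x-y|^{-(\Nb-2)}\sim r_k^{-(\Nb-2)}$ and, since $x\in\Kb$, $d(y)\le|x-y|\le r_k$. A second dyadic decomposition of $B_{r_k}(x)$ according to the layers $L_j=\{t_{j+1}<d(y)\le t_j\}$, $t_j=2^{-j}r_k$, combined with the tube estimate $|L_j\cap B_{r_k}(x)|\le C t_j^{2}r_k^{\Nb-2}$ and the pointwise bound $d(y)^{\rb-2}\om(d(y))\le C t_j^{\rb-2}\om(t_j)$ on $L_j$, gives
\[
\int_{B_{r_k}(x)}d(y)^{\rb-2}\om(d(y))\,dy\le C\,r_k^{\Nb-2}\sum_{j\ge 0}t_j^{\rb}\om(t_j)\le C\,r_k^{\Nb-2+\rb}\om(r_k),
\]
where the inner sum collapses either by monotonicity of $\om$ (when $\rb\ge 1$) or by the first Dini integral from the hypothesis (when $\rb=0$). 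Dividing by $r_k^{\Nb-2}$, summing over $k$, and applying the same collapse once more yields $\sum_k r_k^{\rb}\om(r_k)\le C\de^{\rb}\om(\de)$ in this case.

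For general $x$ with $d(x)\le\de$, let $x_0\in\Kb$ with $|x-x_0|=d(x)$, so $B_{2\de}(x)\subset B_{3\de}(x_0)$, and split the integration at the scale $2d(x)$. On $\{|x-y|>2d(x)\}$ one has $|x-y|\ge\tfrac12|y-x_0|$, so the integral there is dominated by the analogous one centered at $x_0\in\Kb$ over $B_{3\de}(x_0)$, which is handled by the previous step. On $\{|x-y|\le d(x)/2\}$, $d(y)\in[d(x)/2,3d(x)/2]$, and a direct computation gives the bound $C d(x)^{\rb-2}\om(d(x))\cdot d(x)^{2}=C d(x)^{\rb}\om(d(x))\le C\de^{\rb}\om(\de)$. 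The transitional annulus $\{d(x)/2\le|x-y|\le 2d(x)\}\subset B_{3d(x)}(x_0)$ contributes at most $C d(x)^{-(\Nb-2)}\cdot d(x)^{\Nb-2+\rb}\om(d(x))=C d(x)^{\rb}\om(d(x))$ by the same tube argument applied at $x_0$. Summing the three contributions completes the proof.

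The main technical care lies in the first step: both the inner sum over the $d(y)$-layers and the outer sum over the $|x-y|$-annuli must collapse, and both collapses rely on the Dini-type hypothesis on $\om$ (only for $\rb=0$ is this essential at every stage; for $\rb\ge 1$ mere monotonicity suffices for the outer sum). The reduction from arbitrary $x$ to $x\in\Kb$ is mostly geometric but requires separate attention to the transitional scale $|x-y|\sim d(x)$, where neither $d(y)\sim d(x)$ nor $|y-x_0|\sim|x-y|$ is cleanly available.
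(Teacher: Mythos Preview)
Your argument is correct. The dyadic decomposition in the $d(y)$-layers together with the tubular volume bound coming from $\Nb{-}2$-AD regularity is exactly the right mechanism, and your reduction from general $x$ to $x_0\in\Kb$ via the three-region split is clean; the collapses of both the inner and outer dyadic sums are justified by monotonicity of $\om$ for $\rb\ge1$ and by the Dini hypothesis for $\rb=0$, as you note.

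The paper's route is genuinely different and much shorter: it simply observes that $d(y)\le c\de$ on $B_{2\de}(x)$, writes $d(y)^{\rb-2}\le C\de^{\rb}\,d(y)^{-2}$, and thereby reduces immediately to the case $\rb=0$, which was already established in the authors' earlier paper (Lemma~6.2 in~\cite{RZ25}). So the paper leverages prior work, while you give a fully self-contained argument that handles all $\rb\ge0$ uniformly. Your approach has the advantage of being independent of~\cite{RZ25} and of exhibiting explicitly where AD-regularity enters (through the tube estimate), at the cost of more bookkeeping; the paper's approach is a one-line reduction but presupposes the $\rb=0$ estimate as a black box. In substance, your dyadic argument is essentially what the proof of that cited lemma must contain, so the two arguments ultimately rest on the same geometric input.
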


  Lemma \ref{lem2}, has a useful corollary.
  \begin{cor}\label{Cor.L2}
    Under the same conditions imposed on $x$,
    
    \begin{equation}\label{22}
      \int_{B_{2\de}(x)} d(y)^{\rb-2}\om(d(y))dy\le c 
      \de^{\rb+\Nb-2}\om(\de).
    \end{equation}
  \end{cor}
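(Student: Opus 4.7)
The plan is to deduce the corollary directly from Lemma~\ref{lem2} by an elementary pointwise comparison. For every $y \in B_{2\de}(x)$ one trivially has $|x-y|\le 2\de$, and hence
\[
\frac{1}{(2\de)^{\Nb-2}} \le \frac{1}{|x-y|^{\Nb-2}}.
\]
Multiplying both sides by the nonnegative quantity $d(y)^{\rb-2}\om(d(y))$ gives the pointwise bound
\[
d(y)^{\rb-2}\om(d(y)) \le (2\de)^{\Nb-2}\,\frac{d(y)^{\rb-2}\om(d(y))}{|x-y|^{\Nb-2}}, \qquad y\in B_{2\de}(x).
\]
Integrating this over $B_{2\de}(x)$ and invoking the estimate \eqref{21} of Lemma~\ref{lem2} for the resulting integral on the right yields
\[
\int_{B_{2\de}(x)} d(y)^{\rb-2}\om(d(y))\,dy \;\le\; (2\de)^{\Nb-2} \cdot C\,\de^{\rb}\om(\de) \;=\; c\,\de^{\rb+\Nb-2}\om(\de),
\]
which is exactly \eqref{22}.

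There is really no obstacle here: the only conceivable worry would be the integrability of $d(y)^{\rb-2}\om(d(y))$ near $\Kb$ when $\rb$ is small, but this has already been settled inside Lemma~\ref{lem2}, which controls the \emph{larger} weighted integrand. Thus the proof is a one-line consequence, and no additional geometric input about $\Kb$ beyond what was already used in the lemma is required.
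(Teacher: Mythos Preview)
Your proof is correct and is essentially identical to the paper's own argument: both use the pointwise inequality $|x-y|\le 2\de$ (equivalently $\de^{2-\Nb}\le C|x-y|^{2-\Nb}$) to insert the factor $|x-y|^{2-\Nb}$ and then invoke Lemma~\ref{lem2}.
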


   Another important property concerns the integral  
   \begin{equation}\label{int.4}     
   I_k(x)=\int_{B_{c_0\de}(x)}\frac{d(y)^{k-2}\om(d(y))}{|x-y|^{\Nb-2+k}}dy, 
   \, 
   x\in\Kb, c_0\ge2.
   \end{equation}
   \begin{lem}\label{lemA1} For $k>0,$ the inequality holds
   \begin{equation}\label{lem.int.4}
    I_k(x)\le c\om(\de(x)).
   \end{equation}
   \end{lem}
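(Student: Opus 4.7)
The plan is to decompose the ball $B_{c_0\de}(x)$ dyadically around $x$, apply Corollary \ref{Cor.L2} on each annulus to absorb the factor $d(y)^{k-2}\om(d(y))$, and then sum over the scales using the Dini-type hypothesis on $\om$.

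Concretely, for $j=0,1,2,\ldots$, set $r_j = 2^{-j}c_0\de$ and
\begin{equation*}
A_j = \{\,y \in \R^{\Nb} : r_{j+1} < |x-y| \le r_j\,\},
\end{equation*}
so that $B_{c_0\de}(x) \subset \bigcup_{j\ge 0} A_j$ (up to a null set). On $A_j$, the factor $|x-y|^{-(\Nb-2+k)}$ is comparable to $r_j^{-(\Nb-2+k)}$ and may be pulled out. Since $x \in \Kb$, we have $d(x)=0\le r_j/2$, so Corollary \ref{Cor.L2} applies with $\de'=r_j/2$ and $\rb=k$ (which is legitimate because $k>0$), yielding
\begin{equation*}
\int_{A_j} d(y)^{k-2}\om(d(y))\,dy \;\le\; \int_{B_{r_j}(x)} d(y)^{k-2}\om(d(y))\,dy \;\le\; c\,r_j^{\,k+\Nb-2}\om(r_j).
\end{equation*}
Combining these two bounds gives the crucial per-scale estimate
\begin{equation*}
\int_{A_j} \frac{d(y)^{k-2}\om(d(y))}{|x-y|^{\Nb-2+k}}\,dy \;\le\; c\,\om(r_j).
\end{equation*}

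Summing in $j$ reduces the claim to showing $\sum_{j\ge 0}\om(2^{-j}c_0\de) \le C\,\om(\de)$. This is a direct consequence of the assumed condition $\int_0^{\ta}\om(t)/t\,dt \le c\,\om(\ta)$: since $\om$ is nondecreasing, for each $j$,
\begin{equation*}
\om(r_{j+1})\log 2 \;\le\; \int_{r_{j+1}}^{r_j} \frac{\om(t)}{t}\,dt,
\end{equation*}
and telescoping the integrals over the dyadic intervals up to $c_0\de$ gives $\sum_j \om(r_j) \le C\om(c_0\de)\le C'\om(\de)$ (the last comparison follows from applying the hypothesis on a slightly larger interval, or simply from the monotonicity and sub-additive growth encoded in the assumption). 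Putting everything together, $I_k(x) \le c\,\om(\de)$, as required.

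I do not expect genuine obstacles here: Corollary \ref{Cor.L2} neatly handles the singularity of $d(y)^{k-2}$ near $\Kb$, and the Dini-type assumption on $\om$ is precisely what is needed to convert the geometric series $\sum \om(2^{-j}\de)$ into $O(\om(\de))$. The only mild care point is checking that the power $k>0$ is what makes the per-scale bound scale-invariant; if $k=0$, an extra logarithmic factor appears and the argument fails, which is consistent with the statement's hypothesis $k>0$.
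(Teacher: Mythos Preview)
Your proof is correct and follows the same route as the paper's: dyadic annular decomposition of $B_{c_0\de}(x)$, a per-annulus bound of order $\om(r_j)$, and summation via the Dini hypothesis $\int_0^{\ta}\om(t)t^{-1}\,dt\le c\,\om(\ta)$. The only cosmetic difference is that you obtain the per-scale bound by invoking Corollary~\ref{Cor.L2} (with $k$ in place of $\rb$, which its proof supports for any nonnegative exponent and for $x\in\Kb$ since then $d(x)=0$), whereas the paper unwinds that step inline and cites the underlying lemma from \cite{RZ25} directly.
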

   
   There is a useful corollary: 
  \begin{cor}\label{Cor2} For $1\le k<\rb$,
   \begin{equation}\label{A6a}
  J_k= 
  \int_{B_{c_0\de}(x)}\frac{d(y)^{\rb-2}\om(d(y))}{|y-x|^{\Nb-2+k}}dy\le 
  c 
  \de^{\rb-k}\om(\de).
   \end{equation}
   \end{cor}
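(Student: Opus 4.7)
The plan is to deduce this Corollary directly from Lemma \ref{lemA1} by a simple power-splitting argument, exploiting the fact that $x\in\Kb$ forces the whole ball $B_{c_0\de}(x)$ to sit within distance $c_0\de$ of $\Kb$.

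First, I would note that since $x\in\Kb$, one has $d(x)=0$, and so the triangle inequality yields
\begin{equation*}
d(y)=\dist(y,\Kb)\le|y-x|\le c_0\de \qquad \text{for every } y\in B_{c_0\de}(x).
\end{equation*}
Next, because the hypothesis $k<\rb$ guarantees that the exponent $\rb-k$ is strictly positive, I would split the integrand of $J_k$ as
\begin{equation*}
\frac{d(y)^{\rb-2}\om(d(y))}{|y-x|^{\Nb-2+k}} = d(y)^{\rb-k}\cdot\frac{d(y)^{k-2}\om(d(y))}{|y-x|^{\Nb-2+k}},
\end{equation*}
and use the pointwise bound $d(y)^{\rb-k}\le(c_0\de)^{\rb-k}\le C\de^{\rb-k}$ on the first factor. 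Pulling this uniform constant out of the integral leaves exactly $I_k(x)$ from Lemma \ref{lemA1}, and the hypothesis $k\ge 1$ guarantees that Lemma \ref{lemA1} applies. Combining these steps produces
\begin{equation*}
J_k\le C\de^{\rb-k} I_k(x)\le c\,\de^{\rb-k}\om(\de),
\end{equation*}
which is precisely \eqref{A6a}.

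There is no genuine obstacle here: the conditions $1\le k<\rb$ have been tailored exactly so that the split exponent $\rb-k$ is nonnegative (permitting the pointwise replacement of $d(y)^{\rb-k}$ by a constant depending only on $\de$) while at the same time $k>0$ keeps us inside the range of validity of Lemma \ref{lemA1}. No further estimate or additional cancellation is required.
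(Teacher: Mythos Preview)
Your proof is correct and follows essentially the same approach as the paper: factor $d(y)^{\rb-2}=d(y)^{\rb-k}\cdot d(y)^{k-2}$, bound $d(y)^{\rb-k}\le C\de^{\rb-k}$ on $B_{c_0\de}(x)$ (using $x\in\Kb$ and $\rb-k>0$), and then apply Lemma~\ref{lemA1} to the remaining integral $I_k(x)$. The paper's argument is terser but identical in substance.
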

    
   In the next lemma we estimate the integral over the complement 
   $\Ct    B_{c_0\de}(x)$ of the ball.
\begin{lem}\label{Lemma2} For $k>0$, we have
\begin{equation}\label{A7}
\int_{\Ct} 
B_{c_0\de(x)}\frac{d(y)^{k-2}\om(d(y))}{|y-x|^{\Nb-1+k}}dy\le 
c\frac{\om(\de)}{\de}.
\end{equation}
\end{lem}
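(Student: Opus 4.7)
\textbf{Plan for the proof of Lemma \ref{Lemma2}.} The plan is to reduce the estimate on the complement of $B_{c_0\de}(x)$ to a geometric series of local estimates, each of which is handled by Corollary \ref{Cor.L2}, and then to sum the resulting dyadic terms via the integral condition imposed on $\om$. Concretely, I set $r_j=2^jc_0\de$ for $j=0,1,2,\ldots$ and decompose
$$\Ct B_{c_0\de}(x)=\bigsqcup_{j\ge 0}A_j,\qquad A_j:=B_{r_{j+1}}(x)\setminus B_{r_j}(x).$$
On $A_j$ one has $|y-x|\ge r_j$, and since $d(x)\le\de\le r_j$, also $d(y)\le|y-x|+d(x)\le r_{j+1}+\de\le 3r_j$.

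On the annulus $A_j$ I replace $|y-x|^{-(\Nb-1+k)}$ by the constant $r_j^{-(\Nb-1+k)}$ and then apply Corollary \ref{Cor.L2}, with the role of $\de$ played by $r_{j+1}/2$. The required hypothesis $d(x)\le r_{j+1}/2$ follows from $c_0\ge 2$ together with $d(x)\le\de$. This yields
$$\int_{A_j}d(y)^{k-2}\om(d(y))\,dy\;\le\;\int_{B_{r_{j+1}}(x)}d(y)^{k-2}\om(d(y))\,dy\;\le\;c\,r_{j+1}^{\,k+\Nb-2}\om(r_{j+1}),$$
so that, after absorbing the fixed doubling factors,
$$\int_{A_j}\frac{d(y)^{k-2}\om(d(y))}{|y-x|^{\Nb-1+k}}\,dy\;\le\;c\,\frac{\om(r_j)}{r_j}.$$

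It remains to sum this dyadic series. Monotonicity of $\om$ gives the standard dyadic-to-integral comparison
$$\frac{\om(r_j)}{r_j}\;\le\;2\int_{r_j}^{r_{j+1}}\frac{\om(t)}{t^2}\,dt,$$
whence
$$\sum_{j\ge 0}\frac{\om(r_j)}{r_j}\;\le\;2\int_{c_0\de}^{\infty}\frac{\om(t)}{t^2}\,dt.$$
Now I invoke the assumed condition on $\om$ with $\ta=c_0\de$, namely $\ta\int_{\ta}^{\infty}\om(t)t^{-2}\,dt\le c\,\om(\ta)$, and conclude with $\om(c_0\de)\le C\om(\de)$ (which is implied by the subadditivity of a modulus of continuity for bounded $c_0$) that
$$\int_{\Ct B_{c_0\de}(x)}\frac{d(y)^{k-2}\om(d(y))}{|y-x|^{\Nb-1+k}}\,dy\;\le\;c\,\frac{\om(\de)}{\de}.$$

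There is no single hard step in this argument; the only points requiring care are verifying that Corollary \ref{Cor.L2} is legitimately applicable at each dyadic scale (which relies on the standing hypothesis $d(x)\le\de$ together with $c_0\ge 2$), and the passage from the dyadic sum to the tail integral, where the monotonicity of $\om$ and the standing integral condition combine cleanly.
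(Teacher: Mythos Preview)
Your argument is correct and follows essentially the same route as the paper: dyadic decomposition of $\Ct B_{c_0\de}(x)$ into expanding annuli, replacement of $|y-x|^{-(\Nb-1+k)}$ by its lower bound on each annulus, application of Corollary~\ref{Cor.L2} (with $k$ in the role of $\rb$) to control $\int_{B_{r_{j+1}}(x)}d(y)^{k-2}\om(d(y))\,dy$, and then summation of the resulting terms $\om(r_j)/r_j$ via the standing integral condition on $\om$. Your verification of the hypothesis $d(x)\le r_{j+1}/2$ needed for Corollary~\ref{Cor.L2} and your dyadic-to-integral comparison are both slightly more explicit than the paper's treatment, but the substance is identical.
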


\begin{lem}\label{Lem3a}Let $\rb\ge1$, $x_0\in\Kb,$ $x\in \mathrm{C}\Kb$ and $|x-x_0|\le 
\de/2$. Then
\begin{equation*}%\label{eq.Lem3A}
  \int_{B_{\de}(x)}d(y)^{\rb-2}\om(d(y))|x-y|^{2-\Nb}dy\le C \de^r\om(\de)
\end{equation*}
\begin{equation*}%\label{eq.Lem3B}
 \int_{B_{\de}(x_0)}d(y)^{\rb-2}\om(d(y))|x_0-y|^{2-\Nb}dy\le C\de^\rb \om(\de).
\end{equation*}
\end{lem}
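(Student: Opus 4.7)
The two inequalities in Lemma \ref{Lem3a} differ only in the location of the centers: in the first, $x$ is outside $\Kb$; in the second, the center $x_0$ lies on $\Kb$. The plan is to handle them separately, with the first being an almost immediate corollary of Lemma \ref{lem2} and the second requiring a dyadic decomposition together with a small perturbation trick so that Corollary \ref{Cor.L2} may be invoked.

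For the first inequality, note that since $x_0\in\Kb$ and $|x-x_0|\le\de/2$, we have $d(x)\le|x-x_0|\le\de/2\le\de$. Thus the hypothesis of Lemma \ref{lem2} is satisfied with the same parameter $\de$, and since $B_\de(x)\subset B_{2\de}(x)$ and the integrand is non-negative, the estimate
\begin{equation*}
\int_{B_\de(x)}d(y)^{\rb-2}\om(d(y))|x-y|^{2-\Nb}\,dy\le \int_{B_{2\de}(x)}\frac{d(y)^{\rb-2}\om(d(y))}{|x-y|^{\Nb-2}}\,dy\le C\de^\rb\om(\de)
\end{equation*}
is immediate.

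For the second inequality, the difficulty is that the singularity of $|x_0-y|^{2-\Nb}$ occurs exactly where $d(y)$ may be small, since $x_0\in\Kb$, so Lemma \ref{lem2} does not apply directly. I would decompose $B_\de(x_0)$ into dyadic annuli $A_j=\{y:2^{-j-1}\de<|y-x_0|\le 2^{-j}\de\}$, $j\ge 0$. On $A_j$ one has $|y-x_0|^{2-\Nb}\le C(2^{-j}\de)^{2-\Nb}$, so it suffices to bound $\int_{A_j}d(y)^{\rb-2}\om(d(y))\,dy$ by $\int_{B_{2^{-j}\de}(x_0)}d(y)^{\rb-2}\om(d(y))\,dy$. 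Corollary \ref{Cor.L2} is stated for centers outside $\Kb$, so I would fix any point $\tilde x_j$ in $B_{2^{-j-2}\de}(x_0)\setminus\Kb$ (which exists because $\Kb$, being $\Nb$-$2$-AD-regular, has zero Lebesgue measure), note $d(\tilde x_j)\le 2^{-j-1}\de$ and $B_{2^{-j}\de}(x_0)\subset B_{2\cdot 2^{-j}\de}(\tilde x_j)$, and apply Corollary \ref{Cor.L2} with parameter $2^{-j}\de$ to obtain
\begin{equation*}
\int_{B_{2^{-j}\de}(x_0)}d(y)^{\rb-2}\om(d(y))\,dy\le C(2^{-j}\de)^{\rb+\Nb-2}\om(2^{-j}\de).
\end{equation*}

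Combining, the contribution of $A_j$ is at most $C(2^{-j}\de)^{\rb}\om(2^{-j}\de)$, and summing in $j$ gives
\begin{equation*}
\int_{B_\de(x_0)}d(y)^{\rb-2}\om(d(y))|x_0-y|^{2-\Nb}\,dy\le C\de^\rb\sum_{j\ge0}2^{-j\rb}\om(2^{-j}\de)\le C'\de^\rb\om(\de),
\end{equation*}
where the last bound uses $\rb\ge 1$ together with monotonicity of $\om$, making $\sum_{j\ge0}2^{-j\rb}$ a convergent geometric series whose sum multiplies $\om(\de)$. The only non-routine point is the perturbation argument for applying Corollary \ref{Cor.L2} at a center lying on $\Kb$; everything else is bookkeeping with dyadic annuli. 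No new analytic input beyond the integral estimates of Section \ref{Sect.Integrals} is needed.
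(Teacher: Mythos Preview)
Your argument is correct. For the first inequality you do exactly what the paper does: invoke Lemma~\ref{lem2} after noting $d(x)\le\de$.

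For the second inequality the paper takes a shorter path. Since $|x_0-y|\le\de$ and $\rb\ge1$, one has $|x_0-y|^{2-\Nb}\le \de^{\rb}\,|x_0-y|^{2-\Nb-\rb}$, so
\[
\int_{B_\de(x_0)}d(y)^{\rb-2}\om(d(y))|x_0-y|^{2-\Nb}\,dy
\;\le\;\de^{\rb}\int_{B_{2\de}(x_0)}\frac{d(y)^{\rb-2}\om(d(y))}{|x_0-y|^{\Nb-2+\rb}}\,dy
\;\le\;C\de^{\rb}\om(\de),
\]
the last step being Lemma~\ref{lemA1} with $k=\rb$. Your dyadic decomposition plus perturbation to a center off $\Kb$ so that Corollary~\ref{Cor.L2} applies is perfectly valid, and in fact the proof of Lemma~\ref{lemA1} itself unwinds into essentially the same dyadic argument; the paper simply packages that work into Lemma~\ref{lemA1} and reuses it here. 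The trade-off is that the paper's route is a one-liner given the lemmas already in hand, while yours is more self-contained and avoids the somewhat artificial step of replacing $|x_0-y|^{2-\Nb}$ by the more singular $|x_0-y|^{2-\Nb-\rb}$.
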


Finally, we need the following estimate.

\begin{lem}\label{Lem4}Under conditions of Lemma \ref{Lem3a},
\begin{equation}\label{eq.Lem4}
 \Jc(x)= \int_{|x-y|>\de}d(y)^{\rb-2}\om(d(y))|x-y|^{1-\Nb}dy\le C\de^{-1}\om(\de).
\end{equation}
\end{lem}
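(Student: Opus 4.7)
The plan is to split the integration region $\{|x-y|>\de\}$ at the intermediate scale $|x-y|=2\de$ and apply two of the estimates already established: Corollary \ref{Cor.L2} on the near piece and Lemma \ref{Lemma2} on the far piece. The hypothesis $|x-x_0|\le \de/2$ with $x_0\in\Kb$ immediately gives $d(x)=\dist(x,\Kb)\le \de/2\le \de$, so the standing condition $d(x)\le \de$ under which Corollary \ref{Cor.L2} and Lemma \ref{Lemma2} were proved is available at the point $x$.

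Write $\Jc(x)=\Jc_1(x)+\Jc_2(x)$, where $\Jc_1$ is the integral over the annulus $\{\de<|x-y|\le 2\de\}$ and $\Jc_2$ is the integral over $\{|x-y|>2\de\}$. For $\Jc_1$, the uniform bound $|x-y|^{1-\Nb}\le\de^{1-\Nb}$ together with the inclusion of the annulus in $B_{2\de}(x)$ and Corollary \ref{Cor.L2} give
\[
\Jc_1(x)\le \de^{1-\Nb}\int_{B_{2\de}(x)} d(y)^{\rb-2}\om(d(y))\,dy\le C\de^{\rb-1}\om(\de)=C\de^{\rb}\cdot\frac{\om(\de)}{\de}.
\]
Since $\de\le\diam\Om$ is bounded and $\rb\ge 1$, the prefactor $\de^{\rb}$ is controlled by a constant, whence $\Jc_1(x)\le C'\om(\de)/\de$.

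For $\Jc_2$, the idea is to trade a power of $|x-y|$ for the bounded diameter of $\Om$. Factor
\[
|x-y|^{1-\Nb}=|x-y|^{\rb}\cdot|x-y|^{-(\Nb-1+\rb)},\qquad |x-y|^{\rb}\le(\diam\Om)^{\rb}\le C,
\]
so that
\[
\Jc_2(x)\le C\int_{|x-y|>2\de}\frac{d(y)^{\rb-2}\om(d(y))}{|x-y|^{\Nb-1+\rb}}\,dy\le C'\frac{\om(\de)}{\de}
\]
by Lemma \ref{Lemma2} applied with $k=\rb\ge 1>0$ and $c_0=2$. Adding the two bounds yields the claim.

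The main (mild) obstacle is purely algebraic: matching the exponent of $|x-y|$ appearing in our integrand to the one supplied by Lemma \ref{Lemma2}. This is done by absorbing a factor $|x-y|^{\rb}$ into a constant using the boundedness of $\Om$, which is precisely where the standing hypothesis $\rb\ge 1$ plays its role (in concert with the same hypothesis ensuring $k>0$ on invoking Lemma \ref{Lemma2}, and ensuring $\de^{\rb}$ is a bounded prefactor in the estimate of $\Jc_1$).
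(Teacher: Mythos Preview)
Your proof is correct. The paper takes a different route: it repeats the dyadic-annulus argument of Lemma \ref{Lemma2} from scratch, splitting $\{|x-y|>\de\}$ into annuli $\Ac_j=B_{2^j\de}(x)\setminus B_{2^{j-1}\de}(x)$, bounding $|x-y|^{1-\Nb}$ by $(2^{j}\de)^{1-\Nb}$ on each, applying Corollary \ref{Cor.L2} at scale $2^{j}\de$, and summing. Your approach is more modular: you handle the single near annulus $\{\de<|x-y|\le 2\de\}$ by Corollary \ref{Cor.L2} directly, and for the far piece you avoid re-doing the dyadic sum by the exponent trick $|x-y|^{1-\Nb}=|x-y|^{\rb}\cdot|x-y|^{-(\Nb-1+\rb)}$, absorbing $|x-y|^{\rb}\le(\diam\Om)^{\rb}$ into the constant and invoking Lemma \ref{Lemma2} with $k=\rb$. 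This is shorter and makes the role of the hypothesis $\rb\ge 1$ very transparent (it is needed both for $k>0$ in Lemma \ref{Lemma2} and for $\de^{\rb}$ to be bounded). Both proofs implicitly use that the integration is confined to the bounded domain $\Om$; without this the integral in \eqref{eq.Lem4} need not converge when $\rb\ge 1$.
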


\subsection{Estimates of derivatives of the Green function}\label{Sect2.3}
Here we present some results on the Green function and its derivatives.
  
 Let $\Lc=\Lc(x,\partial_x)$ be a uniformly elliptic order $2$
operator in a ball  $B_R$ of radius $R\le1$, with Dirichlet boundary conditions and $G(x,y)\equiv G^{R}(x,y)$ be 
the Green function for $\Lc$ in $B_R$. 

Estimates of the first type concern the unit ball,  $ R=1,$ $B=B_1$, $G=G^{\circ},$ and we need them for 
\emph{\emph{all}} points $(x,y)\in(\bar{B}\times  \bar{B})$, $x\ne y$. Our  interest lies in 
estimating the singularity of the derivatives of $G(x,y)$ as $x$ is close to $y$.
Such estimates for $G(x,y)$ and  derivatives $G_x(x,y)$, $G_y(x,y)$, 
$G_{xy}(x,y)$ were established in \cite{Widman}, under rather weak restrictions imposed on the 
coefficients, namely, if $\ab(x)\in L_{\infty}(B)$ and is Dini continuous. Then
\begin{equation}\label{WidmanEst}
  |\partial^\a_x\partial^\be_y G(x,y)|\le C|x-y|^{-\Nb + 2-|\a|-|\be|},
\end{equation}
for $|\a|, |\be|\le 1$, with constant $C$ depending on the norms of coefficients in the Dini 
class and on the ellipticity constant.

Estimates of derivatives of higher order require more regularity of coefficients. We need them 
only for derivatives $\partial_x^{\a}\partial_y^{\be}G(x,y),$ $|\be|\le 1$.  We cite here the result by Yu.Krasovskii, tailored 
for our particular case, \cite{Kras.Sing}, see Theorem 3.3. and its corollary.
\begin{thm}\label{Kras.thm}Let the coefficients $\ab(x)$ belong to $C^m(\bar{B}),$ $m\ge 3$. 
Then
\begin{equation}\label{Kras.est}
  |\partial_x^{\a} \partial_x^{\be}G(x,y)|\le C |x-y|^{-\Nb+2-|\a|-|\be|}, \, |\a|<m-1,
\end{equation}
with constant $C$ depending on $C^m$-norm of coefficients and the ellipticity constant.
\end{thm}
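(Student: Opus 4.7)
The plan is to combine the classical pointwise Gr\"uter--Widman bound $|G(x,y)|\le C|x-y|^{2-\Nb}$ with rescaled interior Schauder estimates applied to $G(\cdot,y)$ as an $\Lc$-harmonic function off the diagonal, and then to transfer $y$-derivatives via the symmetry $G(x,y)=G(y,x)$ of the Green function of a self-adjoint operator with Dirichlet data.

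First, fix $y\in B$, set $v(x):=G(x,y)$, and note that $\Lc_x v=0$ on $B\setminus\{y\}$. For $x_0\ne y$, put $r:=\tfrac12|x_0-y|$, so that $v$ is $\Lc$-harmonic on $B_r(x_0)$. Rescaling this ball to the unit ball and invoking the standard interior $C^{k,\a}$ Schauder estimate for solutions of $\Lc v=0$ (with constant depending only on the ellipticity constant and on $\|\ab\|_{C^{k-1,\a}}$) yields
\begin{equation*}
\|\nabla_x^k v\|_{L^\infty(B_{r/2}(x_0))}\le \frac{C}{r^k}\|v\|_{L^\infty(B_r(x_0))}\le C\,r^{2-\Nb-k},
\end{equation*}
in the range of $k$ admitted by the regularity of $\ab$, where the last step uses the Gr\"uter--Widman bound and $|x-y|\asymp r$ on $B_r(x_0)$. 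Evaluation at $x=x_0$ gives $|\partial_x^\a G(x_0,y)|\le C|x_0-y|^{2-\Nb-|\a|}$. For the mixed bound with $|\be|\le 1$, I would first swap the roles of $x$ and $y$ by symmetry to obtain the analogous estimate for $\partial_y^\be G(x,y)$. Fix $y$ again and set $w(x):=\partial_y^\be G(x,y)$; differentiating the identity $\Lc_x G(\cdot,y)=\de_y$ in $y$ and commuting $\partial_y^\be$ past the $x$-operator $\Lc_x$ shows $\Lc_x w=0$ on $B\setminus\{y\}$. A second application of rescaled Schauder to $w$ on $B_r(x_0)$, with input $\|w\|_{L^\infty(B_r(x_0))}\le Cr^{2-\Nb-|\be|}$ from the symmetric bound, then delivers the desired
\begin{equation*}
|\partial_x^\a \partial_y^\be G(x_0,y)|\le C|x_0-y|^{2-\Nb-|\a|-|\be|}.
\end{equation*}

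The main obstacle is twofold. On the one hand, each successive differentiation in $x$ of the equation $\Lc v=0$ generates commutator terms involving one more derivative of $\ab$, so $\ab\in C^m$ caps the usable order at roughly $m-1$; this matches the sharp threshold $|\a|<m-1$ in the statement. On the other hand, and more delicately, the Schauder constants must be uniform in $x_0$ even when $x_0$ is close to $\partial B$: there the ball $B_r(x_0)$ need not lie in $B$, and one must either invoke an up-to-the-boundary Schauder estimate (exploiting that $v$, and hence $w$, vanishes on $\partial B$) or use a reflection/localization argument. Carrying out this boundary analysis uniformly in $y$ is precisely the technical core of Krasovskii's proof in \cite{Kras.Sing}, and is the step I would expect to cost the most effort.
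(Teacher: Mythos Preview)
The paper does not actually prove this theorem. Immediately before the statement the authors write that they ``cite here the result by Yu.~Krasovskii, tailored for our particular case, \cite{Kras.Sing}, see Theorem 3.3 and its corollary,'' and no proof is given anywhere in the paper (including the appendix, which only treats the $\vs$-dependence estimates of Theorem~\ref{Th.A1}). So there is no ``paper's own proof'' to compare against: the result is taken as a black box from the literature.

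Your sketch is nonetheless a reasonable outline of the classical route to such estimates: Gr\"uter--Widman for the baseline bound, rescaled interior Schauder for the $x$-derivatives, symmetry to pass to $y$-derivatives, and then a second Schauder pass for the mixed ones. You have also correctly isolated the genuine difficulty: the interior argument fails when $B_{r}(x_0)\not\subset B$, i.e.\ when $\dist(x_0,\partial B)<\tfrac12|x_0-y|$, and making the constant uniform up to the boundary requires the boundary Schauder/potential-theoretic machinery that is precisely the content of Krasovskii's paper. Since the present paper simply invokes \cite{Kras.Sing} rather than reproving it, your proposal is not in conflict with anything here; it just goes further than the authors do. (Minor note: the displayed inequality in the statement has the typo $\partial_x^\be$ for $\partial_y^\be$; your reading with $|\be|\le1$ matches the surrounding text.)
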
 
We will also need  such estimates of derivatives for the Green function $G^{R}(x,y)$ in a ball $B_{R}$ with radius $R<1$.
\begin{cor}\label{cor.Kras} Under the conditions of Theorem \ref{Kras.thm}, for a ball $B_{R}$, $R<1$ estimate \eqref{Kras.est} holds for the Green function $G^R(x,y)$ in the ball $B_R,$ with constant with constant $C$ depending on $C^m$-norm of coefficients and the ellipticity constant, but not depending on $R$.
\end{cor}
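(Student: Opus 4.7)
The natural approach is a scaling reduction to the unit ball case already handled by Krasovskii's Theorem \ref{Kras.thm}. Set $\tT = x/R$, $\tilde y = y/R$, and define the rescaled coefficients $\tilde a_{\jt\jt'}(\tT) := a_{\jt\jt'}(R\tT)$ on $\bar B_1$, with the corresponding rescaled operator $\tilde\Lc$. The goal is to express $G^R$ in terms of the Green function $\tilde G$ of $\tilde \Lc$ on $B_1$ and then transfer the estimate \eqref{Kras.est} through the change of variables.

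First I would verify the scaling identity. Starting from $\Lc u = f$ on $B_R$ with $u|_{\partial B_R}=0$ and substituting $u(x)=U(x/R)$, the chain rule yields $\Lc u(x) = R^{-2}\tilde\Lc U(\tT)$, so that $\tilde \Lc U = R^2 f(R\cdot)$ in $B_1$ with $U|_{\partial B_1}=0$. Writing $u(x)=\int_{B_R}G^R(x,y)f(y)\,dy$, changing variables $y=R\tilde y$, and comparing with $U(\tT)=\int_{B_1}\tilde G(\tT,\tilde y)\,R^2 f(R\tilde y)\,d\tilde y$, I obtain the identity
\begin{equation*}
  G^R(x,y) = R^{2-\Nb}\,\tilde G(x/R,y/R),\qquad x,y\in B_R.
\end{equation*}
Differentiating $|\a|+|\be|$ times produces the factor $R^{-|\a|-|\be|}$, so
\begin{equation*}
  \partial_x^{\a}\partial_y^{\be}G^R(x,y) = R^{2-\Nb-|\a|-|\be|}\,(\partial_{\tT}^{\a}\partial_{\tilde y}^{\be}\tilde G)(x/R,y/R).
\end{equation*}

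Next, I would control the constants appearing when Theorem \ref{Kras.thm} is applied to $\tilde\Lc$ on $B_1$. Since $\partial^\g \tilde a_{\jt\jt'}(\tT) = R^{|\g|}(\partial^\g a_{\jt\jt'})(R\tT)$ and $R\le 1$, one has
\begin{equation*}
  \|\tilde\ab\|_{C^m(\bar B_1)}\le \|\ab\|_{C^m(\bar B_1)},
\end{equation*}
and the ellipticity constant of $\tilde\ab$ equals that of $\ab$ on $B_R\subset B_1$, hence is at least the ellipticity constant on $B_1$. Therefore \eqref{Kras.est} for $\tilde G$ holds with a constant $C$ independent of $R$, giving
\begin{equation*}
  |(\partial_{\tT}^{\a}\partial_{\tilde y}^{\be}\tilde G)(x/R,y/R)|\le C\,|x/R-y/R|^{-\Nb+2-|\a|-|\be|} = C\,R^{\Nb-2+|\a|+|\be|}|x-y|^{-\Nb+2-|\a|-|\be|}.
\end{equation*}
Combining this with the differentiated scaling identity, the powers of $R$ cancel exactly, yielding \eqref{Kras.est} for $G^R$ with the same constant $C$, which depends only on $\|\ab\|_{C^m}$ and the ellipticity constant.

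The argument is essentially bookkeeping, and I do not expect any genuine obstacle; the only point that requires care is the observation that $R\le 1$ makes the rescaling of the coefficients benign (derivatives acquire harmless factors $R^{|\g|}\le 1$), whereas for $R>1$ this would fail. The case $|\be|=1$ in the statement is covered automatically once the inequality is established for $|\a|<m-1$ and $|\be|\le 1$, exactly matching the range allowed by Theorem \ref{Kras.thm}.
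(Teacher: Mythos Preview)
Your proposal is correct and follows essentially the same scaling argument as the paper: dilate $x\mapsto x/R$ to the unit ball, observe that the $C^m$-norms of the rescaled coefficients do not increase since $R\le 1$, derive the identity $G^R(x,y)=R^{2-\Nb}\tilde G(x/R,y/R)$, and check that the powers of $R$ cancel after differentiation. You have simply filled in the bookkeeping that the paper leaves to the reader.
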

\begin{proof} Having an operator $\Lc$ in the ball $B_R$, say, 
with center at the origin,  we make a dilation $\xb=R^{-1}x$ to 
the unit ball $B_1$. Under this dilation, the $C^0$-norm of a 
derivative of order $\a$ in $x$ or $y$ gains the factor $R^{|\a|}<1.$ 
Thus, the 
$C^{m}$ norms of the coefficients do not grow. As  follows from 
the chain rule, the Green function in $x$ variables is $R^{2-\Nb}$ 
times the transformed Green function of the dilated operator in 
$\xb$ variables,
\begin{equation*}
  G^R(x,y)=R^{2-\Nb}G^{\circ}(\xb,\yb).
\end{equation*}
and the estimate for the Green function   follows from the estimate in the unit ball. The same dilation takes care of derivatives of the Green function.
\end{proof}

The second type of results concerns the Green function in a ball $B_R$ with radius $R\le 1$ 
for the points $x,y$ \emph{well separated}, namely, the point $x$ lies in a small neighborhood of the 
centerpoint $\mathbb{0}$ of the ball, while $y$ lies in a neighborhood of some  boundary point 
$y^{\circ}$. The operator $\Lc$ depends on an additional parameter $\vs$ in a neighborhood of the point
$\vs_0=\mathbb{0}\in \R^\Nb$, and we need estimates for the derivatives 
   
   \begin{equation}\label{A.unit ball}
     |G_x|, \, |G_y|,\, |G_{xy}|,\,  |G_{x\vs}|,\, |G_{xy\vs}|,\, 
     |G_{xx\vs}|, |G_{yy\vs}|, \,|G_{x\vs\vs}|, \, |G_{xxy}|\le C,
   \end{equation}
   for such $x,y$, uniform in these variables and for $\vs=0$.

Again, we start with the unit ball , $R=1$, and the Green 
function,  denoted $G^{\circ}(x,y;\vs).$ 

 \begin{thm}\label{Th.A1}Suppose that the operator $\Lc(\vs)=-\nabla (\ab(x,\vs)\nabla)$ is 
 uniformly elliptic and its coefficients satisfy
 $\partial_x^{\a}\partial_\vs^{\g}\ab(x,\vs)\le c_0,$ for $|\a|+|\g|\le 3.$  Then for  $x$ near 
 $\mathbb{0}$, $y$ near $y^{\circ}\in \partial B_1$, $|x-y|>\frac12$, the derivatives of 
 the Green function $G^{\circ}(x,y,\vs)$, satisfy \eqref{A.unit ball},
 with constant $C$ depending on the norm of the coefficients in the above spaces and on the 
 ellipticity constant.
 \end{thm}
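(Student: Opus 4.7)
The plan is to combine the singular diagonal estimates already cited (Widman, Krasovskii) with a parameter differentiation scheme in $\vs$ and standard boundary Schauder theory near $y^{\circ}$.

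In the regime $|x-y| > 1/2$, at fixed $\vs$, the Krasovskii bound of Theorem \ref{Kras.thm} (applied with $m=3$) gives $|\partial_x^\a \partial_y^\be G^{\circ}(x,y,\vs)| \le C$ for $|\a|+|\be| \le 3$ with $|\be| \le 1$, since the factor $|x-y|^{2-\Nb-|\a|-|\be|}$ is uniformly bounded there. This handles $G_x$, $G_{xy}$, $G_{xxy}$ directly, and Widman's estimate \eqref{WidmanEst} together with the fact that $\Lc$ is formally self-adjoint in $y$ yields the same for $G_y$ (with boundary behavior controlled via boundary Schauder, see below). Uniformity in $\vs$ follows because the relevant norms of the coefficients are uniformly controlled by hypothesis.

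To get the $\vs$-derivatives, I differentiate the defining equation. Since $\Lc(\vs)_x G^{\circ}(\cdot,y,\vs) = \delta_y$ in $B_1$ with zero Dirichlet data, away from the diagonal
\begin{equation*}
  \Lc(\vs)_x \,\partial_{\vs_i}G^{\circ}(\cdot,y,\vs) = \nabla_x \cdot\bigl(\partial_{\vs_i}\ab(x,\vs)\,\nabla_x G^{\circ}(\cdot,y,\vs)\bigr),
\end{equation*}
with $\partial_{\vs_i}G^{\circ} = 0$ on $\partial B_1$. Rigorously, $\partial_\vs G^{\circ}$ is obtained as the strong limit of difference quotients, using the $C^3$-regularity of the coefficients in $\vs$; this limit is the unique solution of the above Dirichlet problem. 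The right-hand side is a bounded (indeed, $C^1$) function away from $y$ thanks to the baseline step. Interior Schauder estimates on $B_{1/8}(x)$ then give the bounds on $G^{\circ}_{x\vs}$, $G^{\circ}_{xx\vs}$ and (after interchanging a $y$-derivative with $\partial_\vs$, permitted by the smoothness of the difference quotient scheme) on $G^{\circ}_{xy\vs}$. A second $\vs$-differentiation produces an analogous equation whose forcing involves $\partial_\vs^2 \ab$ and lower-order derivatives already controlled, yielding $G^{\circ}_{x\vs\vs}$.

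The principal obstacle is regularity in $y$ when $y$ lies near or on $\partial B_1$: here we cannot invoke purely interior estimates, so the bounds $|G^{\circ}_{y}|$, $|G^{\circ}_{yy\vs}|$ require boundary Schauder theory. The fix is standard: in a neighborhood of $y^{\circ}$ the boundary $\partial B_1$ is $C^\infty$, so we flatten locally to a half-ball and apply boundary Schauder estimates to the elliptic equation in $y$ that $G^{\circ}(x,\cdot,\vs)$ (and its $\vs$-derivative) satisfies, with zero Dirichlet data inherited at each step of $\vs$-differentiation. All intervening constants depend only on the $C^3$-bound on $\ab(x,\vs)$ in $(x,\vs)$ and on the ellipticity constant, matching the statement. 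The technical heart of the proof is thus the careful execution of this boundary Schauder step combined with the commutation of $\partial_\vs$ with the Green operator, which is naturally deferred to the Appendix.
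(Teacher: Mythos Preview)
Your overall strategy---differentiate the equation in $\vs$ and then bootstrap via interior and boundary Schauder estimates---matches the paper's second step exactly, and your handling of the pure $x,y$-derivatives via Krasovskii and of the $y$-regularity via boundary Schauder near $y^{\circ}$ is fine.

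There is, however, a real gap at the point where you first invoke interior Schauder for $u_1:=\partial_\vs G^{\circ}(\cdot,y,\vs)$. The interior Schauder estimate on $B_{1/8}(x)$ reads
\[
\pmb{|}u_1\pmb{|}_{2+\g,\,B_{1/16}(x)}\le C\bigl(\pmb{|}f_1\pmb{|}_{\g,\,B_{1/8}(x)}+\|u_1\|_{C(B_{1/8}(x))}\bigr),
\]
so you need an a priori bound on $\|u_1\|_{C(B_{1/8}(x))}$ \emph{before} you can extract anything. Your difference-quotient sentence does not deliver this: each quotient $(G(x,y,\vs+h)-G(x,y,\vs))/h$ is only bounded by $2C/|h|$, and the assertion that the limit ``is the unique solution of the above Dirichlet problem'' in $B_1$ is problematic because the forcing $\nabla_x\!\cdot(\partial_\vs\ab\,\nabla_x G^{\circ}(\cdot,y,\vs))$ is not integrable near $z=y$ (it behaves like $|z-y|^{-\Nb}$), so classical well-posedness and the maximum principle do not directly yield $u_1\in L^\infty$. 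The same issue recurs, more acutely, for $u_2:=\partial_\vs^2 G^{\circ}$.

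The paper fills this gap by an explicit kernel computation that you should insert as a first step. From the operator identity $\partial_\vs\Gb(\vs)=-\Gb(\vs)\,\Lc_\vs\,\Gb(\vs)$ one writes, after one integration by parts (boundary terms vanish by the Dirichlet condition),
\[
\partial_\vs G^{\circ}(x,y;\vs)=\int_{B_1}\bigl\langle \ab_\vs(z)\nabla_z G^{\circ}(x,z;\vs),\,\nabla_z G^{\circ}(z,y;\vs)\bigr\rangle\,dz,
\]
and then the Widman bound $|\nabla_z G^{\circ}|\le C|\,\cdot\,|^{1-\Nb}$ yields $|\partial_\vs G^{\circ}(x,y;\vs)|\le C|x-y|^{2-\Nb}\le C$. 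For $\partial_\vs^2 G^{\circ}$ the paper differentiates once more, obtains a double integral over $B_1\times B_1$, and splits it into three overlapping regions ($|z_1-z_2|$ large; $|z_1-z_2|$ small with $|z_1-x|$ large; $|z_1-z_2|$ small with $|z_2-y|$ large), integrating by parts differently in each to keep all singularities integrable. Only after these $L^\infty$ bounds on $u_1,u_2$ are in hand does the Schauder bootstrap you describe go through.
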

 The proof is presented in Appendix B.
 
 Again, a  simple consequence of Theorem \ref{Th.A1} is the following. 
 \begin{cor}\label{Cor.A1} Under the conditions of Theorem 
 \ref{Th.A1}, let $B_R\subset B_1$ be a ball with radius $R<1$, 
Then for the  Green function $G^R$ for the 
 operator $\Lc$ in $B_R$, the estimates hold. 

  \begin{gather}\label{A.R ball}
      |G^R_\vs(x,y;\vs)|, |G^R_{\vs\vs}(x,y;\vs)|\, \le 
     CR^{2-\Nb};\\\nonumber 
     |G^R_x(x,y;\vs)|, \, |G^R_y(x,y;\vs)|, \,|G^R_{x\vs}(x,y;\vs)|, \, |G^R_{x\vs\vs}(x,y;\vs)|\le 
     CR^{1-\Nb},\\\nonumber
     |G^R_{xy}(x,y;\vs)|,\, |G^R_{xy\vs}(x,y;\vs)|,\, |G^R_{xx\vs}(x,y;\vs)|\le C R^{-\Nb},\\\nonumber
      |G^R_{xyy}(x,y;\vs)|\le C R^{-1-\Nb},
   \end{gather}
   with constant $C$ determined by $C^3(B_R\times B_{\e})$- 
   norms of coefficients of the operator $\Lc$ and its ellipticity constant.
 \end{cor}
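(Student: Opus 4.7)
I would obtain this corollary from Theorem \ref{Th.A1} by exactly the same scaling argument that was used in Corollary \ref{cor.Kras}. Suppose $B_R$ is centered at the origin. The change of variables $\xb=R^{-1}x$, $\yb=R^{-1}y$ sends $B_R$ to the unit ball $B_1$ and produces there a uniformly elliptic operator $\widetilde\Lc(\vs)=-\nabla_{\xb}\cdot(\tilde\ab(\xb,\vs)\nabla_{\xb})$ with rescaled coefficients $\tilde\ab(\xb,\vs)=\ab(R\xb,\vs)$. Since $R\le 1$, every $\xb$-derivative of $\tilde\ab$ of order $\a$ gains a factor $R^{|\a|}\le 1$, so the $C^3$-norm of $\tilde\ab$ in the joint $(\xb,\vs)$-variables is controlled by that of $\ab$, and the ellipticity constant is preserved. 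In particular the hypotheses of Theorem \ref{Th.A1} are satisfied by $\widetilde\Lc(\vs)$ with constants independent of $R$.

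A direct computation from the distributional identity $\Lc_x G^R(\cdot,y;\vs)=\de_y$ (or equivalently the variational characterisation of the Green function) yields
\begin{equation*}
G^R(x,y;\vs)=R^{2-\Nb}\,\widetilde G(\xb,\yb;\vs),
\end{equation*}
where $\widetilde G$ is the Green function of $\widetilde\Lc(\vs)$ on $B_1$. The geometric condition that $x$ lies near the center of $B_R$ while $y$ is near a boundary point $y^{\circ}$ with $|x-y|>R/2$ translates exactly to $\xb$ near $\mathbb{0}$, $\yb$ near $\partial B_1$, and $|\xb-\yb|>\tfrac12$, which is the setting of Theorem \ref{Th.A1}. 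Consequently the bounds \eqref{A.unit ball} apply to $\widetilde G$ with a constant $C$ depending only on the $C^3$-norm of $\ab$ and on the ellipticity constant, but not on $R$.

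To conclude, I transport the estimates back via the chain rule. Each $x$- or $y$-differentiation of $G^R$ contributes a factor $R^{-1}$, whereas $\vs$-differentiation contributes no extra power of $R$. Combined with the prefactor $R^{2-\Nb}$, this delivers precisely the claimed decay rates: $R^{2-\Nb}$ for the pure $\vs$-derivatives $G^R_{\vs}$ and $G^R_{\vs\vs}$ (zero spatial derivatives); $R^{1-\Nb}$ for $G^R_x, G^R_y, G^R_{x\vs}, G^R_{x\vs\vs}$ (one spatial derivative); $R^{-\Nb}$ for $G^R_{xy}, G^R_{xy\vs}, G^R_{xx\vs}$ (two spatial derivatives); and $R^{-1-\Nb}$ for $G^R_{xyy}$ (three spatial derivatives). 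The whole argument is essentially bookkeeping, and the only delicate point is to verify that the $C^3$-norm of the dilated coefficients and the ellipticity constant really remain bounded uniformly in $R$; this is guaranteed precisely by the hypothesis $R\le 1$, so no genuine obstacle arises.
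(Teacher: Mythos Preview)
Your argument is correct and is exactly the approach the paper takes: the paper's proof of this corollary consists of the single sentence ``The proof repeats the above dilation reasoning,'' referring to the scaling argument of Corollary~\ref{cor.Kras}, which you have spelled out in full.
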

\begin{proof}The proof repeats  the above dilation reasoning.
\end{proof}

\section{The generating kernel and the 
extension}\label{Sect.K(x,y)}
\subsection{The extension operator}\label{Extension}
We are going to describe here the smooth extension to $\R^\Nb$ of a 
given 
function
 $f\in \Hc_{\Lc}^{\rb+\om}(\Kb)$  with control of 
 derivatives.
 
 Let $\Qc=\{Q\}$ be the Whitney decomposition into open cubes of 
 the set 
 $\Ct\Kb=\R^{\Nb}\setminus\Kb$. Recall 
 that this means that $\bigcup_{Q\in\Qc}\overline{Q}=\Ct\Kb$ and 
 different cubes in 
 $\Qc$  are disjoint. Moreover,
 if $\ab_Q$ is the center of the cube $Q\in\Qc$, 
 $\de(Q)=\dist(\ab_Q,\Kb)$, then 
 $\frac18\de(Q)\le \diam Q\le\frac14\de(Q)$.
 For $Q\in\Qc$, we denote by $x_Q\in\Kb$ (one of) the closest  to 
 $\ab_Q$
 point(s) in $\Kb$, $d(\ab_Q)=|x_Q-\ab_{Q}|,$ and for $y\in Q,$ $\de(Q)\le 2 \diam\Kb$, we define
\begin{equation}\label{f-tilde}
  \fT_0(y)=		\F_{x_Q,2\de(Q)}(y) 
\end{equation}
where $\F_{x_Q,2\de(Q)}(y)$ are functions involved in the 
definition of the class 
$\Hc_{\Lc}^{\rb+\om}(\Kb)$, see Definition \ref{defin.class}. If a point $y$ does not belong 
to any of $Q\in\Qb$ or it belongs to a far-lying cube, $\de(Q)> 2 \diam\Kb,$ we set $ 
\fT_0(y)=0.$
  The function $\fT_0,$ thus defined, is piecewise $\Lc$-harmonic, 
  however it may 
  be discontinuous along the boundaries
  of cubes in $\Qc$
  in an uncontrollable manner.  We use now the averaging kernel 
  $K(x,y)$, to be  
  constructed later in this section,
  and set
  \begin{equation}\label{function f}
   f_0(x)=\int_{\R^{\Nb}} \fT_0(y) K(x,y) dy.
  \end{equation}
We will show that this function is a continuous extension of $f$ 
to a neighborhood 
of $K,$
 with controlled behavior of derivatives when approaching $\Kb$; the function $f_0(x)$ will 
 later serve  for 
 constructing the required 
 approximation.
\subsection{Construction of the kernel 
$K$}\label{subs.construction K}  Our 
reasoning will be constructive. The first 
step will be  describing a proper averaging kernel. We denote  by 
$\dn(x)$ 
the \emph{regularized} distance from the point $x\in \Ct\Kb$ 
to $\Kb$, namely $\dn(x)\in C^3(\Ct\Kb)$, $c\,\dist(x,\Kb)\le 
\dn(x)\le 
c'\,\dist(x,\Kb),\, c'<\frac14,$ $|\grad^k\dn(x)|\le c 
\dn(x)^{1-k}, \, k=1,2,3.$ 
Let $\hb(t)$ be a function in 
$ C^{\infty}(\overline{\R_+}),$ $\hb(t)\ge0,$ $\supp 
\hb\subset[\frac12,1]$, normalized by $\int_0^1 \hb(t)dt=1$. The scaled 
function 
$h_r(t)=r^{-1}\hb(t/r)$ is normalized in such a way that 
$\int_0^\infty h_r(t)dt=1.$ 
Further on, for $x\in \Ct\Kb,\,t< r=\dn(x) $, we denote by 
$S_t(x)$ the sphere 
$\{y:|y-x|=t\}$, 
  and by $B_t(x)$ the corresponding open ball;
   they do not touch $\Kb$, moreover, they are on a controlled 
   distance from $\Kb$.

Next, for our elliptic operator
 \begin{equation}\label{operator}
 \Lc(x,\partial_x)=-\sum_{\jt,\jt'}\partial_\jt 
 a_{\jt\jt'}(x)\partial_{\jt'}\equiv -\nabla\cdot(\ab(x)\nabla)
 \end{equation}  
 in the unit ball $\Om\equiv B_1(\mathbb{O})$ containing $\Kb$, we  construct 
  the averaging kernel $K(x,y)$. It will 
    act as a replacement, for the operator $\Lc$, of the mean 
    value 
kernel, usual for the Laplacian: 
  namely, for any point $x\in \Ct\Kb$ and a function $\f(y)$, 
  $\Lc$-harmonic
  in the ball $B_r(x),$ $\f\in C(\overline{B_r(x)}) $, 
  $r=r(x)\le\dn(x)$,
   the following  representation  holds:
\begin{equation*}%\label{Int.repres}
  \f(x)=\int_{B_r(x)}K(x,y) \f(y) dy.
\end{equation*}

 The construction of $K(x,y)$ is performed in the following way.
 For a fixed point $z\in\Ct\Kb$, we 
denote by $G_{z,t}(x,y)$ the Green function for the Dirichlet 
problem for 
$\Lc$ in the ball $B_t(z),$ so, in this notation, variables 
$(z,t)$ in the subscript indicate the 
domain, the ball, 
where the Green function is considered, while, further, $(x,y)$ are the 
variables on 
which the value of the Green function depends. We are interested, especially, in $G_{x,t}(x,y)$, this means, the value of the Green function at the centerpoint of the sphere. In 
particular, we have
$G_{x,t}(x,y)=0$ on the sphere  $|y-x|=t$.  Since the operator 
$\Lc$ is Hermitian,
 we have $G_{x,t}(x,y)=G_{x,t}(y,x)$. 
 
 For $t\in[r/2,r]$, $y\in S_t(x)$, we denote 
by $\pmb{\pmb{\n}}_y(t)$ the external $\Lc$-conormal to the sphere 
$S_t(x)$ at the point $y\in 
S_t(x);$ namely, for an operator $\Lc$ in \eqref{operator},
 the components of the conormal 
 are 
$(\pmb{\n}_y(t))_{\jt}=\sum_{\jt'}a_{\jt,\jt'}(y)\nb_{\jt'}(y)$, where 
$\nb_{\jt'}(y)$ are the 
components of the Euclidean normal to $S_t(x)$  at the point $y$ 
(note that the 
conormal vector is not normalized).

By the classical representation formula for   solutions of 
elliptic equations, the 
following equation is valid
for a function $\f$, as long as $\f\in C(\overline{B_t(x)})$, $\Lc 
\f=0$
in ${B_t(x)}$,

\begin{equation}\label{Repr.sphere}
 \f(x)= 
 \int_{S_t(x)}\partial_{\pmb{\pmb{\n}}_y}G_{x,t}(x,y)\f(y)d 
 \s_t(y),
\end{equation}
where $d \s_t$ is the surface element on the sphere $S_t(x)$. 
We multiply \eqref{Repr.sphere} by $h_r(t),$ $r=\dn(x)$ and 
integrate in $t$.
By our normalization, the left-hand side in \eqref{Repr.sphere} 
remains  equal to
$\f(x)$ after integration,  while the right-hand side takes the 
form of the integral over the 
spherical annulus
$\Rc_r(x)=B_{r}(x)\setminus B_{\frac{r}2}(x)$:
\begin{equation}\label{Repr.ball}
  \f(x)=\int_{\Rc_r(x)} 
  h_r(|x-y|)\partial_{\pmb{\pmb{\n}}_y}G_{x,|x-y|}(x,y)\f(y)d y, \, r=d_0(x).
\end{equation}
It is the kernel in \eqref{Repr.ball} that  will be accepted as 
$K(x,y)$: we set 
 \begin{equation*}% \label{kernel}
K(x,y)=h_{\dn(x)}(|x-y|)\partial_{\pmb{\pmb{\n}}_y}G_{x,|x-y|}(x,y), 
\end{equation*}
 for $y\in \Rc_r(x) $, and  $ K(x,y)=0$ 
otherwise.

 As follows from   \cite{Widman}, the function $K(x,y)$ satisfies
 \begin{equation*} |K(x,y)|\le c 
\dn(x)^{-\Nb}, \, y\in\Rc_r(x). \end{equation*} 
 In the following, we will also need estimates for the derivatives 
of the kernel $K(x,y)$ in the same domain $y\in\Rc_r(x)$, namely,
 \begin{equation}\label{Deriv.K.1}
  |\nabla_x K(x,y)|\le c\dn(x)^{-\Nb-1},
\end{equation}
 and
  \begin{equation}\label{Deriv.K.2}
  |\nabla^2_x K(x,y)|\le c\dn(x)^{-\Nb-2}.
\end{equation}
These estimates are established further on.
\subsection{Estimates of derivatives of $K(x,y)$. First order 
derivatives}\label{subsect.Kx}
 Since the  dependence 
of $K$ on $x$ is 
rather 
implicit, the calculations of its derivatives are rather cumbersome and 
involve multiple applications of the chain rule. We would like to explain at this point, somewhat informally, how this calculation goes through and which 
derivatives of the Green function may appear;  this facilitates better understanding of rigorous 
reasoning afterwards.

The easiest term to handle is the factor $h_{\dn(x)}(|x-y|)$; its derivatives are calculated directly. More trouble is 
caused by the factor $\partial_{\pmb{\pmb{\n}}_y}G_{x,|x-y|}(x,y)$. First, note that when the position of the point $x$ 
changes infinitesimally, the sphere $\partial B_{|x-y|}(x)$ has its center moving, the sphere still passing through $y$,  
therefore the  conormal vector ${\pmb{\pmb{\n}}_y}$ changes its length and direction, and in evaluating this change, the 
derivatives of $G$ in $x$ and $y$ appear. Next, the size of the ball $B_{|x-y|}(x)$ changes, and this change   should also be
taken into account when differentiating in $x$. Finally, we must notice that when the point $x$ moves, the operator 
$\Lc$ changes. To see this, we take an infinitesimally moved point $\x=\x(\vs)$, the  new position of $x$,  $\x(0)=x$,  and consider a 
linear conformal transformation $\Tt(\vs)=\Eb+O(\vs)$ in $\R^{\Nb}$ mapping the 'old' ball $B^{(0)}= B_{|x-y|}(x)$ onto the 'new' 
ball $B^{(\vs)}= B_{|\x-y|}(\x)$. Under this transformation, the operator $\Lc$ in the new ball $B^{(\vs)}$ transforms 
into a new operator   $\Lc(\vs)$  in the initial ball $B^{(0)}$ with coefficients $\ab(x,\vs)=\ab(\Tt(\vs)x)$, depending 
on the parameter $\vs$; in other words, instead of considering the 'old operator $\Lc$' in a 'new ball', we consider here the new operator $\Lc(\vs)$ in the old ball. Therefore, when we differentiate the Green function in $x$, we need to keep in mind this change of the 
operator and thus, for a fixed ball, the derivatives $\partial_{\vs} G(x,y;\vs)$,  $\partial_{\vs}\partial_y 
G(x,y;\vs)$ and  $\partial_{\vs}\partial_x 
G(x,y;\vs)$ arise, where $G(x,y;\vs)$ is the Green function of the operator $\Lc(\vs)$ in $B^{(0)}$.

 Now we present rigorous calculations rendering concrete these hints.

   We start with evaluating derivatives of an 
important 
auxiliary vector-function. 
$\mb(x,z)=(x-z)\dn(x)^{-1}, $ for $x,z\in\Ct\Kb,$ under the 
condition 
$|x-z|\le\dn(x).$   

First, of course,  $\nabla_z \mb(x,z)=\pmb{\Et}\dn(x)^{-1},$ 
therefore, $|\nabla_z 
\mb(x,z)|=\Nb^{\frac12}\dn(x)^{-1}$.  The calculation of 
the 
$x$-derivative is a little bit 
more laborious: 
 \begin{gather*}
  \nabla_{x}\mb(x,z)= 
  (\dn(x)^{-1})_x'\otimes (x-z)+\dn(x)^{-1}\pmb{\Et}=\\\nonumber
  \dn(x)^{-2}\nabla_{x}\dn(x)\otimes(z-x)+\dn(x)^{-1}\pmb{\Et},
\end{gather*} therefore, \begin{equation}\label{Imp.Estim} 
  |\nabla_x \mb(x,z)|\le c \dn(x)^{-1}\le c'd(x)^{-1},
\end{equation} 
and, similarly,

\begin{equation}\label{m_zz}
  |\nabla^2_{xx}\mb(x,z)|\le c d(x)^{-2}.
\end{equation}

Now we  estimate
 first order derivatives of the function $K(x,y) $ with 
 respect to the $x$-variable.
 We fix a point  $x\in \Ct\Kb$ 
and consider $\x$ in the small ball $ B_{\dn(x)/4}(x),$  $y\in 
S_t(x)$ for 
$t=|x-y|$. The function $K(\x,y)$ depends on $\x$ in a complicated way; this dependence involves 
the change of the regularized  distance $d_0(\x)$, which leads to the change of the radius of 
the ball,  and also change of  the direction of the normal and conormal vectors at $y$ as $\x$ 
moves. Thus we encounter many terms requiring detailed analysis. 

 By the definition of $K(\x,y),$ we have 
\begin{gather}\label{Deriv.K.1.1} 
  K'_\x(\x,y)=(h_{\dn(\x)}(|\x-y|))'_{\x} 
  \partial_{\pmb{\pmb{\n}}_y(t)}G_{\x,|\x-y|}(\x,y)+\\\nonumber
  h_{\dn(\x)}(|\x-y|)(\partial_{\pmb{\pmb{\n}}_y(t)}G_{\x,|\x-y|}(\x,y))'_{\x}.
\end{gather} As follows from  \eqref{Imp.Estim}  and the obvious 
estimate 
\begin{equation}\label{Deriv.K.2'}
\left|(h_{\dn(\x)}(|\x-y|))'_{\x}\right|\le c \dn(\x)^{-2},
\end{equation}
 for the first term in 
\eqref{Deriv.K.1.1}, we have \begin{equation}\label{Deriv.K.2.1} 
  \left|(h_{\dn(\x)}(|\x-y|))'_\x 
  \partial_{\pmb{\pmb{\n}}_y(t)}G_{\x,|\x-y|}(\x,y)\right|\le c 
  \dn(\x)^{-\Nb-1}.
\end{equation}Next we consider the second term in 
\eqref{Deriv.K.1.1}. As explained earlier,  while 
the point $x$, the center of the sphere, moves to the position $\x$, 
under such 
movement, the point $y$ remains at its place, but the sphere 
$S_{|x-y|}(x)$ moves to the new position $S_{|\x-y|}(\x)$, and this causes the change of the direction 
of the tangent plane at $y$ and thus of the conormal vector
$\pmb{\pmb{\n}}_y$. 

To evaluate this change of direction,  when calculating  derivatives 
with 
respect to $\xi$ variable,  it is convenient to parametrize  the point $\x$ in a special way. We 
need some elementary geometry here.  Denote by $\pmb{\Sc}$ the unit sphere 
with 
center at the origin $\mathbb{0}$, consider the point $\la_0=\frac{x-y}{|x-y|}\in \pmb{\Sc}$, and let 
 $\pmb{\om}$ be a neighborhood of $\la_0$ in $\pmb{\Sc}:$ 
$\pmb{\om}=\{\la=\frac{\xi-y}{|\xi-y|}: 
|\xi-x|\le \frac14\dn(x)\}.$ 

We set ${\ta}=|\xi-y|$  and consider the change of the operators 
of the conormal 
derivative, $\tilde{\partial}=\partial_{\pmb{\pmb{\n}}_y({\ta})}$, 
as $\x$ moves. The conormal 
derivatives 
can be understood as directional derivatives along the vectors 
$\pmb{\pmb{\n}}_y(t),$ not 
necessarily the unit ones. Since the position of the sphere 
$S_t(\xi)$ is 
determined by the parameter 
${{\varsigma}}=\frac{\x-y}{d_0(\x)}\in\pmb{\om}$, we 
include this parameter into 
the notation, suppressing temporarily $t$: 
$\pmb{\pmb{\n}}(y,{\varsigma}):=\pmb{\pmb{\n}}_y(t)$. In the lucky case, when 
$\pmb{\pmb{\n}}(y,{\varsigma})\|\pmb{\pmb{\n}}(y,\la)$, the 
reasoning is quite simple since there is no change of the 
direction of the conormal vector. We 
consider the 
generic case  
$\pmb{\pmb{\n}}(y,{\varsigma})\nparallel\pmb{\pmb{\n}}(y,\la).$ 
Let us draw the 
two-dimensional plane $\prod$ through the point $y$ and  the 
vectors  
$\pmb{\pmb{\n}}(y,{\varsigma}),\pmb{\pmb{\n}}(y,\la)$. This plane 
cuts the sphere $S_{{\ta}}(x)$ along 
the 
circle that we denote by $\vp$. We  consider the intersection 
$\prod\cap 
B_{{t}}(\xi)$.
 It is a two-dimensional disk.

Denote by $\n_{\x,y}$ the unit interior normal vector (the 
Euclidean one, note the font difference!) to $\vp$.  
By definition, the Green function $G_{\xi,|\xi-y|}(\xi,y)$ 
vanishes for $y\in 
S_{{\ta}}(\xi)$, in particular, on $\vp$. 
Therefore, 
\begin{gather}\label{der1}
  \partial_{\pmb{\pmb{\n}}(y,\la)}G_{\xi,|\xi-y|}(\xi,y)=\\\nonumber
  |\pmb{\pmb{\n}}(y,{\la})|^{-1}\partial_{\n_{\xi,y}}G_{\xi,|\xi-y|}(\xi,y)\cos(\widehat{{\n}_{\xi,y},\pmb{\pmb{\n}}(y,\la)}),
\end{gather}
and 
\begin{gather}\label{der2}
  \partial_{\pmb{\pmb{\n}}(y,{\varsigma})}G_{\xi,|\xi-y|}(\xi,y)=\\\nonumber
  |\pmb{\pmb{\n}}(y,{\varsigma})|^{-1}G_{\xi,|\xi-y|}(\xi,y)\cos(\widehat{\n_{\xi,y},\pmb{\pmb{\n}}(y,\la)}),
\end{gather}
where $\widehat{\n,\n'}$ denotes the angle between the vectors 
$\n,\n'$.
Therefore, the ratio of the derivatives in \eqref{der1} and 
\eqref{der2}, which we 
denote by 
$\m(y,\la,{\varsigma})$, satisfies
\begin{equation}\label{6}
 \m(y,\la,\la)=1,\, |\nabla_{{\varsigma}}\m(y,\la,{\varsigma})|\le 
 C. 
\end{equation}
Now we are able to conclude the calculation of the change of the 
kernel $K$ under
 the change of the point $x$, the center of the ball.
Having $\xi$ in a neighbourhood of $x$, due to the definition of 
the kernel $K$, we 
have
\begin{gather}\label{7}
  \partial_{\xi}K(\xi,y) =\partial_{\xi}[h_{\dn(\xi)}
  (|\xi-y|)\partial_{\pmb{\pmb{\n}}(y,{\varsigma})}G_{\xi,|\xi-y|}(\xi,y)]=
  \\\nonumber
  \partial_{\xi}(h_{\dn(\xi)})(|\xi-y|))\partial_{\pmb{\pmb{\n}}(y,{\varsigma})}G_{\xi,|\xi-y|}(\xi,y)+\\\nonumber
   h_{\dn(\xi)}\partial_{\xi}\left(\partial_{\pmb{\pmb{\n}}(y,{\varsigma})}G_{\xi,|\xi-y|}(\xi,y)\right)\equiv I_1+I_2.
\end{gather}
For the first term in \eqref{7}, we use our estimate 
\eqref{Deriv.K.2'} for the 
derivative of $h$,
\begin{equation*}
  |\partial_{\xi}(h_{\dn(\xi)}(|\xi-y|))|\le c \dn(x)^{-2}.
\end{equation*}
For the derivatives of the Green function, in the first term on the 
right in 
\eqref{7}, we use
   estimate \eqref{A.R ball}, which gives 
  $|\partial_{\pmb{\pmb{\n}}(y,{\varsigma})}G(\xi, y)|\le c 
  \dn(x)^{1-\Nb}$,
   Therefore, for the first term $I_1$ in \eqref{7}, we have
  \begin{equation}\label{8}
    |I_1|\le C \dn(x)^{-1-\Nb}.
  \end{equation}
 Next, we estimate $I_2$.  For the last factor, i.e., for the 
  second derivative of 
  the Green function,
  we have
  \begin{gather}\label{9}
    \partial_{\xi}(\partial_{\pmb{\pmb{\n}}(y,{\varsigma})} 
    G_{\xi,|\xi-y|}(\xi,y))=\\\nonumber
    \partial_{\xi}\left[\m(y,\la,{\varsigma})|_{{}_{{\varsigma}=
    \frac{\xi-y}{\dn(x)}}}\partial_{\pmb{\pmb{\n}}(y,\la)}G_{\xi,|\xi-y|}(\xi,y)\right]=\\\nonumber
    =\partial_{\xi}\m(y,\la,{\varsigma})|_{{}_{{\varsigma}
    =\frac{\xi-y}{\dn(x)}}}\partial_{\pmb{\pmb{\n}}(y,\la)}G_{\xi,|\xi-y|}(\xi,y)+\m(y,\la,{\varsigma})\partial_{\xi}\partial_{\pmb{\pmb{\n}}(y,\la)}G_{\xi,|\xi-y|}(\xi,y)=\\\nonumber
    I_3+I_4.
  \end{gather}
  The term $I_3$ is again estimated using \eqref{Deriv.K.1.1} and 
  the  bounds for 
  the first
   and second order derivatives of the Green function in \eqref{WidmanEst}.
  This gives   
  \begin{equation*}
    \left|\partial_{{\varsigma}}\m(y,\la,{\varsigma})\partial_{\x}\left[\frac{\xi-y}{\dn(\x)}\right]\right|\le 
    c\dn(x)^{-1},
  \end{equation*}
  and, therefore, 
  \begin{gather}\label{9'}
    \left|h_{\dn(\x)}(|\xi-y|)\partial_{{\varsigma}}\m(y,\la,{\varsigma})
    \partial_{\x}\left[\frac{\xi-y}{\dn(\x)}\right]\partial_{\pmb{\pmb{\n}}(y,\la)}G_{\xi,|\xi-y|}(\xi,y)\right|\le\\\nonumber 
    c \dn(x)^{-1}\cdot\dn(x)^{-1}\cdot     
    \dn(x)^{1-\Nb}=c\dn(x)^{-1-\Nb}.
  \end{gather}
  The most troublesome is the evaluation of the term $I_4$, since 
  it requires 
  tracing  the behavior of derivatives
   of the Green function under a rotation of the co-ordinates 
   system.
   
   Consider the family of linear conformal mappings $\y\mapsto \Tt({\varsigma})\y$ 
   in 
   $\R^{\Nb}$, 
   transforming $x-y$ to $\xi-y$, depending smoothly on 
   $\varsigma$
    (the parameter ${\varsigma}$ is concealed in $\xi$),
   \begin{equation*}
     \Tt({\varsigma})=\frac{|{\varsigma}|}{|\la|}\Bt({\varsigma}),
   \end{equation*}
   where $\Bt({\varsigma})$ is an orthogonal transformation 
   depending smoothly on 
   ${\varsigma}$, with $\mathrm{T}(\la)=\Et$.
   Next, we denote by $g(u,v,{\varsigma})$ the rotated Green 
   function, namely, its 
   image under the transformation $\Tt({\varsigma})$
   of the ball $B(x,t)$ to the ball $B(\xi,{\ta})$:
   \begin{gather}\label{10}
     g(u,v,{\varsigma}):=\\\nonumber 
     G_{\Tt\left({\varsigma}\right)(x-y)+y,|{\varsigma}|\dn\left(\Tt({\varsigma})(x-y)+y\right)}(\Tt({\varsigma})(v-y)+y, 
     \Tt({\varsigma})(u-y)+y).
   \end{gather}
   It follows from \eqref{10} that the  coefficients 
   $a_{\jt,\jt'}(.,{\varsigma})$ of
    the operator $\Lc({\varsigma})$ obtained from $\Lc$ by the 
    transformation 
    $\Tt({\varsigma})$ depend $C^{m}$-smoothly on the parameter 
    ${\varsigma}$. 
    
    By the multiple usage of the chain rule, we express the 
    derivatives of the function 
    $g(y+T({\varsigma})(x-y),y,{\varsigma})$
     via the derivatives of $g(\xi,y,{\varsigma})$; the latter is 
     the Green 
     function in the ball $B_{{\ta}}(\xi)$
      for the transformed operator $\Lc({\varsigma})$. We obtain
     \begin{gather}\label{11}
     \partial_{\xi}\partial_{\pmb{\pmb{\n}}(y,\la)}g(y+\Tt({\varsigma})(x-y),y,{\varsigma}))
     =\\\nonumber
       \partial_{\pmb{\pmb{\n}}(y,\la)}\partial_{\xi} 
       g(y+\Tt({\varsigma})(x-y),y,{\varsigma})
       =\partial_{\pmb{\pmb{\n}}(y,\la)}\partial_{{\varsigma}}g(y+\Tt({\varsigma})(x-y))\partial_{\xi}{\varsigma}|_{{}_{\varsigma=\frac{\x-y}{\dn(x)}}}=     
      \\\nonumber
       \partial_{\pmb{\pmb{\n}}(y,\la)}\left( \partial_\x 
       g(y+\Tt(\varsigma)(x-y),y,\varsigma\right)\partial_{\varsigma}\Tt(\varsigma)(x-y)
       +\\\nonumber\partial_{\varsigma}g(y+\Tt(\varsigma)(x-y),y,\varsigma)\partial_\x{\varsigma}=\\\nonumber
      \partial_{\pmb{\pmb{\n}}(y,\la)}\partial_{\x}g(y+\Tt(\varsigma)(x-y),y,\varsigma)\partial_{\varsigma}\Tt(\varsigma)(x-y)+\\\nonumber
      \partial_{\x}g(y+\Tt(\varsigma)(x-y),y,\varsigma)\partial_{\pmb{\pmb{\n}}(y,\la)}\partial_{\varsigma}\Tt(\varsigma)(x-y)\partial_{\xi}{\varsigma}+
       \\\nonumber 
      \partial_{\xi}g(y+\Tt({\varsigma})(x-y),y,{\varsigma})\partial_{\varsigma}\Tt(\varsigma)(x-y)\partial_\n\partial_{\xi}\varsigma+\\\nonumber
\partial_{\pmb{\pmb{\n}}(y,\la)}\partial_{\varsigma}g(\x,y,\varsigma)\partial_{\x}\varsigma+\partial_{\varsigma}g(\x,u,\varsigma)\partial_{\pmb{\pmb{\n}}(y,\la)}\varsigma|_{{}_{\varsigma=\frac{\x-y}{\dn(x)}}}\\\nonumber
   =\partial_{\pmb{\pmb{\n}}(y,\la)}\partial_{\x}g(y+\Tt(\varsigma)(x-y),y,\varsigma)\partial_{\varsigma}\Tt(\varsigma)(x-y)\partial_{\x}\varsigma\\\nonumber
       +\partial_{\xi}g(y+\Tt({\varsigma})(x-y),y,{\varsigma})\partial_{\pmb{\pmb{\n}}(y,\la)}\partial_{{\varsigma}}\Tt({\varsigma})(x-y)\partial_{\xi}{\varsigma}+
       \\\nonumber
      \partial_{\pmb{\pmb{\n}}(y,\la)}\partial_{{\varsigma}}g(\xi,y,{\varsigma})|_{{{\varsigma}=\frac{\xi-y}{\dn(x)}}}.  
      \end{gather}
     When obtaining \eqref{11}, we applied the fact that 
     $\partial_{\pmb{\pmb{\n}}(y,\la)}\partial_{\xi}\frac{\xi-y}{\dn(x)}=0.$
     
     We collect now the estimates which we use to treat 
     \eqref{11}:
     \begin{gather}\nonumber
       |\partial_{\pmb{\pmb{\n}}(y,\la)}\partial_{\xi}g(y+\Tt(\la)(x-y), 
       y,{\varsigma})|\le C 
       \dn(x)^{-\Nb}; \\\label{12}
       |\partial_{{\varsigma}}\Tt({\varsigma})(x-y)|\le c|x-y|\le 
       c\dn(x); \, 
       \left|\partial_{\xi}\frac{\xi}{\dn(x)}\right|\le c 
       \dn(x)^{-1};\\\nonumber
       |\partial_{\pmb{\pmb{\n}}(y,\la)}\partial_{{\varsigma}}\Tt({\varsigma})(x-y)|\le 
       C; \, 
       |\partial_{\pmb{\pmb{\n}}(y,\la)}\partial_{{\varsigma}}g(\xi,y,{\varsigma})|\le 
       C 
       \dn(x)^{1-\Nb}.
      \end{gather}
     After their substitution to \eqref{11},  we obtain 
     \begin{equation}\label{13a}
       \left| 
       \partial_{\xi}\partial_{\pmb{\pmb{\n}}(y,\la)}g(y+\Tt({\varsigma})(x-y),y,{\varsigma})\right|\le 
       c \dn(x)^{-\Nb}.
     \end{equation}
     Taken together with \eqref{7},\eqref{8},\eqref{9},\eqref{9'}, 
     this gives us 
     the required  estimate 
     for the derivative of the kernel  $K(\xi,y)$ for $\x=x$:
     \begin{equation}\label{derivative K}
       \left| \partial_{x}K(x,y)\right|\le c \dn(x)^{-1-\Nb}.
     \end{equation}
      Before we proceed, we review which estimates of derivatives of the Green function 
      $G(x,y;\varsigma)$ we need. In estimating derivatives of the kernel $K(x,y),$ we applied the chain 
      rule several times. In this calculations, some factors appeared related to  
      variables changes and they  did not involve the Green function, but only dealt with geometrical 
      quantities.  The only factors containing  derivatives of the Green function were 
      derivatives  in $x$,  accounting for the movement of the point $x$, see \eqref{9'}, and 
      second derivatives in $x,y$ and $y,\varsigma$ involved in  the evaluation of the contribution of the  
      change of direction of the conormal derivative, see \eqref{13a}, as well as the change of operator $\Lc$. Combined with other 
      terms, they give estimate \eqref{derivative K}.  We stress here that we use here 
      derivatives of the Green function for a ball with radius $d_0(x)$ only at the  points 
      $x,y,$ such that $x$ lies at the center of the ball and $y$ lies at its boundary, so, 
      the distance between $x,y$ is controlled from below by the radius of the ball.

     \subsection{Estimating the second-order derivatives, 
     $K^{''}_{xx}(x,y)$}
     We will also need estimates for the second-order derivatives of the 
     kernel $K(x,y)$. 
     The calculations go
     essentially in the same way as for the first derivative, 
     however they are 
     considerably more cumbersome, so we explain only their 
     structure. Again, multiple application of the chain rule leads to a number of factors of 
     geometrical nature, not depending on the Green function, as well as derivatives of the 
     Green function. Of these derivatives, $\partial^2_{xx}G(x,y;\varsigma)$ reflects the contribution of 
     the movement of the point $x$, while $\partial^3_{xx y}G(x,y;\varsigma)$, reflects the change of the 
     direction of the conormal derivative. Additionally, as explained in the previous subsection, there are terms 
     reflecting the change of the operator $\Lc$ when passing from the ball $B_{|x-y|}(x)$ to the ball $B_{|\x-y|}(\x)$;
     here, additionally the derivatives $\partial^2_{\varsigma\varsigma}G(x,y;\varsigma)$,  
     $\partial^3_{\varsigma\varsigma x}G(x,y;\varsigma)$,  $\partial^3_{\varsigma\varsigma y}G(x,y;\varsigma)$,  
     $\partial^3_{\varsigma xy}G(x,y;\varsigma)$ appear.
     
     In more detail, after differentiating \eqref{11}, we obtain the sum of ten 
     terms; we present 
     them, recalling the notation \eqref{10} and setting symbolically for shorthand
     $\Xi:=(y+\Tt({\varsigma})(\x-y),y,{\varsigma})$ and 
     $\Psi=(\x,y,{\varsigma})$:
     \begin{gather}\label{13}K^{''}_{\xi,\xi}(\xi,y)=\partial_{\pmb{\pmb{\n}}(y,\la)}g(\Xi)A_1(\Psi)+\partial_{\pmb{\pmb{\n}}(y,\la)}\partial_{\xi}g(\Xi)A_2(\Psi)+
     \\\nonumber 
     \partial_{\xi}g(\Xi)A_3(\Psi)+\partial_{\xi}\partial_{\xi}g(\Xi)A_4(\Psi)+\partial_{\pmb{\pmb{\n}}(y,\la)}\partial_{{\varsigma}}g(\Xi)A_5(\Psi)
     +\\\nonumber 
     \partial_{\xi}\partial_{{\varsigma}}g(\Xi)A_6(\Psi)+\partial_{\pmb{\pmb{\n}}(y,\la)}\partial^2_{{\varsigma}} 
     g(\Xi)A_7(\Psi)+ 
     \partial^2_{\xi}\partial_{\pmb{\pmb{\n}}(y,\la)}g(\Xi)A_8(\Psi)
     +\\\nonumber 
     \partial_{\pmb{\pmb{\n}}(y,\la)}\partial_{\xi}\partial_{{\varsigma}} 
     g(\Xi)A_9(\Psi)+\partial_{\pmb{\pmb{\n}}(y,\la)}\partial^2_{{\varsigma}}A_{10}(\Psi).
      \end{gather}
      Here, in  \eqref{13}, as before, 
      $\xi=y+\Tt({\varsigma})(x-y)$, 
      ${\varsigma}=\frac{\xi-y}{\dn(\x)}$, and the expressions
      $A_1$ to $A_{10}$ are combinations of the functions 
      $h_{\dn(x)}(|x-y|)$, 
      $\m({\varsigma})$, $\Tt({\varsigma})(x-y)$ and their 
      derivatives, 
      structurally similar 
      to the expressions presented in \eqref{11}, where the first 
      order derivatives 
      of $K(\xi,y)$ were being treated, as well as derivatives of $G$. These expressions can 
      be 
      estimated
      using relations \eqref{12}. We keep in mind that, due to our 
      choice of 
      $x,\xi,y$, we have
      \begin{equation}\label{same order}
       |\xi-y|\ge \frac14 |x-y|=\frac14 \dn(x)
      \end{equation}
      
      To make the explanation of these  calculations more 
      transparent, we introduce 
      the  following generic notations.
       By $\pmb{\de} g$ we denote any of first order derivatives of a 
       function $g(\Xi)$ in the variables $\xi,\n$; 
       by $\pmb{\de} ^2 g$ we denote any of second order derivatives
       in these variables, and, similarly, $\pmb{\de}^3g$ stands for 
       third order 
       derivatives.
       Relations \eqref{same order} imply 
       \begin{equation*}%\label{14}
         |\pmb{\de}  g(\Xi)|, |\partial_{{\varsigma}}\pmb{\de}  g(\Xi)|, 
         |\partial^2_{{\varsigma}}\pmb{\de}  g(\Xi)|\le C \dn(x)^{1-\Nb};
       \end{equation*}
       \begin{equation*}%\label{15}
       |\pmb{\de} ^2 g(\Xi)|, |\partial_{{\varsigma}} g(\Xi)|\le 
       c\dn(x)^{-\Nb};
       \end{equation*}
       \begin{equation*}%\label{16}
       |\pmb{\de} ^3 g(\Xi)|\le c \dn(x)^{-1-\Nb}.
       \end{equation*}
       
       On the other hand, we estimate the quantities 
       $A_1$--$A_{10}$ using 
       \eqref{12}, \eqref{6}, \eqref{Deriv.K.1.1}, 
       \eqref{Deriv.K.2.1}, 
       \eqref{Deriv.K.2'};
       their  substitution into \eqref{13} gives
       \begin{equation*}%\label{13?}
         |K_{\xi,\xi}(\xi,y)|\le c d_0(x)^{-2-\Nb},
       \end{equation*}
       in particular, for $\xi=x$, we have
       \begin{equation}\label{17}
         |K_{xx}(x,y)|\le c d_0(x)^{-2-\Nb}.
       \end{equation}

\section{The approximating function; proof of Theorem 1.2}\label{sect. appr.}
 In this section, we construct the  extension
function $f_0(x)$ and establish its 
properties, with further construction of the approximating 
function $v_\de$. 
\subsection{An estimate for the local 
approximation}\label{lem.local.appr}
In order to  study  properties of the extension function $f_0$, see \eqref{function f}, we 
need an estimate  
for derivatives of the  local approximations $\F_{x,\de}$.
\begin{lem}\label{Lem.7.1}Let $f(x),$ $x\in \Kb$ satisfy the 
conditions of Theorem 
\ref{MainTheorem}, with  $\rb\ge1.$ Let $x\in\Kb$
 and $\de>0$. Then for $y\in B_{\frac{\de}{2}},$
\begin{equation}\label{Est.F}
  |\grad_y \F_{x,\de}(y)| \le C,
\end{equation}
where the constant $C$ may depend  on $\Kb$, $\Lc$ and  $f$, but for fixed $\Kb$, $\Lc$,  
$f$,
does not depend on 
$\de$.
\end{lem}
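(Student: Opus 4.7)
The plan is to exploit the consistency condition \eqref{F2} at the same center $x$ to compare $\F_{x,\de}$ with $\F_{x,2\de}$, apply an interior gradient estimate (a consequence of Corollary \ref{cor.Kras}) to the resulting $\Lc$-harmonic difference, and telescope dyadically from $\de$ up to a fixed top scale $\de_0 \sim \diam(\Kb)$. The cumulative error is controlled by the Dini-type integrability assumed on $\om$, and the top scale is handled by a finite covering argument using compactness of $\Kb$.

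Specializing \eqref{F2} to $x_1 = x_2 = x$, $\de_1 = 2\de$, $\de_2 = \de$ (with $\g_1 = 1/2,\, \g_2 = 1$) shows that $\F_{x,\de} - \F_{x,2\de}$, being $\Lc$-harmonic on $B_\de(x)$, is bounded there by $\cb_2 (2\de)^\rb \om(2\de) \le C\de^\rb\om(\de)$. Interior gradient estimates for $\Lc$-harmonic functions---following from Corollary \ref{cor.Kras} via the Poisson representation, or from Schauder theory given the $C^3$-smoothness of coefficients---then yield
\begin{equation*}
|\nabla(\F_{x,\de}-\F_{x,2\de})(y)| \le C\de^{\rb-1}\om(\de),\quad y\in B_{\de/2}(x).
\end{equation*}
Setting $\de_j = 2^j\de$ for $j = 0,\ldots,N$ with $\de_N \sim 2\diam(\Kb)$, and observing that $B_{\de/2}(x) \subset B_{\de_j/2}(x)$ for every $j$, telescoping produces
\begin{equation*}
|\nabla\F_{x,\de}(y)| \le |\nabla\F_{x,\de_N}(y)| + C\sum_{j=0}^{N-1}\de_j^{\rb-1}\om(\de_j).
\end{equation*}
For $\rb = 1$ the sum is comparable with $\int_\de^{\de_N}\om(t)/t\,dt$, bounded by $c\om(\de_N)$ via the Dini-type condition on $\om$; for $\rb > 1$ the terms are dominated by the last geometric term $\de_N^{\rb-1}\om(\de_N)$, which is bounded since $\de_N$ is fixed.

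The main technical obstacle is bounding the base term $|\nabla\F_{x,\de_N}(y)|$ \emph{uniformly} in $x \in \Kb$; this is where the freedom in the choice of $\F_{x_i,\de_N}$ must be tamed. For this I would use a finite covering argument: pick $x_1,\ldots,x_I \in \Kb$ with $\Kb \subset \bigcup_i B_{\de_N/8}(x_i)$. Each $\F_{x_i,\de_N}$ is $\Lc$-harmonic on $B_{\de_N}(x_i)$, hence bounded on the compact subset $\overline{B_{3\de_N/4}(x_i)}$ by some $M_i$. For arbitrary $x \in \Kb$, select $i$ with $|x-x_i| \le \de_N/8$; a direct check gives $B_{3\de_N/4}(x_i) \subset B_{\de_N}(x)\cap B_{\de_N}(x_i)$, so \eqref{F2} applied to the pair $(x,\de_N),(x_i,\de_N)$ bounds $|\F_{x,\de_N}|$ on $B_{3\de_N/4}(x_i)$ by $M_i + \cb_2\de_N^\rb\om(\de_N)$. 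The interior gradient estimate then furnishes a uniform bound $|\nabla\F_{x,\de_N}| \le C_0$ on $B_{5\de_N/8}(x_i)$, which contains $B_{\de_N/2}(x)$ because $|x-x_i| \le \de_N/8$. Combined with the telescoping bound, this yields the desired uniform estimate.
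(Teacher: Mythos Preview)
Your argument is correct and follows essentially the same route as the paper: telescope $\F_{x,\de}$ dyadically up to a fixed top scale $2^N\de\sim\diam(\Kb)$, apply the interior gradient estimate to each $\Lc$-harmonic difference $\F_{x,2^{k-1}\de}-\F_{x,2^k\de}$ (bounded via \eqref{F2}), and control the resulting sum $\sum_k (2^k\de)^{\rb-1}\om(2^k\de)$ using the Dini condition on $\om$. Your finite-covering treatment of the top-scale term $\nabla\F_{x,\de_N}$ is in fact more careful than the paper's, which simply asserts that $\F_{x,2^N\de}$ is ``bounded in $B_{2^N\de}(x)$ uniformly in $x\in\Kb$''; your argument supplies the missing justification for that uniformity.
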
 
\begin{proof}Choose the integer $N$ so that 
$2^{N-1}\de<\diam(\Kb)\le 2^{N}\de$. 
Then
\begin{equation*}
  \F_{x,\de}(y)=\F_{x,2^N\de}(y)-\sum_{k=1}^N\left( 
  \F_{x,2^k\de}(y)-\F_{x,2^{k-1}\de}(y)\right).
\end{equation*}
We apply the classical property for the gradient of solutions of 
elliptic equations 
(see, e.g., \cite{Widman}): if a function $\Psi(y)$
 is a solution of the elliptic equation $\Lc \Psi(y)=0$ in the 
 ball $B_R(x),$ then 
 in the smaller concentric
   ball $y\in B_{\frac{R}{2}}(x),$
 the estimate holds

 \begin{equation*}
   |\grad \Psi(y)|\le C R^{-1}\|\Psi\|_{C(B_R(x))},
 \end{equation*}
 with constant $C$ not depending on $R$.\\
 Due to our choice of $N$, for $y\in B_{\frac{\de}{2}}(x)$, 
 \begin{equation*}
  |\grad_y \F_{x,2^N\de}(y)|\le C,  
 \end{equation*}
 since the function $\F_{x,2^N\de}$, which serves as a 'local 
 approximation' with error $2^{N}\de$ on the 
 whole of $\Kb$, is bounded in $B_{2^N\de}(x)$ uniformly in 
 $x\in\Kb$.
  
  We may assume  $\de<1$. Then Condition \eqref{F2}, for $k\ge1$ 
  implies
  \begin{equation*}
    \|\F_{x,2^{k-1}\de}-\F_{x,2^{k}\de}\|_{C(B_{2^{k-1}\de}(x))}\le
         C \left(2^{k\rb}\de\right)\de^\rb\om(2^k\de).
     \end{equation*}
     Therefore,
     
     \begin{gather*}
       \|\grad\left(\F_{x,2^{k-1}\de}-\F_{x,2^{k}\de}\right)\|_{C\left(\overline{B_{\frac{\de}{2}}(x)}\right)}\le\\\nonumber
        C (2^{k}\de)^{-1}\times 2^{k\rb}\de^{\rb}\om(2^k)=
        c 2^{k(\rb-1)}\de^{\rb-1}\om(2^k\de),\, \rb\ge 1.
     \end{gather*}
     It follows now that
  \begin{gather*}
 \|\nabla \F_{x,\de}
  \|_{C(\overline{B_{\frac{\de}{2}}(x)})}\le 
  \\\nonumber
  \sum_{k=1}^{N}\|\grad(\F_{x,2^{k-1}\de}-\F_{x,2^{k}\de})\|_{C(\overline{B_{\frac{\de}{2}}(x)})}+
 \|\grad\F_{x,2^N\de}\|_{C(\overline{B_{\frac{\de}{2}}(x)})}\le 
 \\\nonumber 
c\sum_{k=1}^{N}2^{k(\rb-1)}\de^{\rb-1} \om(2^k\de)+c\le 
\de^{\rb-1}\int_0^N 
2^{s(\rb-1)}\om(2^s\de)ds+c=
\\\nonumber c\de^{\rb-1}\int_1^{2^{N}}t^{\rb-1}\om(\de 
t)\frac{dt}{t}+c=
c\de^{\rb-1}\left(\de^{-1}\right)^{\rb-1}\int_0^{2^N\de}\tau^{\rb-2}\om(\z)d\tau+c\\\nonumber
\le C(\diam \Kb)^{\rb-1}\int_0^{2\diam\Kb}\om(\tau)\tau^{-1}d\z+c\le 
C.
  \end{gather*}  
\end{proof}
 \subsection{Estimating $\Lc f_0(x)$}\label{Sect.estimat.f0(x)}
       Now, using our estimates  for derivatives of the kernel 
       $K(x,y)$, 
   obtained in the previous section, we establish estimates for 
   the function $f_0(x)$ constructed 
       in Section 3.1, see \eqref{function f},
        and for the result of the action of the operator $\Lc$ on 
        this function.
  For a fixed point $x_0\in\Ct \Kb$, we consider
 the open cube $Q$  in the Whitney cover $\Qc$,  whose closure    contains  $x_0$. Let $x_{Q}$ be 
 the point in $\Kb$, closest to the
 center of this cube  (or one of such points).
   Recall that the function $\fT_0(x)$ (defined in \eqref{f-tilde}) 
   equals $0$ on the boundary of $Q$.
 By construction, $\Lc_x\F_{x_Q,2\de(Q)}(x)=0$, for $x$ 
   in the ball $B_{2\de(Q)}(x_Q)$, in particular,
   this holds in a small  neighborhood of the ball 
   $\overline{{B_{\dn(x_0)}(x_0)}}$. Therefore, for $x$ in a small 
   neighborhood of 
   the point $x_0$, 
   we obtain, recalling the definition of the kernel $K(x,y)$ 
   (which acts as an 
   $\Lc$-replacement  for the mean value kernel):
\begin{gather}\label{18.p11}
  |\Lc_x f_0(x)|= |\Lc_x(f_0(x)-\F_{x_Q,2\de(Q)}(x))|=\\\nonumber
  \left|\Lc_x\left(\int_{\R^\Nb}  
  \fT_0(y)K(x,y)dy-\int_{\R^{\Nb}}\F_{x_Q,2\de(Q)}(y)K(x,y)dy\right)\right|=\\\nonumber
\left| \Lc_x\left( 
\int_{\R^\Nb}(\fT_0(y)-\F_{x_Q,2\de(Q)}(y))K(x,y) dy 
\right)\right|\le \\\nonumber
C\int\limits_{{B_{\dn(x_0)+\ve}(x_0)}}|\fT_0(y)-\F_{x_Q,2\de(Q)}(y)||\nabla^2_{xx}K(x,y)|dy\le\\\nonumber
C 
\dn(x)^{-\Nb-2}\int\limits_{{B_{\dn(x_0)+\ve}(x_0)}}|\fT_0(y)-\F_{x_Q,2\de(Q)}(y)|dy,
\end{gather}
 using, on the last step, our estimates for derivatives of  the 
 kernel $K(x,y)$. We apply  estimate \eqref{17} now. For
  $x\in  \overline{B_{\dn(x_0)+\ve}(x_0)},$ the 
difference $\fT_0(y)-\F_{x_Q,2\de(Q)}(y)$ has the form 
 $\F_{x_{Q_1},2\de(Q_1)}(y)-\F_{x_Q,\de(Q)}(y)$, for a certain 
 cube $Q_1$, and  satisfies 
 the conditions of the main theorem with parameters $\de(Q), 
 \de(Q_1)$. Therefore,
\begin{equation}\label{19}
|\fT_0(y)-\F_{x_Q,2\de(Q)}(y)|=|\F_{x_{Q_1},2\de(Q_1)}(y)-\F_{x_Q,\de(Q)}(y)|\le 
c 
\dn(x_0)^{\rb}\om(\dn(x_0)).
\end{equation}
We set $x=x_0$ in \eqref{18.p11}, \eqref{19} and obtain the 
required estimate for 
$\Lc f_0$ outside $\Kb$:
\begin{equation}\label{20}
|(\Lc f_0)(x_0)|\le c \dn^{\rb-2}(x_0)\om(\dn(x_0)).
\end{equation}
 We stress here that by \eqref{20}, the larger $\rb$ in the conditions of the Theorem,  the faster the function $\Lc f_0(x)$ 
 decays as the point $x$ approaches $\Kb$. This fact will be essentially used further on.
   
  \subsection{The integral representation of the function 
  $f_0(x)$.}\label{Integr.repres}
   
  The construction of an approximating functions goes similarly to  
  the one in 
  \cite{RZ25}, with natural modifications.
Let, again, the point $x\in \Ct\Kb$ be fixed, 
$\de_0:=\dist(x,\Kb)>0$. We fix a 
number $\de\in(0,\frac{\de_0}{2})$
 and construct a finite cover of $\Kb$ by balls with radius $\de,$ 
 possessing  Property \ref{MattilaProperty},
 as this was done in  \cite{RZ25}, see Corollary 2.2 there. This 
 implies that for $R_0= \diam(\Kb)$ and $r<R_0$, there exists a 
 collection $\U_\de$
of disjoint  balls $B_{r}(x_{\am})$ centered in $\Kb$, such that the 
concentric balls 
$B_{5r}(x_{\am})$
 form a cover of $\Kb$. Moreover, for any $R\in(\de,R_0)$ and any 
 $\xb\in \Kb$, the 
 quantity of points $x_{\am}$ 
 in the ball $B_{R}(\xb)$ is no greater than $\bb_{\Nb} (R/r)^{\Nb-2}$, 
 with constant 
 $\bb_{\Nb}$ not depending on the radii $r,R$; we apply
 this result  for $r=\frac{\de}{5}$.

We denote by $\Kb_{(\de)}$ the union of balls in $\U_\de$. The 
boundary of 
$\Kb_{(\de)}$ is piecewise smooth: it consists of a finite union of 
parts of spheres with radius $r$.  Let $G^{\circ}(x,y)$ be the 
Green function for 
$\Lc$
 in the domain $\Om^{\circ}$ containing $\Kb$ (recall that 
 $\Om^{\circ}$ is taken to be the unit ball), where 
 the operator $\Lc$ is defined.  Therefore,
  in the integral representation of $f_0$ in the domain 
  $\Om_\de=\Om^\circ\setminus\overline{\Kb_{(\de)}}$, the 
  integrals over $\partial \Om$ 
  vanish,
   and this integral representation takes the form (with some 
   coefficient 
   $c_{\Nb}$)
\begin{gather}\label{23}
  f_0(x)=c_{\Nb}\int_{\partial 
  (\Kb_{\de})}f_0(y)\partial_{\pmb{\pmb{\n}}_{y}}G^{\circ}(x,y)d 
  S(y)\\\nonumber
  -c_{\Nb}\int_{\partial (\Kb_{\de})} 
  \partial_{\pmb{\pmb{\n}}_y}f_0(y)G^{\circ}(x,y)dS(y)
+ c_{\Nb}\int_{\Om_{\de}}\Lc f_0(y)G^{\circ}(x,y)dy, \, 
x\in\Om_{\de}; \end{gather}
here $dS$ denotes the $\Nb$-$1$-dimensional surface measure on the 
piecewise smooth 
surface $\partial (\Kb_{\de})$ and $\pmb{\pmb{\n}}_y$ is the conormal vector associated with operator $\Lc$.

We consider the behavior of each of the terms in \eqref{23} as 
$\de\to 0$, having 
the point $x$ fixed. In the first term, since the distance between
 $x$ and $\partial (\Kb_{\de})$ is separated from zero, the Green 
 function 
 $G^{\circ}(x,y)$ 
 is bounded uniformly in $y$. At the same time, the 
 $\Nb$-$1$-dimensional measure 
 of $\partial \Kb_{\de}$ tends to zero as $\de\to 0$.  To see 
 this, recall that the number of balls in $\U_\de$ is no greater 
 than $C\de^{2-\Nb}$, and the area of the  boundary of each ball 
 is $c\de^{\Nb-1}$.  
  Therefore, the first term in \eqref{23} tends to zero.
  
   To estimate the second term, we let $y$ belong to the closure 
   of some cube $Q$ in the Whitney 
   cover, this means,
    $y\in\partial (\Kb_{\de})\cap \overline{Q}$. Then, according 
    to the definition of $f_0,$
\begin{equation*}%\label{24a}
f_0(y)=\F_{x_Q,2\de(Q)}(y)+\int_{\R^{\Nb}}K(y,w)(\fT_0(w)-\F_{x_Q,2\de(Q)}(w))dw;
\end{equation*}
further on,
\begin{gather}\label{24}
  |\partial_y f_0(y)|\le |\partial_y 
  (f_0(y)-\F_{x_Q,2\de(Q)}(y))|+|\partial_y\F_{x_Q,2\de(Q)}(y)|\le\\\nonumber
  \left|\int_{\R^{\Nb}}\partial_yK(y,w)(\fT_0(w)-\F_{x_Q,2\de(Q)}(w))dw\right|+C,
 \end{gather}
with some absolute constant $C$, since, by \eqref{Est.F}, see 
Lemma \ref{Lem.7.1},  
the functions $\F_{x_Q,2\de(Q)}$ are uniformly bounded.

According to  the definition of the kernel $K$, the integral in 
\eqref{24} is 
spread only over the ball $B_{\dn(y)}(y)$;
for $w$ in this ball, we have, by the conditions of the main 
theorem, 
\begin{gather}\label{25}
  |\fT_0(w)-\F_{x_Q,2\de(Q)}(w)|=|\F_{x_{Q_1},2\de(Q_1)}(w)-\F_{x_Q,2\de(Q)}(w)|\le\\\nonumber 
  c\de(Q)^{\rb}\om(\de(Q))\le C \de^{\rb}\om(\de),
\end{gather}
where $Q_1$ is the cube in the Whitney cover, containing $w$, and 
$x_{Q_1}$ is the point in $\Kb$ closest to its center.
Now we recall the estimates we obtained for derivatives of the 
kernel $K$: 
$|\partial_y K(y,w)|\le c \dn(y)^{-1-\Nb}\le c\de^{-1-\Nb}$;
 therefore, \eqref{24} and \eqref{25} imply
\begin{equation*}%\label{26}
  \left|\int_{\R^{\Nb}}\partial_yK(y,w)(\fT_0(w)-\F_{x_Q,2\de(Q)}(w))dw\right|\le 
  c 
  \de^{-1-\Nb}\de^\rb\om(\de)\de^\Nb\le c\frac{\om(\de)}{\de}.
\end{equation*}
As a result, we obtain
\begin{equation*}%\label{27}
  |\partial_y f_0(y)|\le c \frac{\om(\de)}{\de},
\end{equation*}
(the same estimate that we had in \cite{RZ25}).
Thus, the second term in \eqref{23} tends to zero as $\de\to 0$ 
and we arrive at
 the integral representation for the function $f_0(x)$:
\begin{equation}\label{28a}
  f_0(x)=c_\Nb\int_{\Om\setminus\Kb}\Lc f_0(y) G^{\circ}(x,y)dy.
\end{equation}
Since $\Kb$ has zero Lebesgue measure, we can treat the integral 
in \eqref{28a}, as 
spread over the whole $\Om,$      
      \begin{equation}\label{28}
      f_0(x)=c_\Nb\int_{\Om}\Lc f_0(y) G^{\circ}(x,y)dy, \, 
      x\not\in\Kb. 
      \end{equation}
      % From  estimate 
      %\eqref{20}, 
      %it follows that the right-hand side in \eqref{28} is 
      %continuous on $\Kb$.
      
   Finally, we establish that the integral in \eqref{28} is continuous on $\Kb$ as well. To show this, 
  for a point  $x_0\in \Kb$, we consider  the integral 
 \begin{equation*}%\label{B1}
 F(x_0)=c_\Nb\int_{\Om}\Lc f_0(y)G(x_0,y)dy
 \end{equation*}
    and  estimate $f_0(x)-F(x_0)$, $x\not\in\Kb$, in order to show that 
 $F(x)-F(x_0)\to 0$ as $x\to x_0$. So, for a given $\de,$ we suppose that $|x-x_0|\le 
 \frac{\de}{2}$. We can represent the difference $f_0(x)-F(x_0)$ as
 \begin{gather}\label{B2}
 f_0(x)-F(x_0)=c_\Nb\int_{B_{\de}(x)}\Lc f_0(y)G(x,y)dy-c_\Nb\int_{B_{\de}(x)}\Lc 
 f_0(y)G(x_0,y)dy+\\\nonumber
 \sum_{j=1}^{\infty}\int_{\Ac_j}\Lc f_0(y)(G(x,y)-G(x_0,y))dy\equiv I_0(x)-I_0(x_0)+\sum_{j=1}^{\infty} I_j (x,x_0), 
 \end{gather}
 where $\Ac_j=B_{2^j\de}(x)\setminus B_{2^{j-1}\de(x)}$.
 Using our  estimate for $\Lc f_0(y)$,  we have
 for the terms $I_0(x), \widetilde{I}_0(x_0)$ on the first line in \eqref{B2},
 \begin{gather}\label{B3}
   |\widetilde{I}_0(x_0)|\le C
    \int_{B_{\de}(x)}\frac{d(y)^{\rb-2} \om(d(y))}{|x_0-y|^{\Nb-2}} dy\le\\ \nonumber C 
    \de^{\rb}
     \int_{B_{2\de}(x_0)}\frac{d(y)^{-2} \om(d(y))}{|x_0-y|^{\Nb-2}} dy\le c\om(\de),
 \end{gather}
 according to Lemma 2.4, for $k=0$. Similarly,
 \begin{equation}\label{B4}
   |I_0(x)|\le C \int_{B_\de(x)}\frac{d(y)^{\rb-2}\om(d(y))}{|x-y|^{\Nb-2}}dy\le C\de^\rb 
   \om(\de),
 \end{equation}
 by Lemma 2.2.
 The  term $I_j(x,x_0)$ on the second line in \eqref{B2} is estimated in the following way. For 
 $j\ge1,$ $y\in \Ac_j(x)$,
 we have $|G(x,y)-G(x_0,y)|\le C \de (2^j\de)^{1-\Nb},$ therefore,
 
 \begin{equation*}%\label{B5A}
   |I_j(x,x_0)|\le c \de (2^j\de)^{1-\Nb}\int_{\Ac_j(x)}d(y)^{\rb-2}dy=c\de(2^j)^{\rb-1}\om(2^j\de).
 \end{equation*}
 Using now (2.6), we obtain
 
 \begin{equation*}%\label{B6A}
   |I_j(x,x_0)|\le c \de(2^j\de)^{1-\Nb}(2^j\de)^{\rb+\Nb-2}\om(2^j\de).
 \end{equation*}
 In our case, since $\rb\ge 1$ and the domain $\Om^{\circ}$ is bounded (it is the unit disk), the sum 
 in \eqref{B2} is finite, it contains only terms with $2^{j-1}\de<1$, $j\le N_0$, therefore, 
 $(2^j\de)^{\rb-1}\le C (2^j\de)^{-1}$. This gives for  $I_j(x,x_0)$ the estimate
 
 \begin{equation*}%\label{B7}
 |I_j(x,x_0)|\le C \de (2^j\de)^{-1}\om(2^j\de)=c 2^{-j}\om(2^j\de).
 \end{equation*}
 It follows that
 
 \begin{gather}\label{B8}
   \sum_{j}|I_j(x,x_0)|\le c \sum 2^{-j}\om(2^j\de)\le\\\nonumber
   c\int_0^{\infty}\frac{\om(2^t\de)}{2^t}dt=c'\int_1^\infty\frac{\om(\tau\de)}{\tau^2}d\tau\le 
   c''\om(\de).
 \end{gather}
 Taken together, estimates \eqref{B2}, \eqref{B3}, \eqref{B4}, \eqref{B8} give 
 $|f_0(x)-F(x_0)|<C \om(\de)$. This means that $F_(x)$ converges to $f_0(x_0) $ as $x\to x_0\in 
 \Kb.$

   Since both parts in  \eqref{28}  are continuous on $\Kb$, we see that
  the representation \eqref{28} is valid for all  
      $x\in\Kb$, and therefore on the whole $\Om^{\circ}$.
      
\subsection{Construction of the approximating function 
$v_\de$}\label{Subs.construction}

Before giving a detailed description of the formula \eqref{25} below, we would like to compare this construction with the one 
used in \cite{RZ25} for the case $\rb=0$. In that paper, only the integral term in  \cite{RZ25} was present, and it provided approximation with 
error $\om(\de).$ The extra terms which appear in \eqref{25} are  $\Lc-$harmonic and they improve the quality of approximation when $\rb>0$.

Now we pass to the description of our approximation. We fix a point $\Ob\in\Kb$, which will serve as the starting point 
of our 
construction,
 for all values of the parameter $\de,$ $0<\de\le\diam(\Kb).$ 
 For a given $\de$, we consider the cover $\U_\de,$ as in Property 
 \ref{MattilaProperty},
  by balls $B_{\de}(x_{\am})$.
  We enumerate the points $x_{\am}$ in the following way: the 
  starting numbers go to 
  the points
   $x_{\am}\in \overline{B_{2\de}(\Ob)},$ the following ones go to 
   the 
   points 
   $x_{\am}\in\overline{B_{4\de}(\Ob)}\setminus\overline{B_{2\de}(\Ob)}$, 
   and 
   further on, along the expanding spherical annuli.
    The points, with new numbering, will be denoted 
    $x_\n, \, \n=1,\dots,N.$
     We introduce disjoint sets, $W_1=\overline{B_{2\de}(x_1)}$,
      $W_2=\overline{B_{2\de}(x_2)}\setminus\overline{B_{2\de}(x_1)}$,
       $W_3=\overline{B_{2\de}(x_3)}\setminus(\overline{B_{2\de}(x_2)}\cup\overline{B_{2\de}(x_1)})$,
and so on. If it turns out that for some $\n$, the set $W_\n$ is 
void,  
$W_\n=\varnothing,$ we just delete it and compress the numeration, 
so that, as a result, we have  the sequence of nonempty sets 
$W_\n.$ We define now 
the  sequence of weights $\ro_\n:$ 
\begin{equation}\label{33}
  \ro_\n=(\meas_{\Nb}B_{2\de}(\Ob))^{-1}\int_{W_\n}\Lc f_0(x)dx.
\end{equation}
Definition \eqref{33} and estimates \eqref{20}, \eqref{22} imply

\begin{equation}\label{34}
  |\ro_\n|\le c 
  \de^{-\Nb}\de^{\rb-2+\Nb}\om(\de)=c\de^{\rb-2}\om(\de).
\end{equation}
Next, we define the function
\begin{equation}\label{34a}
  F_\n(x)=c_\Nb\ro_\n\int_{B_{2\de}(V(x_\n,2\de))}G^{\circ}(x,y)dy, 
  \, x\in\Om;
\end{equation}
here $V(x_\n,2\de)$ is the point constructed, with $\Ob= x_0$ in 
place of $x_\n$, in the 
end of Section \ref{SEct.geometry}
 and $c_{\Nb}$ is defined in \eqref{28}. This function is 
 $\Lc$-harmonic outside 
 the ball $B_{2\de}(V(x_\n,2\de)), $ and, therefore, inside $\Kb_{(\de)}$. Finally, for $x\in\Kb_{2\de}$, 
 we define
\begin{equation}\label{35}
  v_{\de}(x)=c_\Nb\int\limits_{\Om\setminus \Kb'_\de}G(x,y)\Lc 
  f_0(y)dy+\sum_{\n=1}^{N} F_\n(x).
\end{equation}
This function is $\Lc$-harmonic in $\Kb_{(\de)}$; it will serve as the required approximation.

\subsection{Estimates for $f_0-v_\de$}\label{Sect.Estim}
We recall that $f_0$ is a smooth extension of the given function 
$f$ from the set 
$\Kb$ to the 
enveloping domain $\Om^{\circ}$ with controlled behavior of 
derivatives and of $\Lc f_0(x)$ as $x$ approaches 
$\Kb$. Thus, on $\Kb$,
 in fact, estimates  for $f_0-v_\de$ coincide with  estimates for  
 $f(x)-v_\de(x)$, 
 this means, they give the quantity we are interested in. 

Using \eqref{28} and \eqref{35}, we can represent the difference 
$f_0(x)-v_\de(x)$, 
$x\in\Kb,$ as 
\begin{gather}\label{36}
 f_0(x)-v_\de(x)= c_\Nb\int_{\Om\setminus\Kb'_\de} 
 G^{\circ}(x,y)\Lc f_0(y)dy 
 +c_\Nb\int_{\Kb'_\de} G^{\circ}(x,y)\Lc f_0(y)dy\\\nonumber 
 -c_\Nb\int_{\Om\setminus\Kb'_\de} G^{\circ}(x,y)\Lc 
 f_0(y)dy-\sum_{\n=1}^{N} 
 F_\n(x)=\\\nonumber
 c_{\Nb}\int_{\Kb'_\de}G^{\circ}(x,y)\Lc f_0(y)dy- \sum_{\n=1}^{N} 
 F_\n(x)=\\\nonumber 
 c_{\Nb}\sum_{\n=1}^N\left(\
\int_{W_\n}  G^{\circ}(x,y)\Lc 
f_0(y)dy-\ro_\n\int_{B_{2\de}(V(x_\n,2\de))}G^{\circ}(x,y)dy\right).
\end{gather}
In the transformation in \eqref{36}, we used the fact that the 
sets $W_\n$ are 
disjoint, $W_\n\subset \overline{B_{2\de}(x_\n)}$, and their union 
is $\Kb'_{\de}$.
 We choose the number $M$ so that 
\begin{equation*}
  \diam(\Kb)<2^M\de\le 2\diam(\Kb).
\end{equation*}
With this notation, the last expression in \eqref{36} can be 
transformed in the 
following way:
\begin{gather}\label{37}
\sum_{\n=1}^N\left(\
\int_{W_\n} G^{\circ}(x,y)\Lc 
f_0(y)dy-\ro_\n\int_{B_{2\de}(V(x_\n,2\de))}G^{\circ}(x,y)dy\right)=\\\nonumber
 \sum\limits_{\n: x_\n\in B_{4\de}(x)}\left(\
\int_{W_\n} G^{\circ}(x,y)\Lc 
f_0(y)dy-\ro_\n\int_{B_{2\de}(V(x_\n,2\de))}G^{\circ}(x,y)dy\right)=\\\nonumber
  {\sum_{l=3}^M}{\sum_{\n}}^{(l)}\left(\
\int_{W_\n} G^{\circ}(X,y)\Lc 
f_0(y)dy-\ro_\n\int_{B_{2\de}(V(x_\n,2\de))}G^{\circ}(x,y)dy\right)\\\nonumber
\overset{def}{=} \Ic_0+{\sum^{M}_{l=3}}{}^{{}^{(l)}} \Ic_l,
\end{gather}
where the superscript $(l)$ in  $\sum^{(l)}$ indicates the 
summation over those $\n$ for which $ x_\n\in 
\overline{B_{2^l\de}(x)}\setminus{B_{2^{l-1}\de}(x)}$.
The term $\Ic_0$ contains no more than $4^{\Nb-2}b_{\Nb}$ 
summands.  For each of 
them, this means, for $x_\n\in B_{4\de}(x)$, we use estimate 
\eqref{20}, 
the general estimate $|G(x,y)|\le c|x-y|^{2-\Nb}$, and the 
estimate of the integral 
\eqref{21}. This gives us
\begin{equation}\label{38}
  \left|c_{\Nb}\int_{W_\n} G^{\circ}(x,y) \Lc f_0(y)dy\right|\le 
  c\int\limits_{{B_{2\de}(x_\n)}}\dn(y)^{\rb-2}\om(\dn(y))|x-y|^{2-\Nb}dy\le 
  C\de^{\rb}\om(\de).
\end{equation}Next, for $y\in B_{2\de}(V(x_\n,2\de)),$ we have 
$|G^{\circ}(x,y)|\le c 
|x-y|^{2-\Nb}\le c \de^{2-\Nb}$, therefore, \eqref{34} implies
\begin{equation}\label{39}
  \left|c_{\Nb}\ro_\n\int_{B_{2\de}(
  x_\n)}G^{\circ}(x,y)dy\right|\le 
  c\de^{\rb-2}\om(\de)\times 
  \de^{2-\Nb}\de^{\Nb}=c\de^{\rb}\om(\de).
\end{equation}
Estimates \eqref{38}, \eqref{39} produce the bound for $\Ic_0$:
\begin{equation*}%\label{40}
|\Ic_0|\le c \de^{\rb}\om(\de).
\end{equation*}

Next, we consider the term $\Ic_l,$ $l\ge 3$ in \eqref{37}, this means, the sum 
over such $\n$ for which 
$2^{l-1}\de\le|x_\n-x|\le 2^l\de.$ There are no more
 than $2^{l(\Nb-2)}b_{\Nb}$ points $x_\n$ in this spherical 
 annulus. Considering  one of 
 these points,
 we choose  arbitrarily two additional  points $y_{\n 1}\in W_\n$ 
 and $y_{\n 2}\in 
 \overline{B_{2\de}(V(x_\n,2\de))}.$ Then we have
\begin{gather}\label{41}
  \int_{W_\n}G^{\circ}(x,y)\Lc f_0(y)dy -
  \ro_\n\int\limits_{{B_{2\de}(V(x_\n,2\de))}}G^{\circ}(x,y)dy=\\\nonumber
  \int_{W_\n}\Lc f_0(y)\left(G^{\circ}(x,y)-G^{\circ}(x,y_{\n 
  1})\right)dy+G^{\circ}(x,y_{\n 
  1})\int_{W_\n}\Lc f_0(y)dy-\\\nonumber
\ro_\n\!\!\!\!\!\!\!\int\limits_{{B_{2\de}(V(x_\n,2\de))}}(G^{\circ}(x,y)-G^{\circ}(x,y_{\n 
2}))dy-\ro_\n 
G^{\circ}(x,y_{\n 
2})\int\limits_{{{B_{2\de}(V(x_\n,2\de))}}}dy=\\\nonumber
\int_{W_\n}\Lc f_0(y)\left(G^{\circ}(x,y)-G^{\circ}(x,y_{\n 
1})\right)dy 
-\ro_\n\int\limits_{{B_{2\de}(V(x_\n,2\de))}}(G^{\circ}(x,y)-G^{\circ}(x,y_{\n 
2}))dy+\\\nonumber
\left(G^{\circ}(x,y_{\n 1})-G^{\circ}(x,y_{\n 
2})\right)\int_{W_\n}\Lc f_0(y)dy.
\end{gather}
When performing transformations in \eqref{41}, we used the 
definition of the 
coefficient $\ro_\n$ in \eqref{33}. 

We pass to estimating separate terms in \eqref{41}. For $y\in 
\overline{B_{2^l\de}(x)}\setminus B_{2^{l-1}\de}(x)$, 
we have $|\nabla_{y}G^{\circ}(x,y)|\le |x-y|^{1-\Nb}\le C 
(2^l\de)^{1-\Nb}$. Therefore,
\begin{gather}\label{42}
|G^{\circ}(x,y)-G^{\circ}(x,y_{\n 1})|=\left|\int_0^1 
D_{t}(G^{\circ}(x, y+t(y_{\n 1}-y)))dt\right|\le\\\nonumber c 
\de|x-y|^{1-\Nb}\le c \de (2^l\de)^{1-\Nb}.
\end{gather}
In a similar way, using the estimate for the derivatives of the 
Green function and 
taking
 into account the position of the points $y,y_{\n 1}, y_{\n 2}$, 
 we obtain
\begin{equation}\label{43}
  \left| G^{\circ}(x,y)-G^{\circ}(x,y_{\n 2})\right|\leq \de 
  (2^l\de)^{1-\Nb},
\end{equation}
\begin{equation}\label{44}
   \left| G^{\circ}(x,y_{\n 1})-G^{\circ}(x,y_{\n 2})\right|\leq 
   \de (2^l\de)^{1-\Nb}.
\end{equation}

Adding up inequalities \eqref{41}-\eqref{44}, we arrive at the  
estimate for a 
single term in $\Ic_{l}$:
\begin{gather}\label{45}
  c_{\Nb}\left| \int_{W_\n}G^{\circ}(x,y)\Lc 
  f_0(y)dy-\ro_\n\int\limits_{{B_{2\de}(V(x_\n,2\de))}}G^{\circ}(x,y)dy\right|\le 
  \\\nonumber
 c\de(2^l\de)^{1-\Nb}\left(\int_{W_\n}|\Lc 
 f_0(y)|dy+\ro_\n\meas_{\Nb}B_{2\de}(\Ob)\right)\le 
  \\\nonumber 
  c\de(2^l\de)^{1-\Nb}\left(\int_{W_\n}\dn(y)^{\rb-2}\om(\dn(y))dy+\de^{\rb-2+\Nb}\om(\de)\right)\le
c\de^{\rb-2+\Nb}\om(\de)\de(2^l\de)^{1-\Nb};
\end{gather}
in transformations in \eqref{45}, we used \eqref{34} and 
\eqref{22}. Since in the 
spherical annulus
 $B_{2^l\de}(x)\setminus B_{2^{l-1}\de}(x),$ there are no more 
 than $c 
 2^{l(\Nb-2)}$
 points $x_\n$, we obtain the required estimate for $\Ic_l$:
\begin{equation*}%\label{46}
  |\Ic_l|\le c 2^{l(\Nb-2)}\de^\rb\om(\de)\cdot 
  \frac{1}{2^{l(\Nb-1)}}=C 
  2^{-l}\de^\rb\om(\de).
\end{equation*}
Now we sum over $l$ and  arrive at
\begin{equation*}%\label{47}
  |f_0(x)-v_\de(x)|\le c\de^{\rb}\om(\de)(1+\sum_{l\ge3} 
  2^{-l})=C\de^{\rb}\om(\de).
\end{equation*}

This inequality proves the first statement of the main theorem. 
The second part 
follows easily from the first one since:
\begin{equation}\label{48}
|v_{\de}(x)-v_{\frac{\de}{2}}(x)|\le 
|v_{\de}(x)-f_0(x)|+|v_{\frac{\de}{2}}(x)-f_0(x)|\le 
c\de^{\rb}\om(\de).
\end{equation}
Thus, the 'only if' part of the Theorem is proved for 
$x\in\Kb'_{\de}\supset\Kb_\de.$

The 'if' statement follows by setting $\F_{x,\de}(y)=v_{\de}(y),\,  
y\in 
B_{\de}(x),$ for all $x\in\Kb$, this means, we take the single 
function $v_{\de}(y)$ as the local approximates for $f$ at all points $x\in\Kb$. 
The required property of the function $\F_{x,\de}$
 follows from \eqref{48} and the  equality
\begin{equation*}
  v_\de-v_{2^{-3}\de}=\sum_{s=0}^{2}(v_{2^{-s}\de}-v_{2^{-s-1}\de}).
\end{equation*}

\section{Generalized  derivatives of $f(x), \, 
x\in \Kb$}\label{SEct.derivatives}
In this section, we \emph{define} generalized derivatives of the 
function 
$f_0(x)$ at 
points $x\in\Kb$, namely, points, where the usual derivatives, 
generally, do not exist. 
These derivatives are used to define surrogates of derivatives of the 
initial function 
$f$. We show here that this definition is self-consistent, and 
then we prove that the 
derivatives of the approximating functions $v_\de$ converge to 
these generalized 
derivatives of $f$ on $\Kb$ as $\de\to 0$.  Naturally, the higher 
derivatives we consider, the more
 smoothness we require from the coefficients of the operator 
 $\Lc$.
\subsection{Definitions}\label{Def.gener.}
Let $\a$ be a multi-index, $1\le |\a|=k\le \rb,$ $c_{\Nb}$ is the 
constant in 
\eqref{28}, where the representation for $f_0$ is found.
\begin{defin}For $x\in\Kb$, we define the generalized derivative 
$f_{(\a)}(x)$ by
\begin{equation}\label{A10}
  f_{(\a)}(x):= c_{\Nb}\int_{\Om^{\circ}}\Lc f_0(y)\partial_{x}^\a 
  G^{\circ}(x,y)dy+\sum_{\n}\partial^{\a}F_{\n}(x),
\end{equation}
where $G^{\circ}(x,y)$ is the Green function for $\Lc$ in 
$\Om^{\circ}$.
\end{defin}
This means that we define derivatives of $f_0$ by, still formal, 
differentiation of 
the representation \eqref{28}. 

To justify the definition, we need first to prove that the 
integral in \eqref{A10} converges.
In fact, for the function $f_0(x),$ as defined earlier, we have 
the estimate
\begin{equation*}
  |\Lc f_0(y)|\le c d(y)^{\rb-2}\om(d(y)).
\end{equation*}
Suppose that the coefficients of the operator $\Lc$ belong to 
$C^{3+|\a|}$. Since, 
by \eqref{Kras.est},
\begin{equation*}
  |\partial_x^{\a}G^{\circ}(x,y)|\le c |x-y|^{-(\Nb-2+k)}, \, k=|a,|
\end{equation*}
we have
\begin{equation*}
  |\Lc f_0(y)\partial^{\a}G^{\circ}(x,y)|\le C 
  \frac{d(y)^{\rb-2}\om(d(y))}{|x-y|^{\Nb-2+k}},
\end{equation*}
and, since $k=|\a|\le \rb,$ we can use the results of Lemma \ref{lemA1},
\ref{lem.int.4}, 
therefore, the integral in \eqref{A10} converges.

\subsection{Proof of Theorem \ref{Thm.quality}}  In this section,  
$\|\cdot\|_{\Kb_{\frac{\de}{2}}}$ denotes the norm in 
$C(\Kb_{\frac{\de}{2}})$ etc.
We prove the estimate \eqref{appr.deriv} first.
For some $\D>0$, fix a point $x^0\in \Kb_{\D/2}.$ Then, for $x\in \Kb_{\D},$ we have $\Lc v_{\D}(x)=\Lc v_{2\D}(x)=0.$
Consider the Green function  $G_\D(x,y)$ for $\Lc$ in the ball $B_{\D}\equiv B_{\D}(x^0).$ Then
\begin{equation*}
  v_{\D}(x)-v_{2\D}(x)=c_{\Nb}\int_{\partial B_{\D}}(v_{\D}(y)-v_{2\D}(y))\partial_{\pmb{\n}_y}G_\D(x,y) dS(y).
\end{equation*}
We differentiate this equality  $\rb+1$ times in $x$:

\begin{equation*}
  \nabla^{\rb+1}(v_{\D}(x)- v_{2\D}(x))=c_{\Nb}\int_{\partial B_{\D}}(v_{\D}(y)-v_{2\D}(y))\partial_{\pmb{\n}_y} \nabla_x^{\rb+1}G_\D(x,y) dS(y).
\end{equation*}
Now we use the estimate for the derivative of the Green function and obtain, for $x\in B_{\D/2},$ see Corollary \ref{cor.Kras}:
\begin{gather}\label{Additional}
  |\nabla^{\rb+1}(v_{\D}(x)- v_{2\D}(x))|\le C \D^{\rb}\om(\D)\int_{\partial B_{\D}}|x-y|^{-\Nb+1-\rb}dS(y)\le \\\nonumber C   \D^{-1}\om(\D).
 \end{gather}

With $\de$ fixed, we take an integer $M$ so that $\diam(\Kb)< 
2^M\de\le 
2\diam(\Kb),$ and we write \eqref{Additional} for $\D=2^{j-1}\de,$ $1\le j\le M$. Adding the corresponding estimates, we obtain and  
\begin{gather}\label{AA.1}
  \|\nabla^{\rb+1}v_{\de}\|_{\Kb_{\frac{\de}{2}}} \le 
  \sum_{j=1}^{M}\|\nabla^{\rb+1}(v_{2^{j-1}\de}-v_{2^{j}\de})\|_{\Kb_{\frac{\de}{2}}}+\\\nonumber
\|\nabla^{\rb+1}(v_{{}_{2^M\de}})\|_{\Kb_{\frac{\de}{2}}}\le 
\|\nabla^{\rb+1}(v_{2^M\de})\|_{\Kb_{\frac{\de}{2}}}+O(1).
\end{gather}
due to the third property of the approximating function in 
\eqref{appr.function} and Lemma \ref{Lem.7.1}.

After this, the sum $\sum_{j=1}^{M}\de^{-1} 2^{-j}\om(2^j\de)$ is 
estimated similarly 
to the sums in Sect. \ref{Sect.Integrals}, via the integral
\begin{equation*}%\label{AA.3}
  \de^{-1}\int_{1}^M 2^{-\tau}\om(2^{\tau}\de)d\tau\le 
  c\int_{2\de}^{\infty}\frac{\om(s)}{s^2}ds\le c 
  \frac{\om(\de)}{\de},
\end{equation*}
and this proves the inequality.

Now we prove the approximation property.
It follows from \eqref{35} that 
\begin{equation}\label{A15}
  \partial^{\a}v_\de(x)=c_\Nb \int_{\Om\setminus \Kb_{2,\de}}\Lc 
  f_0(y)\partial_x^{\a}G^{\circ}(x,y)dy+
\sum_{\n=1}^{N}\partial^{\a}F_\n(x), \, x\in\Kb, 
\end{equation}
where $F_\n (x)$ are functions constructed for the given $\de$ as 
in Sect. 
\ref{Subs.construction}.

Next, from the definition \eqref{A10}, and \eqref{A15}, an 
estimate for the 
approximation of derivatives follows,
\begin{gather}\label{A16}
  f_{(\a)}(x)-\partial^{\a}v_{\de}(x)=c_{\Nb}\int_{\Kb_{2,\de}}\Lc 
  f_0(y)\partial^{\a}_xG^{\circ}(x,y) dy 
  -\sum_{\n=1}^{N}\partial^{\a}F_\n(x)=\\\nonumber
  \sum_{\n=1}^{N}c_{\Nb}\int_{W_\n}\Lc 
  f_0(y)\partial_{x}^{\a}G^{\circ}(x,y)dy-\sum_{\n=1}^{N}\partial^{\a}F_\n(x).
\end{gather}
Now we can use again  the estimates of $\Lc f$ obtained in Sect. 
\ref{Sect.estimat.f0(x)} and of derivatives of the Green function, 
which gives
\begin{equation*}
  \left|\int_{W_\n}\Lc f_0(y)\partial_x^\a G^{\circ}(x,y) 
  dy\right|\le c 
  \int_{B_{2\de}(x_\n)}\frac{d(y)^{\rb-2}\om(d(y))}{|x-y|^{\Nb-2+|\a|}}dy.  
\end{equation*}
If the point $x_\n$ is close to $x$, namely,  $x_\n\in 
B_{4\de}(x),$ we can use 
estimates \eqref{A5} and \eqref{38}, which give
\begin{equation*}%\label{A17}
  \int_{B_{2\de}(x_\n)}\frac{d(y)^{\rb-2}\om(d(y))}{|x-y|^{\Nb-2+k}}dy 
  \le c 
  \de^{\rb-k}\om(\de),
\end{equation*}
since here $B_{2\de(x_\n)}\subset B_{6\de(x)}$. 

For the derivative $\partial^\a F_{\n}$ the estimate in this 
domain is easier. It 
follows from \eqref{34} that
\begin{equation}\label{F11}
  \partial^{\a}F_{\n}(x)=c_\Nb\ro_\n\int_{B_{2\de}(V_{x_\n,\de 
  })}\partial_x^{\a}G(x,y)dy.
\end{equation}
By our estimate \eqref{34a}, 
\begin{equation}\label{F12}
  |\ro_\n|\le c \de^{\rb-2}\om(\de).
\end{equation}
For $x_{\n}\in \overline{B_{4\de}(x)},$ relations \eqref{F11}, 
\eqref{F12} imply
\begin{gather}\label{F3}
  \left|\partial^\a F_{\n}(x)\right|\le c 
  \de^{\rb-2}\om(\de)\int_{B_{2\de}(V_{x_\n,2\de })}|\partial_x^\a 
  G^{\circ}(x,y)| dy\le\\\nonumber c 
  \de^{\rb-2}\om(\de)\int_{B_{2\de}(V_{x_\n,2\de})}|x-y|^{-\Nb+2-|\a|} 
  dy.
\end{gather}
Since $|V(x_{\n,2\de})-x_\n)|\ge 12\de$, we have $|y-x|\ge 
12\de-2\de-4\de=6\de$ for 
$y\in{B_{2\de}(V_{x_\n , 2\de})}$ and $x_{\n}\in B_{4\de}(x)$, 
therefore, we obtain 
from \eqref{F3}:
\begin{equation*}%\label{F4}
 |\partial^\a F_{\n}(x)|\le c \de^{\rb-2}\om(\de)\cdot 
 \de^{\Nb}\de^{-\Nb+2-|\a|}=c \de^{\rb-|\a|}\om(\de). 
\end{equation*}

We consider now those $x_\n$ which lie outside the ball 
$B_{4\de}(x)$,
 i.e.,  $x_\n\in B_{2^{l+1}\de}(x)\setminus B_{2^{l}\de}(x)$  for 
 some $l\ge 2$. In 
 this case, for $y\in B_{2\de}(x_{\n})$, we have $|x-y|\asymp 2^l 
 \de$, and, again, 
 using Green functions estimates \eqref{Kras.est} we have
\begin{gather}\label{A18}
 |S_{\n}|:= \left| \int_{W_\n}\Lc f_0(y)\partial_x^\a 
 G(x,y)dy\right|\le C 
 \int_{B_{2\de}(x_\n)}\frac{d(y)^{\rb-2}\om(d(y))}{|x-y|^{\rb-2+k}}dy\le 
 \\\nonumber
  c\frac{\de^{\Nb-2+\rb}\om(\de)}{(2^{l}\de)^{\Nb-2+k}}=c\de^{\rb-k}\om(\de)2^{-(\Nb-2+k)j},\, 
  k=|\a|.
\end{gather}

We recall that in the proof of the estimate for $f(x)-v_\de(x)$ we 
used the fact 
that the spherical annulus $B_{2^{l+1}\de}(x)\setminus 
B_{2^{l}\de}(x)$ contains no 
more than $c 2^{(\Nb-2)l}$ points $x_{\n}$, therefore, inequality 
\eqref{A18} 
implies  the estimate

\begin{gather}\label{A19}
 \sum_{\n:x_\n\in  B_{2^{l+1}\de}(x)\setminus 
 B_{2^{l}\de}(x)}|S_{\n}|\le   
 \\\nonumber c 
 2^{(\Nb-2)l}\de^{\rb-k}\om(\de)\frac{1}{2^{(\Nb-2+k)l}}
  =c 2^{-lk}\de^{\rb-k}\om(\de).
\end{gather}

To obtain the  estimate for $\sum_{\n=1}^\infty\partial^{\a}F_\n$ 
a 
similar, but much simpler calculation works.
We collect estimates \eqref{A16}-\eqref{A19} to obtain the 
required inequality
\begin{equation*}%\label{A20}
  |f_{(\a)}(x)-\partial^{\a}v_{\de}(x)|\le c 
  \de^{\rb-k}\om(\de)\sum_{l=1}^{\infty}2^{-l}+c\de^{\rb-k}\om(\de)=c\de^{\rb-k}\om(\de).
\end{equation*}

\subsection{Taylor remainder estimates} We have defined, for a function 
$f\in\Hc_{\Lc}^{\rb+\om}(\Kb)$ admitting 
local approximation by $\Lc-$harmonic functions, certain 
surrogates of derivatives.  The 
same kind of calculations as in the previous subsection, which 
established the 
convergence of derivatives of the  approximating functions $v_\de$ 
to the 
generalized derivatives of $f$, enables one to prove that in a 
certain sense, these 
generalized derivatives may be used to construct a Taylor type 
formula for $f$ and 
its derivatives. We give here  only the formulation.

\begin{thm}\label{Thm Holder estim}For the function $f$ and the 
compact set $\Kb$ 
satisfying the conditions of this paper, provided the coefficients of $\Lc$ belong to $C^{\rb+4}$,
the following inequality holds, with some constant $c$ not 
depending on $x_1,x_2\in 
\Kb$: 
\begin{gather*}%\label{A12}
  \left|f_{(\a)}(x_2)-f_{(\a)}(x_1)-\sum_{|\be|\ge 1, 
  |\a|+|\be|\le 
  \rb}(\be!)^{-1}f_{(\a+\be)}(x_1)(x_2-x_1)^{\be}\right|\le\\\nonumber 
  c 
  |x_2-x_1|^{\rb-k}\om(|x_2-x_1|),\, 
\end{gather*}
for $1\le |\a|=k<\rb.$ As a limit case, 
\begin{equation*}
  |f_{(\a)}(x_1)-f_{(\a)}(x_2)|\le c \om(|x_1-x_2|), \, |\a|=\rb.
\end{equation*}
\end{thm}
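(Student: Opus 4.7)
The plan is to apply Taylor's formula to the smooth approximant $v_\D$ along the segment joining $x_1$ and $x_2$, for $\D$ of the same order as $|x_1-x_2|$, and then exchange every appearance of $\partial^\gamma v_\D(x)$ at a point $x\in\Kb$ for the generalized derivative $f_{(\gamma)}(x)$ via the pointwise bound \eqref{appr.deriv.2} from Theorem \ref{Thm.quality}. The Taylor remainder will be controlled by the $C^{\rb+1}$ bound \eqref{appr.deriv}, while the substitution errors will be controlled by \eqref{appr.deriv.2}; both contributions are of the desired size $|x_1-x_2|^{\rb-k}\om(|x_1-x_2|)$.

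Concretely, set $\de=|x_1-x_2|$ and $\D=2\de$, so that the straight segment from $x_1$ to $x_2$ lies inside $\Kb_{\D/2}$, the neighborhood in which the derivative estimate \eqref{appr.deriv} is valid. The hypothesis $\ab\in C^{\rb+4}$ ensures that $v_\D$ is $C^{\rb+1}$ on $\Kb_\D$ by elliptic regularity applied to $\Lc v_\D=0$, so the standard Taylor expansion with integral remainder for $\partial^\a v_\D$ around $x_1$ reads
\begin{equation*}
\partial^\a v_\D(x_2)=\partial^\a v_\D(x_1)+\sum_{1\le|\be|\le\rb-k}\frac{1}{\be!}\partial^{\a+\be}v_\D(x_1)(x_2-x_1)^\be+R_\D,
\end{equation*}
with
\begin{equation*}
|R_\D|\le c\,|x_2-x_1|^{\rb-k+1}\,\|\nabla^{\rb+1}v_\D\|_{\Kb_{\D/2}}\le c'\de^{\rb-k+1}\cdot\frac{\om(\D)}{\D}\le c''\de^{\rb-k}\om(\de),
\end{equation*}
where we used \eqref{appr.deriv} at scale $\D$ together with the mild doubling property $\om(2\de)\le 2\om(\de)$.

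The remaining work is to replace each $\partial^\gamma v_\D$ in the above expansion by the corresponding generalized derivative. By \eqref{appr.deriv.2}, for each multi-index $\gamma$ with $1\le|\gamma|\le\rb$ and each $x\in\Kb$,
\begin{equation*}
|\partial^\gamma v_\D(x)-f_{(\gamma)}(x)|\le C\D^{\rb-|\gamma|}\om(\D)\le C'\de^{\rb-|\gamma|}\om(\de).
\end{equation*}
Swapping $\partial^\a v_\D(x_i)$ for $f_{(\a)}(x_i)$ at $i=1,2$ each costs at most $C\de^{\rb-k}\om(\de)$; swapping $\partial^{\a+\be}v_\D(x_1)$ for $f_{(\a+\be)}(x_1)$ in a term of the sum produces an error of size $C\de^{\rb-k-|\be|}\om(\de)\cdot|x_2-x_1|^{|\be|}=C\de^{\rb-k}\om(\de)$. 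Since the number of multi-indices $\be$ with $|\be|\le\rb-k$ is bounded by a constant depending only on $\Nb$ and $\rb$, the total substitution error is of the required order. The limit case $|\a|=\rb$ is even simpler: there is no Taylor polynomial to keep, the mean-value inequality gives $|\partial^\a v_\D(x_2)-\partial^\a v_\D(x_1)|\le\de\,\|\nabla^{\rb+1}v_\D\|_{\Kb_{\D/2}}\le c\,\om(\de)$, and \eqref{appr.deriv.2} at $|\gamma|=\rb$ contributes another $O(\om(\de))$ per endpoint.

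The main obstacle is geometric rather than analytic: one must verify that the segment $[x_1,x_2]$ truly sits inside $\Kb_{\D/2}$ so that the higher-derivative estimate \eqref{appr.deriv} applies, which is exactly why the choice $\D=2\de$ with $\de=|x_1-x_2|$ is forced rather than $\D=\de$. All other steps are bookkeeping: the Taylor remainder and each substitution error independently yield $O(\de^{\rb-k}\om(\de))$, and summing finitely many such terms preserves this bound, so no delicate cancellation or finer estimate is required beyond what is already packaged in Theorem \ref{Thm.quality}.
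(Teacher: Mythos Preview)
Your argument is correct and takes a genuinely different route from the paper's. The paper indicates that its proof proceeds by substituting the Taylor expansion of the Green function $\partial_x^\a G^{\circ}(x,y)$ (in the $x$-variable, around $x_1$) into the integral representation \eqref{A10} defining $f_{(\a)}$, and then estimating the resulting remainder integrals directly. You instead treat Theorem~\ref{Thm.quality} as a black box: with $\D=2|x_1-x_2|$ the segment $[x_1,x_2]$ lies in $\Kb_{\D/2}$, so the classical Taylor formula for the smooth function $\partial^\a v_\D$ is available, the remainder is controlled by \eqref{appr.deriv} with $k_0=\rb$, and each occurrence of $\partial^\g v_\D$ at a point of $\Kb$ is swapped for $f_{(\g)}$ using \eqref{appr.deriv.2}. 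Every individual error is $O(\de^{\rb-k}\om(\de))$, and there are only finitely many such terms.

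What the two approaches buy: your argument is more modular and conceptually cleaner, since it shows the Whitney-type Taylor inequalities are formal consequences of the approximation statements already packaged in Theorem~\ref{Thm.quality}, without reopening the Green-function machinery. The paper's approach, working directly with the integral representation, is self-contained (it does not invoke Theorem~\ref{Thm.quality}) but is, as the authors themselves note, ``rather technical.'' One minor point you should add for completeness: your choice $\D=2|x_1-x_2|$ requires $|x_1-x_2|<\tfrac18\diam(\Kb)$ so that $v_\D$ exists; for larger $|x_1-x_2|$ the estimate is trivial since all $f_{(\g)}$ are bounded on $\Kb$ and the right-hand side is bounded below.
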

The, rather technical, proof, is based upon the Taylor expansion 
of the Green 
function substituted into the expression for $f_0$.

\section{The counter-example} In this counter-example, we show 
that if we relax 
the local approximation condition imposed on the
 function $f$ defined on the compact set $\Kb$ in the main 
 theorem, namely, if we only suppose 
 that the approximating functions $\F_{x,\de}$ are just smooth, 
 without
  requiring that they are $\Lc$-harmonic, then 
  the global approximation by
   solutions of this equation may fail.  This effect is surely 
   caused by
   a pathological structure of the set $\Kb$. We note that such 
   counter-examples 
   are possible
    only in the case $\rb\ge1$: in our paper \cite{RZ25}, we have 
    shown that
    for a minor smoothness, i.e., $\rb=0,$  this means, for the 
    approximation with 
    quality $\om(\de),$
     the requirement that the locally approximating 
    functions are $\Lc-$harmonic does not arise.
    
     The compact set $\Kb$ in our example looks as a dish-brush with dense $\Nb$-$2$- dimensional bristles looking in different directions.
     So, for a harmonic function $v_{\de}$ approximating on $\Kb$ the given function $f$, 
    all second derivatives of $v_{\de}$ should approximate  all second derivatives of $f,$ therefore, if $f$ is a trace 
    on $\Kb$ of a \emph{non-harmonic} function, such approximation is impossible. On the opposite, if $\Kb$ were more 
    regular, say a $C^{2}$-surface of codimension $2$, the global approximation by  harmonic functions $v_{\de}$ would impose restriction only upon 
    \emph{some} partial derivatives of $f$,  not causing a contradiction.
    
    Now we render concrete the above description. 

We introduce here a special notation for $\Nb$-$2$-dimensional 
balls in $\R^{\Nb}$: 
this notation will reflect the orientation
of these balls in $\R^{\Nb}.$ We set
\begin{gather*}
  B_{\pk 1}^*= \{x=(x_1,\dots, x_{\Nb-1}, x_{\Nb})\}: |x|\le 
  2^{-\pk-2}, \, 
  x_{\Nb-1}=x_{\Nb}=0; \\\nonumber
 B_{\pk 2}^*= \{x=(x_1,\dots, x_{\Nb-1}, x_{\Nb})\}: |x|\le 
 2^{-\pk-2}, \, 
 x_{\Nb-2}=x_{\Nb-1}=0; \\\nonumber
 B_{\pk 3}^*= \{x=(x_1,\dots, x_{\Nb-1}, x_{\Nb})\}: |x|\le 
 2^{-\pk-2}, \, 
 x_{\Nb-2}=x_{\Nb}=0, \\\nonumber
 B_{\pk \io}=B_{\pk \io}^*+\left(\frac12(2^{-\pk-1}+2^{-\pk}), 
 0,\dots,0\right),\,\pk=0,1,\dots,\, \io=1,2,3.
\end{gather*}
The compact set $\Kb\subset \R^{\Nb}$ is defined as
\begin{equation*}%\label{48a}
  \Kb=\left(\bigcup_{\io=1}^{3}\bigcup_{\pk=0}^\infty 
  B_{\pk\io}\right)\bigcup 
  \{\mathbb{0}_{\Nb}\}, 
\end{equation*}
where $\{\mathbb{0}_{\Nb}\}$ denotes the origin in $\R^\Nb$.
It is easy to check that this set is  Ahlfors-David-regular of 
dimension  $\Nb$-$2$.

We consider the function
  $f(x)=|x|^2,$ $x\in \Kb$. The same function, considered in 
  $\R^{\Nb}$, serves as 
  a local smooth approximation
  $\F_{\de,x}(y)$ for itself in any neighborhood of any point in 
  $\Kb$, for any level 
  of smoothness, since
    $\F_{\de,x}(y)-f(y)\equiv 0.$ The only shortcoming, compared 
    with the 
    conditions of Theorem \ref{MainTheorem},
     is that the approximating function is not a solution of the 
     Laplace equation. 

\begin{thm}\label{counterex} It is impossible to approximate 
$f(x)$ in the sense 
of Theorem \ref{MainTheorem}  with $\rb=2,$  and 
$\om(\de)=\de^{\s},$ $0<\s<1$, by 
harmonic functions, this means,
 by solutions of the equation $\Lc v_\de\equiv-\D v_\de=0$,
\end{thm}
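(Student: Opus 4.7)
I argue by contradiction. Suppose that $v_\de$ satisfying \eqref{appr.function} with $\Lc=-\Delta$, $\rb=2$, $\om(\de)=\de^\s$ do exist. Setting $\F_{x,\de}(y):=v_\de(y)$ for every $x\in\Kb$ shows, via the ``if'' direction of Theorem \ref{MainTheorem}, that $f=|x|^2$ belongs to $\Hc_{\Lc}^{2+\om}(\Kb)$, and Theorem \ref{Thm.quality}, applied with $k_0=\rb=2$ (which is legitimate since $-\Delta$ has constant coefficients), delivers the sharp third-derivative bound $\|\nabla^3 v_\de\|_{\Kb_{\de/2}}\le C\de^{\s-1}$. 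For the geometric input, let $c_\pk=(3\cdot 2^{-\pk-2},0,\ldots,0)$ and note that $0\in B^*_{\pk\iota}$ for each $\iota\in\{1,2,3\}$, so $c_\pk$ lies simultaneously in all three bristles $B_{\pk\iota}\subset\Kb$. Write $P_\iota$ for the $(\Nb-2)$-dimensional affine plane containing $B_{\pk\iota}$: then $P_1$ is spanned by $e_1,\ldots,e_{\Nb-2}$, $P_2$ by $e_1,\ldots,e_{\Nb-3},e_\Nb$, and $P_3$ by $e_1,\ldots,e_{\Nb-3},e_{\Nb-1}$, so every coordinate vector $e_j$ lies in at least one $P_\iota$.

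\textbf{Taylor expansion.} Set $w_\de:=v_\de-f$. Because $f$ is quadratic and $v_\de$ is harmonic in $\Kb_\de$, one has $\nabla^3 w_\de=\nabla^3 v_\de$ and $\Delta w_\de\equiv-2\Nb$ throughout $\Kb_\de$. Fix $\pk=0$ and let $e$ be a unit vector in some $P_\iota$. For $|h|\le 2^{-\pk-2}$ the points $c_\pk\pm he$ lie on $B_{\pk\iota}\subset\Kb$, so the approximation estimate yields $|w_\de(c_\pk\pm he)|\le C\de^{2+\s}$. Second-order Taylor expansion of $w_\de$ along the segment $c_\pk+the$, $t\in[-1,1]$, combined with the third-derivative bound applied pointwise on this segment (which lies in $\Kb\subset\Kb_{\de/2}$), produces
\begin{equation*}
   h^2(\nabla^2 w_\de(c_\pk))(e,e)=\bigl[w_\de(c_\pk+he)+w_\de(c_\pk-he)-2w_\de(c_\pk)\bigr]+O\bigl(|h|^3\de^{\s-1}\bigr).
\end{equation*}
Choosing $h=\de$ (legitimate once $\de\le 2^{-\pk-2}$) balances the bracketed term and the remainder at order $\de^{2+\s}$, giving $|(\nabla^2 w_\de(c_\pk))(e,e)|\le C'\de^{\s}$. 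Letting $e$ run through all coordinate directions covers each $e_j$ via some $P_\iota$, so $|\partial^2_{x_jx_j}w_\de(c_\pk)|\le C'\de^\s$ for every $j$; summing yields $|\Delta w_\de(c_\pk)|\le \Nb C'\de^\s$, which contradicts $\Delta w_\de\equiv-2\Nb$ for $\de$ sufficiently small.

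\textbf{Main obstacle.} The delicate step is extracting second-derivative control out of pure $C^0$-approximation on the very thin set $\Kb$: this crucially requires the \emph{sharp} third-derivative bound of Theorem \ref{Thm.quality}, since the crude harmonic interior estimate $|\nabla^3 v_\de|\le C\de^{-3}$ would only give $|(\nabla^2 w_\de)(e,e)|\le C\de^{(\s-4)/3}$, which diverges as $\de\to 0$. The equally essential geometric input is the ``dish-brush'' structure of $\Kb$: the three orthogonal types of bristles share the common centre $c_\pk$ and their tangent planes jointly span $\R^\Nb$, so the entire Laplacian of $v_\de$ can be pinned down at a single point of $\Kb$.
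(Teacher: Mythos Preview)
Your proof is correct and follows essentially the same strategy as the paper: a telescoping argument yields the sharp bound $|\nabla^3 v_\de|\le C\de^{\s-1}$ (you invoke Theorem~\ref{Thm.quality}, whose proof of \eqref{appr.deriv} uses only the properties in \eqref{appr.function}; the paper restates this as Lemma~6.2), and then second-order information at the bristle centre $c_\pk=\tb_\pk$ is extracted in every coordinate direction to contradict harmonicity. Your finite-difference formulation of the second step is a mild streamlining of the paper's Taylor-polynomial version, and fixing $\pk=0$ while letting $\de\to 0$ is equivalent to the paper's choice of taking $\de=2\de_\pk$ and letting $\pk\to\infty$.
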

In other words, for such a wild set $\Kb,$ one cannot approximate on $\Kb$ the non-harmonic function $f(x)$ by harmonic functions, even locally.  
\begin{proof}
Suppose that the approximation in question is possible, thus, for 
any 
$\de\in(0,1),$ there exists a  function
$v_\de$ such that, 
\begin{equation}\label{49}
  |v_\de(x)-f(x)|\le \mathbbm{c}\de^{2+\s}, \, x\in\Kb,
\end{equation}
\begin{equation}\label{50}
  |v_\de(x)-v_{\frac{\de}{2}}(x)|\le \mathbbm{c}\de^{2+\s}, \, 
  x\in\Kb_{\frac{\de}{2}},
\end{equation}
with some $\mathbbm{c}$ not depending on $\de,$ and
\begin{equation}\label{51}
  \D v_{\de}(x)=0, \, x\in\Kb_{\de}.
\end{equation}
 We establish the following property.
 \begin{lem}\label{lem.counter} Under the assumptions 
 \eqref{49}-\eqref{51}, the 
 function $v_{\de}$ must
 satisfy the  estimate: 
 \begin{gather}\label{52}
 |\nabla^3 v_{2\de_{\pk}}(x)|\le c 
 \mathbbm{c}\de_{\pk}^{\s-1},\,\\\nonumber 
 x\in U_{\pk}:=(\tb_{\pk}+B_{\de_{\pk}}(\mathbb{0}), \, 
 \tb_{\pk}=(\frac{1}{2}(2^{-\pk-1}+2^{-\pk}),0,\dots,0), \, 
 \de_{\pk}=2^{-\pk-2}.
 \end{gather}
 \end{lem}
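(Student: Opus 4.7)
The plan is to establish the bound by a dyadic telescoping of the approximating family $\{v_{2^j\de_\pk}\}_j$ combined with the classical interior derivative estimate $|\nabla^3 u(x)|\le C R^{-3}\|u\|_{L^\infty(B_R(x))}$ for a function $u$ harmonic in $B_R(x)$. This is essentially the argument used to derive \eqref{appr.deriv} in the proof of Theorem \ref{Thm.quality}, specialized to $\Lc=-\D$, $\rb=k_0=2$ and $\om(\de)=\de^\s$; since the Laplacian has smooth coefficients and $U_\pk=B_{\de_\pk}(\tb_\pk)\subset\Kb_{\de_\pk}$ (because $\tb_\pk\in\Kb$ as the center of the disks $B_{\pk\io}\subset\Kb$), one could in fact just quote Theorem \ref{Thm.quality} directly. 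For transparency I would prefer to reprove it in this simpler setting, as follows.

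Fix $x\in U_\pk$ and choose $K$ to be the largest integer with $2^K\de_\pk<\tfrac14\diam(\Kb)$. Write the dyadic identity
\begin{equation*}
  v_{2\de_\pk}(x)=v_{2^K\de_\pk}(x)-\sum_{j=1}^{K-1}w_j(x),\qquad w_j:=v_{2^{j+1}\de_\pk}-v_{2^j\de_\pk}.
\end{equation*}
Each $w_j$ is harmonic in $\Kb_{2^j\de_\pk}$ (both terms are), and the hypothesis \eqref{50} applied with $\de=2^{j+1}\de_\pk$ gives $\|w_j\|_{L^\infty(\Kb_{2^j\de_\pk})}\le C\mathbbm{c}(2^{j+1}\de_\pk)^{2+\s}$. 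Since $\tb_\pk\in\Kb$, the ball $B_{(2^j-1)\de_\pk}(x)$ lies inside $B_{2^j\de_\pk}(\tb_\pk)\subset\Kb_{2^j\de_\pk}$ for every $j\ge 1$, so the interior derivative estimate yields
\begin{equation*}
  |\nabla^3 w_j(x)|\le C\bigl((2^j-1)\de_\pk\bigr)^{-3}\mathbbm{c}(2^{j+1}\de_\pk)^{2+\s}\le C'\mathbbm{c}(2^j\de_\pk)^{\s-1}.
\end{equation*}
Summing over $j\ge 1$ gives a geometric series with ratio $2^{\s-1}<1$ (recall $\s<1$), hence $\sum_j|\nabla^3 w_j(x)|\le C''\mathbbm{c}\de_\pk^{\s-1}$.

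The main technical obstacle is the uniform control of the base term $v_{2^K\de_\pk}$: the approximation \eqref{49} only bounds $v_{2^K\de_\pk}$ on $\Kb$, not on its $2^K\de_\pk$-neighborhood, so the maximum principle cannot transfer the bound on $\Kb$ to a macroscopic ball around $\tb_\pk$ directly. This is precisely the same difficulty already handled in Theorem \ref{Thm.quality}; one way to bypass it is to use the pointwise estimate $|v_{2^K\de_\pk}(\tb_\pk)|\le|f(\tb_\pk)|+C\mathbbm{c}\le C$ (from \eqref{49} applied at $\tb_\pk\in\Kb$), combined with a Harnack-type propagation using the density of $\Kb$ in $B_{2^K\de_\pk}(\tb_\pk)$, to obtain a uniform bound on $v_{2^K\de_\pk}$ throughout a ball of macroscopic size. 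The interior estimate then gives $|\nabla^3 v_{2^K\de_\pk}(x)|\le C$, which is absorbed into $\mathbbm{c}\de_\pk^{\s-1}$ as $\de_\pk\to 0$ since $\s<1$. Combining with the telescoping bound above delivers $|\nabla^3 v_{2\de_\pk}(x)|\le c\mathbbm{c}\de_\pk^{\s-1}$ on $U_\pk$, as claimed.
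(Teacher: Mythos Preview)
Your telescoping plus interior-derivative argument is exactly the paper's proof: the paper writes $v_{2\de_\pk}=v_{2^{N+1}\de_\pk}+\sum_{k=1}^N\f_k$ with $\f_k=v_{2^k\de_\pk}-v_{2^{k+1}\de_\pk}$, bounds $|\nabla^3\f_k|$ via the Poisson formula by $c\mathbbm{c}(2^k\de_\pk)^{\s-1}$, and sums the geometric series.

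The only divergence is the base term. The paper simply asserts $|\nabla^3 v_{2^{N+1}\de_\pk}|\le c$; you flag this as an obstacle and propose a ``Harnack-type propagation using the density of $\Kb$''. That suggestion is shaky (Harnack is for nonnegative harmonic functions, and $\Kb$ is $(\Nb{-}2)$-dimensional, hence far from dense in a ball), and in any case unnecessary. Because $\de_\pk=2^{-\pk-2}$ is dyadic, you can choose the top level so that $2^{K}\de_\pk$ equals a \emph{fixed} value $\de^*\in(0,1)$ independent of $\pk$ (e.g.\ $K=\pk+1$, $\de^*=\tfrac12$). Then $v_{\de^*}$ is a single harmonic function on the fixed open set $\Kb_{\de^*}$, and since $\bigcup_\pk U_\pk$ is relatively compact in $\Kb_{\de^*}$, its third derivatives are bounded there by a constant not depending on $\pk$. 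This is the content behind the paper's one-line assertion.

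One small correction: you cannot ``just quote Theorem~\ref{Thm.quality}''. That theorem (and the estimate \eqref{appr.deriv}) is proved for the specific approximants $v_\de$ \emph{constructed} in Section~\ref{sect. appr.}, whereas here you are working under the contradiction hypothesis with an arbitrary family satisfying \eqref{49}--\eqref{51}.
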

 \begin{proof}
 To prove \eqref{52}, we  denote by  $\f_k(x), \, x\in U_{\pk}$, 
 the function 
 $\f_k(x)=v_{2^k\de_m}(x)-v_{2^{k+1}\de_m}(x)$,\,
  $k=1,\dots,N$, where $N$ is chosen so that $1<2^N\de\le 2$. 
  Using this function, 
  we can represent $v_{2\de_{\pk}}$ and its order 3 gradient as
  
  \begin{equation}\label{53}
    v_{2\de_{\pk}}=v_{2^{N+1}\de}+\sum_{k=1}^N\f_k,\, \mbox{and}\,  
    \nabla^3v_{2\de_{\pk}}=\nabla^3v_{2^{M+1}\de}+\sum_{k=1}^M\nabla^3\f_k.
  \end{equation}
  Due to the definition of $N$, we have 
  $|\nabla^3v_{2^{N+1}\de}|\le c$. For the functions 
  $\f_k$, we have the estimate, by the assumption
   \eqref{50}:
 
   \begin{equation*}%\label{54}
   |\f_k(x)|\le c \mathbbm{c} (2^k\de_{\pk})^{2+\s}, 
   x\in\Kb_{2^k\de_{\pk}}.
   \end{equation*}
   Derivatives of the function $\f_k$ which is  harmonic in the 
   ball 
   $U_{k,\pk}=\tb_{\pk}+B_{2^k\de_{\pk}}(\mathbb{0})\subset\Kb_{2^k\de_{\pk}}$, 
   can be estimated 
   using the Poisson formula:
   
   \begin{equation}\label{55}
     |\nabla^3\f_k(x)|\le c 
     \mathbbm{c}\frac{(2^k\de_{\pk})^{2+\s}}{(2^k\de_{\pk})^3}=c\mathbbm{c} 
     2^{k(\s-1)}\de_{\pk}^{\s-1}.
   \end{equation}
   Now, it follows from \eqref{53}, \eqref{55} that 
   
   \begin{equation*}
     |\nabla^3 v_{2\de_{\pk}}(x)|\le 
     c\mathbbm{c}\de_{\pk}^{\s-1}\sum_{k=1}^M 
     2^{k(\s-1)}+c\le c'\de_{\pk}^{\s-1},
   \end{equation*}
   and this proves Lemma \ref{lem.counter}.
   \end{proof}
   Having this estimate, we proceed with our example. We introduce 
   the function
 
   \begin{equation}\label{56}
     P_2(x,\tb_{\pk})=v_{2\de_{\pk}}(\tb_{\pk})+\sum_{|\a|=1}\frac{1}{\a!}\partial_{\a}v_{2\de_{\pk}}(\tb_{\pk})(x-\tb_{\pk})^{\a}+
     \sum_{|\a|=2} 
     \frac{1}{\a!}\partial_{\a}v_{2\de_{\pk}}(\tb_{\pk})(x-\tb_{\pk})^{\a};  
     \end{equation}
this is the second degree Taylor polynomial for 
$v_{2\de_{\pk}}(x)$  at the point 
$x=\tb_{\pk}$.
We use the integral form of the remainder term in the Taylor 
formula, to 
 express the difference of values  at the points $x$ and 
 $\tb_{\pk}$:
 which, according to \eqref{56}, gives
 
 \begin{equation*}%\label{57}
   |\nabla^2v_{2\de_{\pk}}(x)-\nabla^2 P_2(x,\tb_{\pk})|\le c 
   \de_{\pk}\sup_{y\in 
   \tb_{\pk}+B_{\de_{\pk}}(\mathbb{0}_{\Nb})}|\nabla^3 
   v_{2\de_{\pk}}(y)|\le c\mathbbm{c}\de_{\pk}\cdot 
   \de_{\pk}^{\s-1}=c\mathbbm{c}\de_{\pk}^{\s}.
 \end{equation*}
 In particular, this implies 
 
 \begin{equation}\label{58}
   |\D v_{2\de_{\pk}}(x)-\D P_2(x,\tb_{\pk})|\le 
   c\mathbbm{c}\de_{\pk}^{\s}, \, x\in 
   \tb_{\pk}+B_{\de_{\pk}}(\mathbb{0}_{\Nb}).
 \end{equation}
 Combining \eqref{51} and \eqref{58}, we obtain
 
 \begin{equation}\label{59}
   |\D P_2(x,\tb_{\pk})|\le c\mathbbm{c}\de_m^{\s}, \, x\in 
   \tb_{\pk}+B_{\de_{\pk}}(\mathbb{0}_{\Nb}).
 \end{equation}
 Next,  we represent $P_2(x,\tb_{\pk})$ in a different form, with 
 some terms, second 
 order homogeneous, separated:
 
 \begin{gather}\label{60}
 P_2(x,\tb_{\pk})= \sum_{j=1}^\Nb 
 b_{j}(\tb_{\pk},\de_{\pk})(x_j-\tb_{\pk,j})^2+\tilde{P}_2(x,\tb_{\pk}),\,\\\nonumber
  \tb_{\pk}=(\frac{1}{2}(2^{-\pk}+2^{-\pk-1}),0,\dots,0),
 \end{gather}
 where the polynomial $\tilde{P}_2(x,\tb_{\pk})$ contains terms of degree 
 0 and 1 in $x-\tb_{\pk}$ 
 as well as mixed
   terms of the form $c (x_j -\tb_{\pk,j})(x_j -\tb_{\pk,j'})$ ,
 $j\ne j'$.
 Since $\D \tilde{P}_2=0,$  \eqref{59}, \eqref{60} imply 
 
 \begin{equation}\label{61}
 |\sum_{j=1}^\Nb b_{j}(\tb_{\pk},\de_{\pk})|\le 
 c\mathbbm{c}\de_{\pk}^{\s}.
 \end{equation}
 We recall that the coefficients $b_{j}(\tb_{\pk},\de_{\pk})$ in 
 \eqref{61} are constant
 times the second derivatives of $v_{2\de_{\pk}}(\tb_{\pk}),$
 $b_{j}(\tb_{\pk},\de_{\pk})=\frac12 (\partial^2_{j,j} 
 v_{2\de_{\pk}})(\tb_{\pk}).$
 The set 
 $\Kb\cap(\tb_{\pk}+\mathbf{{B_{\de_{\pk}}(\mathbb{0}_{\Nb})}})$ 
 contains the 
 closed intervals
 
 \begin{gather*}
   \Ib_{1,\pk}=[2^{-\pk-1},2^{-\pk} ]\times  
   \{\mathbb(0)_{\Nb-1}\},\,\mbox{and}\, 
   \\\nonumber
   \Ib_{j,\pk}=\{\frac{1}{2}(2^{-\pk}+2^{-\pk-1}), 0,\dots, 
   \overset{j}{ 
   \overbrace{[-\de_{\pk},\de_{\pk}]}},0,\dots,0.\},\, 
   \mbox{for}\, j>1 ,
 \end{gather*}
 where the overset $\overset{j}{\dots}$ denotes the component at 
 the $j$-th 
 co-ordinate.
 
 Now, it follows from \eqref{49} that 
 
 \begin{equation*}%\label{62}
   |v_{2\de_{\pk}}(x)-|x|^2|\le c\mathbbm{c}\de_{\pk}^{2+\s}, \, 
   x\in\bigcup_{j=1}^\Nb 
   \Ib_{j,\pk}.
 \end{equation*}
  On the other hand, since $P_2$ is the quadratic Taylor 
  polynomial for 
  $v_{2\de_{\pk}}(x),$ we should have
  
  \begin{gather*}%\label{63}
    |v_{2\de_{\pk}}(x)-P_2(x,\tb_{\pk})|\le c 
    \de_{\pk}^3\sup_{y\in\tb_{\pk}+B_{\de_{\pk}}(\mathbb{0}_{\Nb})}|\nabla^3(y)|\le 
    \\\nonumber
    c\mathbbm{c} 
    \de_{\pk}^3\de_{\pk}^{\s-1}=c\mathbbm{c}\de_{\pk}^{\s+2}, \, 
    x\in 
    \tb_{\pk}+B_{\de_{\pk}}(\mathbb{0}_{\Nb}). 
  \end{gather*}
  The latter two inequalities imply
  
  \begin{equation}\label{64}
    |P_2(x,t_{\pk})-|x|^2|\le  c\mathbbm{c}\de_{\pk}^{2+\s}.
  \end{equation}
  Since both polynomials  in \eqref{64} have degree 2 and 
  \eqref{64} must hold for 
  any $\de_{\pk}$, we have a contradiction:
  $P_2(x,t_{\pk})$ is harmonic, while $|x|^2$ is not. 
 \end{proof}

 \appendix
 \section{Estimates for  integrals in Section \ref{Sect.Integrals}. Proofs}
 \emph{Proof} of Lemma \ref{lem2}.\\
For $y\in B_{2\de}(x)$, we have $d(y)\le c\de,$ 
  therefore,
\begin{equation*}
  \int_{B_{2\de}(x)}\frac{d(y)^{\rb-2}\om(d(y))}{|x-y|^{\Nb-2}} 
  dy\le C 
  \de^{\rb}\int_{B_{2\de}(x)}\frac{d(y)^{-2}\om(d(y))}{|x-y|^{\Nb-2}} 
  dy.
\end{equation*}
For the last integral, we can use the estimate in \cite{RZ25}, see 
there Lemma 6.2, 
which gives us the required inequality.\hfill{$\square$}

\emph{Proof} of Corollary \ref{Cor.L2}\\
    If $y\in {B_{2\de}(x)}$, then $\de^{2-\Nb}\le C 
    |x-y|^{2-\Nb}$, therefore, 
    \eqref{21} implies
\begin{gather*}%\label{22.a}
 \int_{B_{2\de}(x)} d(y)^{\rb-2}\om(d(y))dy= 
 \de^{\Nb-2}\int_{B_{2\de}(x)} \frac{ 
 d(y)^{\rb-2}\om(d(y))}{\de^{\Nb-2}}dy\\\nonumber
\le C \de^{\Nb-2}\int_{B_{2\de}(x)} \frac{ 
d(y)^{\rb-2}\om(d(y))}{|x-y|^{\Nb-2}}dy\le c 
\de^{\Nb-2+\rb}\om(\de). 
\end{gather*}\hfill{$\square$}

\emph{ Proof} of Lemma \ref{lemA1} We set $\de_1=c_0\de$. The integral in 
   \eqref{int.4} can be 
   represented as $I_{k}=\sum_{\ka=1}^\infty I_{(\ka)}$, where 
   $I_{(\ka)}$ is the 
   integral over the spherical annulus 
   $\Tc_\ka=B_{2^{-\ka+1}\de_1}(x)\setminus 
   B_{2^{-\ka}\de_1}(x). $
For the latter integral, we have
\begin{gather}\label{A5}
  I_\ka\le c 
  \frac{1}{(2^{-\ka}\de_1)^{\Nb-2+k}}\int_{\Tc_{\ka}}d(y)^{k-2}\om(d(y))dy 
  \le
  \\\nonumber
  c\frac{1}{(2^{-\ka}\de_1))^{\Nb-2+k}}\int_{B_{2^{-\ka+1}\de_1}(x)}d(y)^{k-2}\om(d(y))dy\le\\\nonumber
  c\frac{1}{(2^{-\ka}\de_1))^{\Nb-2+k}} 
  (2^{-\ka}\de_1)^k\int_{B_{2^{-\ka+1}\de_1}(x)} 
  d(y)^{-2}\om(d(y))dy\le 
  \\\nonumber  
  c\frac{1}{(2^{-\ka}\de_1))^{\Nb-2+k}}\int_{B_{2^{-\ka+1}\de_1}(x)}d(y)^{k-2}\om(d(y))dy\le\\\nonumber
  c\frac{1}{(2^{-\ka}\de_1))^{\Nb-2+k}} (2^{-\ka}\de_1)^k 
  \int_{B_{2^{-\ka+1}\de_1}(x)}\frac{(2^{-\ka} 
  \de_1)^{\Nb-2}}{|x-y|^{\Nb-2}}d(y)^{-2}\om(d(y))dy\le\\\nonumber
 c\frac{1}{(2^{-\ka}\de_1))^{\Nb-2+k}} (2^{-\ka}\de_1)^k (2^{-\ka 
 \de_1})^{\Nb-2}\int_{B_{2^{-\ka+1}\de_1}(x)} 
 \frac{d(y)^{-2}\om(d(y))}{|x-y|^{\Nb-2}}dy\le\\\nonumber
\le c \om(2^{-\ka}\de_1)
\end{gather}
(the last inequality was established in  Lemma 6.2 in 
\cite{RZ25}).
We sum over $\ka$ the expressions in \eqref{A5}, which gives
\begin{gather*}%\label{A6}
I_k\le  c\sum_{\ka=1}^\infty \om(2^{-\ka}\de_1)\le 
C\int_1^\infty\om(2^{-\tau}\de_1)d\tau=\\\nonumber
c\int_{0}^{\frac12}\om(t\de_1)\frac{dt}{t}=c\int_0^{\de_1/2}\om(t)t^{-1}dt\le 
c\om({\de_1}/{2})\le c\om(\de).
\end{gather*}\hfill{$\square$}

  \emph{Proof} of Corollary \ref{Cor2}
   In fact, by Lemma \ref{lemA1}, we have   
   \begin{equation*}
     J_k\le c 
     \de^{\rb-k}\int_{B_{c_0\de}(x)}\frac{d(y)^{k-2}\om(d(y))}{|y-x|^{\Nb-2+k}}dy\le 
     c \de^{\rb-k}\om(\de).
   \end{equation*}\hfill{$\square$}
   
\emph{Proof} of Lemma \ref{Lemma2}.
Again, we represent the integral in \eqref{A7} as the 
sum of integrals 
$I_{\ka}$ over, now expanding,  spherical annuli 
$\Tc_{\ka}=B_{2^\ka 
\de(x)}\setminus B_{2^{\ka-1} \de(x)}$. For a single integral 
$I_\ka$, we have, 
similarly to how we derived \eqref{A5},
\begin{gather}\label{A8}
  I_{\ka}\le c\frac{1}{(2^{\ka}\de_1)^{\Nb-1+k}}\int_{\Tc_{\ka}} 
  d(y)^{k-2}\om(y)dy\le\\\nonumber
  c\frac{1}{(2^\ka\de_1)^{\Nb-1+k}}\int_{B^{2^{\ka}\de_1}(x)}d(y)^{k-2}\om(d(y))dy\le\\\nonumber
    c\frac{1}{(2^\ka\de_1)^{\Nb-1+k}} 
    (2^{\ka}\de_1)^{k+\Nb-2}\om(2^{\ka}\de_1).
\end{gather}
Next, \eqref{A8} implies
\begin{equation*}%\label{A9}
I_{(\ka)}\le c\frac{1}{2^{\ka}}\om(2^\ka\de_1).
\end{equation*}
Finally, similarly to the preceding calculations, we have
\begin{gather*}
  \sum_{\ka=1}^{\infty}I_{(\ka)}\le c 
  \sum_{\ka=1}^{\infty}\frac{1}{2^{\ka}\de_1}\om(2^\ka\de_1)\le 
  \int_1^{\infty}(2^{\tau}\de_1)^{-1}\om(2^{\tau}\de_1)d\tau=\\\nonumber
c\int_{2\de_1}^{\infty}t^{-1}\om(t)\frac{dt}{t}\le c 
\frac{\om(2\de_1)}{2\de_1}\le 
c\frac{\om(\de_1)}{\de_1}.
\end{gather*}\hfill{$\square$}

\emph{Proof} of Lemma \ref{Lem3a}.\\ Estimates follow directly from Lemmas \ref{lemA1} and 
\ref{lem2}.\hfill{$\square$}

\emph{Proof} of Lemma \ref{Lem4}. It goes over the same lines as the proof of Lemma 
\ref{Lemma2}.
The integral in \eqref{eq.Lem4} can be represented as
\begin{equation*}%\label{eq.Lem4.1}
  \Jc(x)= \sum_{j\ge1}\Jc_j(x):= 
  \sum_{j=1}^{\infty}\int_{\Ac_j}d(y)^{\rb-2}\om(d(y))|x-y|^{1-\Nb}dy,
\end{equation*}
where  $\Ac_j$ is the spherical annulus, 
\begin{equation*}
\Ac_j=B_{2^j\de}(x)\setminus B_{2^{j-1}\de}(x)
\end{equation*}
 For $j\ge1,$ $y\in \Ac_j(x)$, by  \eqref{43},
 we have $\le C |x-y|^{1-\Nb}\le \de (2^j\de)^{1-\Nb},$ therefore,
 
 \begin{equation*}%\label{B5}
  \Jc_j(x)\le c (2^j\de)^{1-\Nb}\int_{\Ac_j(x)}d(y)^{\rb-2}\om(d(y))dy.
 \end{equation*}
 Since the volume of $\Ac_j(x)$ is no greater than $C(2^j\de)^{\Nb-1}$,
 we have 
 \begin{gather}\label{B6}
   \Jc_j(x)\le c \de(2^j\de)^{1-\Nb}(2^j\de)^{\rb+\Nb-2}\om(2^j\de)=\\ \nonumber
   (2^j\de)^{\rb-1}\om(2^j\de).
 \end{gather}
The sum in \eqref{B6} is, in fact, finite. it contains only terms with $2^j\de\le 1$, 
therefore $\Jc_j(x)\le C (2^j\de)^{-1}$. After we sum the expression in \eqref{B6} over $j\ge 
1$, we obtain

  \begin{gather*}%\label{B8A}
\Jc(x)\le  \de^{-1} \sum_j 2^{-j}\om(2^j\de)\le\\\nonumber
   c\de^{-1}\int_0^{\infty}\frac{\om(2^t\de)}{2^t}dt=\\\nonumber 
   \de^{-1}c'\int_1^\infty\frac{\om(\tau\de)}{\tau^2}d\tau\le c''\de^{-1}\om(\de).
 \end{gather*}
\hfill{$\square$}
 \section{The Green function and its derivatives}\label{Sect.A1}
\subsection{General}
In this appendix, we establish some estimates for the Green 
function $G(x,y;\vs)$ of a 
second order 
elliptic operator and for its derivatives in the unit ball
$B\subset{\R^{\Nb}}$ with smooth boundary. 
Derivatives in the variables $x,y$ are well studied; the estimates 
for first  and second derivatives, needed for 
our applications, 
are contained in the classical paper \cite{Widman}. We, however, need estimates for derivatives of a 
higher
order; moreover, in Sect. \ref{Sect.K(x,y)} we
consider  operators whose coefficients depend  on an extra 
parameter $\vs$ in a small ball  $\Bc\subset\R^\Nb$, and we need estimates for derivatives of the 
Green function in 
all three variables. 

The proofs will be presented further on in 
this Appendix. Since we consider only the unit ball, we omit the 
superscript $\circ$ in the notation of the Green function.

\subsubsection{Schauder estimates}

Note that the estimates of derivatives not involving $\vs$ are 
already contained in \cite{Kras.Sing}, see Theorem \ref{Kras.thm}. So, 
it is only the derivatives    $G_\vs, G_{x\vs}, G_{\vs\vs}, G_{xx\vs}, 
G_{xy\vs}, G_{x\vs\vs}$ that we need to consider, with points $x,y$ well 
separated. Namely, the point $x$ should be in a small neighborhood 
of the centerpoint $\mathbb{O}$  of the ball, while $y$ 
should be near some point $y^{\circ}$ on the boundary of the  ball. Thus, the singularity of the
Green function at $x=y$ is cut-away. 

The result we are going to use systematically is a consequence of 
Schauder estimates. We formulate the particular cases of interest, 
for a ball $\Bc_r$ of radius $r<1$, derivatives of order 2 ($k=2$).

\begin{thm}\label{Thm.Schauder}$[$Interior estimate$]$
 Let $\Lc$ be a second order elliptic  operator in $B_r$, with 
 coefficients in the H{\"o}lder class $C^{k+\g}$, $0<\g<1$ and let 
 $f\in C^{k+\g}(B_{\theta r}), 0<g<1, \, \theta<1$. 
 Then for the solution $u(x)$ of the equation $\Lc u=f$ in $\Bc_r$
the estimate holds
\begin{equation}\label{Schauder3}
  \pmb{|}u\pmb{|}_{2+k+\g,B_{\theta r}}\le \Cb 
  \left(\pmb{|}f\pmb{|}_{k+\g,B_r}+\|u\|_{C(\Bc_r)}\right),
\end{equation}
where the notations as $ \pmb{|}u\pmb{|}_{\g,B_r}$ denote the norm of $u$ in the 
H{\"o}lder class $C^{\g, B_r}$ etc., and the constant in 
\eqref{Schauder3} depends on the ellipticity constant of the 
operator $\Lc$, and H{\"o}lder norms of the coefficients and the parameter $\theta<1.$
\end{thm}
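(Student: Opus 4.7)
The plan is to prove the interior Schauder estimate by the classical Korn--Schauder method of freezing the leading coefficients. First I would dispose of the model case: if $\Lc_0=-\nabla\cdot(\ab_0\nabla)$ is a constant-coefficient elliptic operator on $\R^\Nb$, the Newtonian potential $\Nc_0$ built from its fundamental solution satisfies the Calder\'on--Zygmund H\"older bound $\pmb{|}\nabla^{2+k}\Nc_0 f\pmb{|}_{\g}\le C\,\pmb{|}f\pmb{|}_{k+\g}$; combined with the standard interior estimate for $\Lc_0$-harmonic functions (all of whose derivatives are controlled by the sup norm), this yields \eqref{Schauder3} in the constant-coefficient case via a suitable cutoff decomposition $u = \Nc_0(\chi f) + h$.

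For the variable-coefficient case I would fix a point $x_0\in B_{\theta r}$ and rewrite the equation on a small ball around $x_0$ as
\begin{equation*}
\Lc(x_0,\partial_x)u(x) = f(x) + \bigl(\Lc(x_0,\partial_x)-\Lc(x,\partial_x)\bigr)u(x),\qquad x\in B_\rho(x_0).
\end{equation*}
Since the coefficients of $\Lc$ lie in $C^{k+\g}$, the perturbation term has $C^{k+\g}(B_\rho(x_0))$-norm bounded by $C\rho^{\g}\,\pmb{|}u\pmb{|}_{2+k+\g,B_\rho(x_0)}$, because the difference $\ab(x)-\ab(x_0)$ vanishes to first order in a quantitative H\"older sense on $B_\rho(x_0)$. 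Applying the constant-coefficient estimate from the previous step to a smooth cutoff of $u$ equal to $1$ on $B_{\rho/2}(x_0)$ then yields the local inequality
\begin{equation*}
\pmb{|}u\pmb{|}_{2+k+\g,B_{\rho/2}(x_0)}\le C\bigl(\pmb{|}f\pmb{|}_{k+\g,B_\rho(x_0)}+\|u\|_{C(B_\rho(x_0))}\bigr)+C\rho^{\g}\,\pmb{|}u\pmb{|}_{2+k+\g,B_\rho(x_0)}.
\end{equation*}

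The main technical obstacle is to absorb the last, dangerous term without any degradation of the constant as $x_0$ approaches $\partial B_r$. I would handle this in the standard way by reformulating the whole argument in the \emph{weighted interior} H\"older seminorms $[u]^{*}_{j+\g,B_r}$ obtained by weighting the usual $C^{j+\g}$-seminorm by $\dist(\cdot,\partial B_r)^{j+\g}$. Choosing $\rho$ proportional to $\dist(x_0,\partial B_r)$ renders the factor $C\rho^{\g}$ a fixed small constant depending only on $\theta$, so the offending term can be moved to the left-hand side within the weighted framework; standard interpolation inequalities of the form $[u]^{*}_{j+\g}\le \e\,[u]^{*}_{2+k+\g}+C_\e\|u\|_{C}$ then convert the resulting weighted bound into the unweighted estimate \eqref{Schauder3} on $B_{\theta r}$. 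The higher-order case $k\ge 1$ is handled inductively by differentiating the equation $k$ times and reducing to the base case, using that the commutators of $\partial^{\a}$ with $\Lc$ are lower-order operators whose coefficients retain sufficient H\"older regularity.
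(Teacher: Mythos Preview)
Your sketch is the standard Korn--Schauder argument and is essentially correct, but you should be aware that the paper does not give any proof of this statement: it simply cites \cite{GT}, Theorem~6.2, and moves on. The theorem is being quoted as a classical tool, not established anew. What you have outlined is, in substance, the proof one finds in Gilbarg--Trudinger (freezing of coefficients, constant-coefficient Newtonian potential estimate, absorption via weighted interior seminorms, and differentiation of the equation to climb in $k$), so there is no divergence in method---only in the level of detail the paper chooses to record.
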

Thus, this interior Schauder estimate states that the solution of a 
second order elliptic equation has, in the H{\"o}lder scale, higher, by 
order 2, smoothness than the right-hand side, with norm controlled by 
the corresponding norm of $f$ in a larger ball.  For the proof, see 
\cite{GT}, Theorem 6.2.

We also need the boundary Schauder estimate. We formulate it for  the 
special case   when the domain is the intersection of two balls. Let 
$B$ be the unit ball and $B_\e$ be a ball of radius $\e,$ 
centered at a point $y^{\circ}\in \G\equiv\partial B$. Denote by $U_\e$ the 
intersection $U_\e=B\cap B_\e$ and by $U_{\e/2}$ the intersection of $B$ 
with the smaller ball with center $y^{\circ}$.
\begin{thm}\label{boundary Schuder}$[$Boundary estimate$]$ Let $u(x)$ be a 
solution of the equation $\Lc u=f$ in $\Dc=U_\e$ with $f\in C^{\g}$, such 
that $u(y)=0$ on $\G\cap B_{r}$. Then 
\begin{equation*}%\label{Schauder02}
  \pmb{|}u\pmb{|}_{k+2+\g, U_{\e/2}}\le C\left(
  \pmb{|}f\pmb{|}_{k+\g,\Dc}+\|u\|_{C(B\cap U_\e)}\right).
\end{equation*}
\end{thm}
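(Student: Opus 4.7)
My plan is to reduce the statement to the standard boundary Schauder estimate for a half-ball by a smooth flattening of $\partial B$ near the point $y^{\circ}$. The boundary of the unit ball $B$ is $C^{\infty}$, so on a neighborhood of $y^{\circ}$ there is a diffeomorphism $\Phi\colon V\to V'$ of class $C^{\infty}$ which sends $\partial B\cap V$ onto a piece of a hyperplane $\{z_{\Nb}=0\}$ and sends $B\cap V$ into the half-space $\{z_{\Nb}>0\}$. After possibly shrinking $\e$, we may assume $U_\e\subset V$, so $\Phi(U_\e)$ is contained in some half-ball $H_\e^+=\{z_{\Nb}>0\}\cap B_{\e'}(0)$ whose flat face contains the image of the Dirichlet portion $\Gamma\cap B_r$.

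First I would push the equation forward under $\Phi$: setting $\tilde u=u\circ\Phi^{-1}$ and $\tilde f=f\circ\Phi^{-1}$, the operator $\Lc$ transforms into a new second order operator $\tilde\Lc$ in divergence form whose coefficient matrix equals $(D\Phi)\ab(\Phi^{-1}(\cdot))(D\Phi)^{\top}|\det D\Phi^{-1}|$, with a smooth lower order correction. Since $\Phi$ is $C^{\infty}$, uniform ellipticity is preserved and the $C^{k+\g}$-norms of the coefficients of $\tilde\Lc$ are controlled by those of $\ab$. Moreover, composition with $\Phi^{\pm 1}$ is a bounded operation in the H{\"o}lder scale, so
\[
\pmb{|}\tilde f\pmb{|}_{k+\g,\Phi(\Dc)}\le C\pmb{|}f\pmb{|}_{k+\g,\Dc},\qquad \pmb{|}\tilde u\pmb{|}_{k+2+\g,\Phi(U_{\e/2})}\asymp \pmb{|}u\pmb{|}_{k+2+\g,U_{\e/2}}.
\]
The function $\tilde u$ solves $\tilde\Lc\tilde u=\tilde f$ in $\Phi(U_\e)$ and vanishes on the flat portion of the boundary.

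Next I would invoke the classical boundary Schauder estimate for a second order elliptic equation in a half-ball with Dirichlet data on the flat face; this is Theorem 6.6 of Gilbarg--Trudinger (combined with the obvious localization inside $H_\e^+$). It yields
\[
\pmb{|}\tilde u\pmb{|}_{k+2+\g,\,H_{\e'/2}^{+}}\le C\bigl(\pmb{|}\tilde f\pmb{|}_{k+\g,\,H_{\e'}^{+}}+\|\tilde u\|_{C(H_{\e'}^{+})}\bigr),
\]
with $C$ depending on the ellipticity constant, the H{\"o}lder norms of the coefficients, and the geometry of the flattened piece. Substituting back the relations between $\tilde u,u$ and $\tilde f,f$ gives the desired inequality.

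The only genuine work is the bookkeeping of the flattening step: verifying that the Jacobian and second derivatives of $\Phi$ do not destroy ellipticity or blow up H{\"o}lder norms, and that a covering argument (a finite cover of the closure of the portion of $\partial B$ sitting in $\overline{U_{\e/2}}$ by finitely many such chart neighborhoods) allows us to assemble pointwise estimates into the global bound on $U_{\e/2}$. Since the interior of $U_{\e/2}$ not touching $\partial B$ is handled by Theorem \ref{Thm.Schauder}, and the corner where $\partial B$ meets $\partial B_\e$ lies outside the compact subset $\overline{U_{\e/2}}$, no corner regularity issues arise. I expect this last geometric point to be the main subtlety, but it is standard once one chooses $\e$ small enough that $\partial B_\e\cap B$ stays away from $\overline{U_{\e/2}}$.
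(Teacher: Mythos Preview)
Your proposal is correct and follows essentially the same approach as the paper: the paper simply remarks that the result follows immediately from the standard local boundary Schauder estimate for a half-ball (citing Gilbarg--Trudinger, Corollary~6.7) after a change of variables straightening $\partial B$ near $y^{\circ}$. Your write-up spells out precisely this flattening argument together with the covering and interior-vs-boundary bookkeeping.
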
 This result follows immediately from the  main local step in 
the proof of the standard boundary Schauder estimate for a domain having 
a piece of hyperplane as a part of boundary, by means of a change of 
variables straightening the boundary, see, e.g., \cite{GT}, Corollary 
6.7.

\subsection{Proof of Theorem \ref{Th.A1}}\label{Proof of GF estimates}
\begin{proof}
The proof consists of two steps. On the first step, we establish 
estimates for the first and second order derivatives of the Green 
function in $\vs$ variable; here we use Krasovskii's estimates. On the second 
step, by using Schauder estimates we perform a kind of bootstrap and 
derive estimates for derivatives involving variables $x$ and $y$ as 
well. Below, $\partial_{\vs}$ denotes the derivative with respect to any of variables $\vs$.
  
  We start by finding estimates for derivatives of the 
Green 
function $G(x,y;\vs)$ in $\vs$ variable.
We have $\Lc(\vs)\Gb(\vs)=\Ib$, the latter is the identity operator. 
We differentiate this equality in $\vs$ variable;
for the derivative or order  one, we have
\begin{equation}\label{AA1}
\partial_\vs \Gb(\vs)=-\Gb(\vs)\Lc_\vs\Gb(\vs);\, \Lc_\vs=\partial_\vs\Lc,
\end{equation}
the latter is a second order operator with coefficients 
$\ab_\vs=\partial_{\vs}\ab({\vs})$ in $C^{m-1}.$
For the Green function, \eqref{AA1} gives
\begin{equation}\label{AA2}
  \partial_{\vs}G(x,y;\vs)=-\int_{\Bc}G(x,z;\vs)\Lc_\vs(z,\partial_z;\vs)G(z,y;\vs) 
  dz. 
\end{equation}
    We integrate in \eqref{AA2} by parts; the boundary term 
    vanishing due to the Dirichlet boundary conditions for $G$, so
    
    \begin{equation}\label{AA3}
     \partial_{\vs}G(x,y;\vs)=\int_{\Bc}\langle \ab_\vs(z)\nabla_z 
     G(x,z;\vs),\nabla_zG(z,y;\vs)\rangle dz.
    \end{equation}
   For estimating the integral in \eqref{AA3}, we use 
   \eqref{WidmanEst}, which gives
      \begin{equation*}%\label{AA4}
     | \partial_{\vs}G(x,y;\vs)|\le 
     C\int_{\Om}|x-z|^{1-\Nb}|z-y|^{1-\Nb}dz\le C |x-y|^{2-\Nb}\le C,
   \end{equation*}
   since $|x-y|>r_0>0.$
   
   With some more complications, we handle the second derivatives of 
   $G$.
   We differentiate \eqref{AA2} again, which gives

   \begin{gather}\label{AA5}
    \partial^2_{\vs\vs}G(x,y;w)=-\int_{\Bc}G(x,z;\vs)\Lc_{\vs\vs}(z,\partial_z;\vs)G(z,y;\vs) 
    dz\\\nonumber
     +2\int_{\Bc}\int_{\Bc}G(x,z_1;\vs)\Lc_\vs(z_1,\partial_{z_1},\vs)G(z_1,z_2;\vs)\Lc_\vs(z_2,\partial_{z_2};\vs)G(z_2,y;\vs)dz_1dz_2.
   \end{gather}
     Having $|x-y|>\frac34$, we consider three domains in 
     $\Bc\times\Bc$:\\ $\Dc_1=\{(z_1,z_2): |z_1-z_2|>\frac18\},$ \\
     $\Dc_2=\{(z_1,z_2):|z_1-z_2|<\frac14, |z_1-x|>\frac18\},$ \\
     $\Dc_3=\{(z_1,z_2):|z_1-z_2|<\frac14, |z_2-y|>\frac18\}.$
      
  These three domains, obviously, cover $\Bc\times\Bc$. We 
  introduce the decomposition of unity, subordinated to this 
  cover; $\psi_s(z_1,z_2),$ $s=1,2,3$, $\psi_s\in 
  C^{\infty}(\overline{\Bc}\times\overline{\Bc})$, $\psi_s=0$ 
  outside $\overline{\Dc_s}$, $\sum_{s}\psi_s(z_1,z_2)=1$. Order 1 
  and 2  derivatives of $\psi_s$ are bounded. Correspondingly, the 
  integral in \eqref{AA5} splits into three terms, with factors 
  $\psi_s(z_1,z_2), \, s=1,2,3$. We consider each of them.

  For $(z_1,z_2)\in \Dc_1,$ we integrate once by parts both in 
  $z_1$ and $z_2$ variables. We omit terms where the derivatives 
  fall on the cut-off function or on the coefficients of $\Lc$, 
  since these terms have a weaker singularity, and consider the 
  most singular terms, which typically have the form
  \begin{equation}\label{AA6}
    \Mc_1(x,y)=\int_{\Dc_1}\psi_1(z_1,z_2)\partial_{z_1}G(x,z_1;\vs)\partial_{z_1}\partial_{z_2}G(z_1,z_2;\vs)\partial_{z_2}G(z_2,y;\vs)dz_1dz_2,
  \end{equation}
   with bounded coefficients. Due to the estimate \eqref{WidmanEst}, the middle term in \eqref{AA6} is bounded since 
   $|z_1-z_2|>\frac18$ in $\Dc_1$, while two other terms have singularity 
   $O|x-z_1|^{1-\Nb},$ $O|x-z_1|^{1-\Nb},$ with coefficients 
   controlled by order 3 derivatives of the coefficients of 
   operator $\Lc.$ This implies that the term in \eqref{AA5}, 
   corresponding to $\psi_1$, is bounded.
   
   Next, we consider the integral over $\Dc_2.$ We integrate by 
   parts twice in $z_1$ variable and once in $z_2$ variable. 
   Again, omitting less singular terms, where the derivatives fall 
   on the cut-off function or coefficients, we arrive at the 
   integrals of the form
   
   \begin{equation}\label{AA7}
     \Mc_2(x,y)=\int_{\Dc_2}\psi_2(z_1,z_2)\partial_{z_1}^2G(x,z_1;\vs)\partial_{z_2}G(z_1,z_2;\vs)
     \partial_{z_2}G(z_2,y;\vs)dz_1dz_2.
   \end{equation}
       The first factor in the integrand   in \eqref{AA7} is 
       bounded, since here $|x-z_1|>\frac18$, while the other 
       terms, containing only order one derivatives, have 
       singularities of order $1-\Nb,$ thus, $ \Mc_2(x,y)$ is 
       bounded.
       
   Finally, we consider the integral over $\Dc_3.$ Here the last 
   factor in \eqref{AA5} is bounded. We integrate by parts once in 
   the second factor, and, ignoring again the terms where the 
   derivative falls on the cut-off function or coefficients, we 
   have, as the most singular term
   \begin{equation}\label{AA8}
     \Mc_3(x,y)=\int_{\Dc_3}\psi_3(z_1,z_2)\partial_{z_1}G(x,z_1;\vs)\partial_{z_1}G(z_1,z_2;\vs)\partial^2_{z_2}G(z_2,y;\vs)dz_1dz_2.
   \end{equation}
   Here, the last factor is bounded and the first and the second 
   one have singularity of order no greater that $1-\Nb$; all these 
   terms have coefficients depending on derivatives of coefficients of 
   order not higher than 3. Therefore, all three expressions 
   \eqref{AA6},\eqref{AA7},\eqref{AA8} are bounded.
   
   Next we pass to the study of derivatives of the Green function 
   in $x$ and $y$ variables.
   
   With $y$ fixed, and $x$ near the center of $B$, we consider 
   the balls $B_r=B_{r}(\mathbb{0})$ with radius $r$ less than $3/4$ and $B_{\theta r}=B_{\theta r}(\mathbb{0})$. For 
   $x\in B_r,$ we consider the functions $u_0(x)=G(x,y;\vs)$, 
   $u_1(x)=\partial_\vs G(x,y;\vs)$, $u_2(x)=\partial^2_{\vs\vs}G(x,y;\vs).$
   
   As it is already proved, the functions $u_0,u_1,u_2$ are 
   bounded in $B_r$. We also know that $u_0(x)\in C^3(\Dc).$ 
   After differentiating in $\vs$ the equality $\Lc(\vs)G(x,y;\vs)=0$, 
   $x\in B_r,$ we see that 
    $u_1(x)$  is  a solution of the equation $\Lc u_1=f_1$, with 
    $f_1(x)=-(\partial_\vs \Lc)u_0$. Thus, $f_1(x)\in 
    C^{1,\g}(B_{r/2}),$ moreover, $f_1(x)=0$ on $\partial \Bc.$
    We apply the Schauder estimate to the equation $\Lc u_1=f_1$, 
    which gives an estimate for the $x$-derivatives of $u_1,$   
    \begin{equation}\label{AD9}
      \pmb{|}u_1\pmb{|}_{3+\g,B_{\theta r}}\le C 
      \left(\|u_1\|_{L_{\infty}(B_r)}+\pmb{|}f_1\pmb{|}_{1+\g,B_r}\right).
    \end{equation}
   The inequality \eqref{AD9} shows that the derivatives in $x$, 
   up to the third order, are bounded in $B_{\theta r}$, in fact, they 
   are order 2 H\"older better than $f_1$.
   
   In a similar way, we consider the function $u_2(x)$. It is a 
   solution of the equation
   
   \begin{equation}\label{AD10}
     \Lc(\vs) u_2(x)=f_2(x)\equiv 
     -[(\partial_{\vs\vs}^2\Lc(\vs)]u_0-2[\partial_\vs\Lc(\vs)]u_1.
   \end{equation}
   We have just proved that $u_1\in C^{2+k+\g}(B_{\theta r})$; so, by our 
   assumptions on $\Lc$, we have $f_2\in C^{k+\g}(B_{\theta^2 r}) $.  Again, 
   applying Schauder's estimates, we obtain an inequality, similar to \eqref{AD9}, for 
   second derivatives of $u_2$.
   
   In order to handle  derivatives involving the differentiation in $y$, we just repeat the reasoning above for the function $v_0(y)=G(x,y;\vs)$ for a fixed $x\in B_r$ near the centerpoint $\mathbb{0}$ as a function of  $y\in U_{(\e)}=B_\e(y^{\circ})\cap B.$ This function is a solution of the equation $\Lc({y,\partial_y;\vs})v_0(y)=0,$ and after differentiation in $\vs,$ we arrive at the equation  $\Lc{(y,\partial_y;\vs)}\partial_{\vs}v_0=-\left[\partial_{\vs}\Lc\right]v_0\equiv g_1. $ Now we can apply the boundary Schauder estimate, since $\partial_{\vs}v_0=0$ on the boundary, to obtain the estimate for $v_1=\partial_{\vs}v_0$,
      
   \begin{equation}\label{N1}
  \pmb{|}v_1\pmb{|}_{2+\g,U_{(\e/2)}}\le C(\|v_1\|_{L_\infty(U_\e) }+  \pmb{|}g_1\pmb{|}_{\g,U_\e}).
   \end{equation}
The function $g_1$ on the right-hand side   in \eqref{N1} is bounded, uniformly in $x\in B_{r}(\mathbb{0}),$ moreover, has bounded derivatives in $x$, by Theorem \ref{Kras.thm}, since $x,y$ are separated. This gives the required estimate for $v_1$.

 Finally, we consider the mixed derivative, 
   $G_{\vs xy}(x,y;\vs)=\partial_x\partial_y\partial_\vs G(x,y;\vs)$ of the Green 
   function, for $x\in B_{r}$ and $y\in U_\e$
   
   We apply again Schauder estimates, this time in the following setting.
 The function $v_2(x)=\partial_y G(x,y,\vs)$ satisfies 
   $\Lc(x,\partial_x;\vs) v_2=0$. We differentiate the latter equality in $\vs$ and for $v_3=\partial_{\vs}v_2$ obtain the equation
     \begin{equation}\label{N2}
   \Lc(x,\partial_x;\vs)_x v_3= g_2=-[\partial_{\vs}\Lc(x,\partial_x;\vs)] 
   \end{equation}
  The right-hand side in \eqref{N2} belongs to $C^{k-2+\g}$ uniformly in $y$ variable in $U_\e$, therefore, $v_3$ belongs to $C^{k+\g}$ in $x$ variable  which implies the required estimate
   \begin{equation*}
    G_{\vs xy}(x,y;\vs)\le C.
  \end{equation*}
In a similar way, we can establish this kind of estimates for higher order derivatives in $x,y$,
provided the coefficients are sufficiently smooth.
 
   \end{proof}

 \end{document}